\newcommand{\bp}{\begin{proof}}
\newcommand{\ep}{\end{proof}}
\newcommand{\proofendsign}{$\Box$} 
\newenvironment{proof}{{\noindent \bf Proof }}{{\hspace*{\fill}\proofendsign\par\bigskip}}
\newcommand{\qed}{\hfill\Box}
\newcommand{\bi}{\bigskip}
\newcommand{\sm}{\smallskip}
\newcommand{\wh}{\widehat}
\newcommand{\wt}{\widetilde}
\newcommand{\ee}{\end{equation}}
\newcommand{\eea}{\end{eqnarray}}
\newcommand{\bean}{\begin{eqnarray*}}
\newcommand{\eean}{\end{eqnarray*}}
\newcommand{\suml}{\sum\limits}
\newif\ifpcte
\newcommand{\noi}{\noindent}
\newcommand{\ve}{\varepsilon}
  \newtheorem{theorem}{Theorem}
  \newtheorem{definition}{Definition}[section]
  \newtheorem{remark}[definition]{Remark}
  \newtheorem{cond}[definition]{Condition}
  \newtheorem{proposition}[definition]{Proposition}
  \newtheorem{lemma}[definition]{Lemma}
  \newtheorem{cor}[definition]{Corollary}
  \newcommand{\beCond}[2]{\Rand{\vspace{0,6cm}\tt #1}\begin{cond}[#2]
     \label{#1}}  \newtheorem{asss}{Set of Assumptions}
\newtheorem{xx}{\bf xxx}
\newcommand{\xxx}[1]{\begin{xx}{\bf #1}\end{xx}}
\newcommand{\CC}{\mathcal{C}}
\newcommand{\CD}{\mathcal{D}}
\newcommand{\E}{\mathbb{E}}
\newcommand{\R}{\mathbb{R}}
\newcommand{\Z}{\mathbb{Z}}
\newcommand{\N}{\mathbb{N}}
\newcommand{\CA}{\mathcal{A}}
\newcommand{\CB}{\mathcal{B}}
\newcommand{\CE}{\mathcal{E}}
\newcommand{\CF}{\mathcal{F}}
\newcommand{\CL}{\mathcal{L}}
\newcommand{\be}[1]{\begin{equation}\label{#1}}
\newcommand{\bew}[1]{\begin{equation*}\label{#1}}
\newcommand{\bea}[1]{\begin{eqnarray}\label{#1}}
\newcommand{\beL}[2]{\begin{lemma}[#2]\label{#1}}
\newcommand{\beD}[2]{\begin{definition}[#2]\label{#1}}
\newcommand{\beT}[2]{\begin{theorem}[#2]\label{#1}}
\newcommand{\beP}[2]{\begin{proposition}[#2]\label{#1}}
\newcommand{\beC}[2]{\begin{cor}[#2]\label{#1}}
\renewcommand{\Pr}{\mathrm{P}}
\newcommand{\Ex}{\mathrm{E}}
\newcommand{\1}{\mathbf{1}}
\renewcommand{\d}{\mathrm{d}}
\newcommand{\La}{\Longrightarrow}
\newcommand{\Ltoo}{{_{\displaystyle\longrightarrow\atop L\to\infty}}}
\newcommand{\Lto}{{_{\displaystyle\Longrightarrow\atop L\to\infty}}}
\newcommand{\tto}{{_{\displaystyle\longrightarrow\atop t\to\infty}}}
\newcommand{\tio}{{_{\displaystyle\longrightarrow\atop i\to\infty}}}
\newcommand{\Tvenull}{{_{\displaystyle\Longrightarrow\atop\ve\to 0}}}
\numberwithin{equation}{section}
\newcommand{\qad}{$\hfill \square$}
\begin{document}

\title{Multi-type spatial branching models for local self-regulation I: Construction and an exponential duality}

\thispagestyle{empty}

\author{Andreas~Greven, Anja~Sturm, Anita~Winter, Iljana~Z\"ahle}

\date{\today}

\maketitle

%
%
%

\begin{abstract}
We consider a spatial multi-type branching model in which individuals
migrate in geographic space according to random walks and reproduce according to a
state-dependent branching mechanism which
can be sub-, super- or critical depending on the local intensity of
individuals of the different types.
The model is a Lotka-Volterra type model with a spatial component and
is related to two models studied in \cite{BlathEtheridgeMeredith2007}
as well as to earlier work in \cite{Etheridge2004} and in
\cite{NeuhauserPacala1999}.
Our main focus is on the diffusion limit of small
mass, locally many individuals and rapid reproduction.
This system differs from spatial critical branching systems since it is
not density preserving and the densities for large times do not depend on the
initial distribution but mainly on the carrying capacities.

We prove existence of the infinite particle model and the system
of interacting diffusions as solutions of martingale problems or systems of stochastic equations.
In the exchangeable case in which
the parameters are not type dependent we show uniqueness of the solutions.
For that purpose we establish a new exponential duality.

\end{abstract}

\vspace{0.5cm}

\noi
{\bf 2000 Mathematics Subject Classification:} Primary: 60K35 

\vspace{0.5cm}
\noi
{\bf Key words and phrases:} spatial multi-type branching model, spatial
logistic branching model, Fleming-Viot models with selection,
Lotka-Volterra model, self-regulation,  duality

{\tiny
\tableofcontents
}


\section{Introduction}
\setcounter{equation}{0}
In this paper we consider interacting stochastic processes
indexed by $\Z^d$  or some countable Abelian group which are obtained as the high density
fast branching limit
of particle models in which particles are assigned a type
and  the number of particles of each type at a site is changing
due to branching and migration. While particles migrate independently
of each other, we will choose the branching mechanism
{\em state-dependent} with local self-regulation for the offspring distribution
reflecting competition among subpopulations such that the model obtained
may exhibit even in low dimensions
\begin{itemize}
\item stable populations, and
\item coexistence of different types.
\end{itemize}

For a few decades the spatial distribution of biological populations with
a geographic structure has been modeled by
the spatial {\it Dawson-Watanabe superprocess} or super random walk
in which infinitesimally small particles are supposed to migrate through
space and branch independently. However, it is well-known that
in the subcritical case we have local extinction, in the supercritical case
local explosion. In the critical case in
dimensions $d\le 2$ the interaction due to migration is not strong
enough to counteract local extinction due to branching.  Worse, at rare sites
where the system is not extinct, populations
grow without any bound, building up very high peaks.
This also holds for multi-type versions of these models
for which in addition in the critical case in $d=1,2$
{the high peaks are build up by mono-type populations.}

A way out could be to condition the population to stay constant in the
colonies. This results in a system of spatially {\it interacting
Fleming-Viot processes}. This is a process where  particles are assigned
a type from a possibly continuous type space and the local frequency of types changes due to a
resampling mechanism. Unfortunately, we still have to deal with two
competing mechanisms, here migration and resampling, which once more
yield a dichotomy to the effect that in dimensions $d\le 2$ the model
tends locally to mono-type configurations (see e.g.\ \cite{CoxGreven1994a})
and only in high dimension $d \geq 3$ allows for local coexistence of
different types.

The extinction and formation of mono-type clusters turn out to be a drawback since most
biological populations happen to live in two (or even one)
dimensions, while at the same time locally stable populations with local coexistence of
different types are observed.

To model the latter features we have to give up the martingale property of a single
colony arising from critical branching or resampling. The reason is that
we have to take into account that
{\em resources are limited}. Therefore the key task is to
extend the well-studied ``neutral'' models by new mechanisms inducing a
{\em self-regulation} of the population size and population type
decomposition under the constraints of bounded resources.

{In physics such a model is referred to as a model for
(local) self-organized criticality. So one can hope for such a model that
it does not exhibit the dichotomy of either locally unbounded growth or
local extinction and furthermore that coexistence of different types is possible.}

There have been previous attempts to introduce a self-regulative mechanism
on the level of one-type branching processes without a spatial structure
(compare \cite{JagersKlebaner2000}). Here, we will use a set-up
which includes a geographic structure of the population.
In this framework we will observe, depending on the set of parameters chosen,
various behaviors which are observable in nature,
namely local extinction versus non-extinction, or locally the coexistence of
different types versus the formation of mono-type clusters depending on the
parameters of the model.

A starting point for a spatial mono-type model with local self-regulation in one dimension
is the following {\it logistic stochastic partial differential equation} which arises in
the large local population limit combined with a spatial rescaling of $\Z$ to $\R$:
\be{mutri}
   \frac{\partial u}{\partial t}
 =
   \frac{1}{6}\frac{\partial^2 u}{\partial x^2}+(K-\theta u)u+\sqrt{u}\dot{W},
\ee
where $\dot{W}$ is white-noise. The function
$u(t,x)$ describes the density of a population in
colony $x\in\R$ at time $t\ge 0$.
In \cite{MuellerTribe1994} the existence of a critical
capacity $K_c$ is shown such that for $K<K_c$, $\tau^{\mathrm{ext}}:=\inf\{t\ge
  0:\,u(t,0)= 0\}<\infty$, while for $K>K_c$,
  $\tau^{\mathrm{ext}}=\infty$.
This continuous site model can not be extended to higher dimensions.
However, in \cite{BolkerPacala1997} and \cite{BolkerPacala1999}
a similar model is introduced for a discrete geographic space. Bolker and Pacala
propose by simulations that
equilibria exist even in low dimensions for certain values of the
parameters. The latter is proved in \cite{Etheridge2004}.

On the other hand predator-prey models like the classical
Lotka-Volterra model have been shown to be able to explain the
coexistence of species.
In \cite{NeuhauserPacala1999} the Lotka-Volterra model is studied with a very specific
spatial structure having properties in common with a model known in mathematics as the
voter model. In particular for certain parameters
coexistence of both types even in low dimensions, more precisely $d=2,$ are
established.

This work \cite{NeuhauserPacala1999} by Neuhauser and Pacala is also the starting point of two
more recent papers \cite{HW07} and \cite{CoxPerkins2005}. In
\cite{CoxPerkins2005},
it is shown that suitable rescaling in space and time of the density of one
population leads to a limit that is described by super-Brownian motion with a
drift. The form of the drift is related to coexistence and survival of a rare
type in the original Lotka-Volterra model in \cite{NeuhauserPacala1999}.


Whereas in these models exactly one individual inhabits a
site in the geographic state space the present paper will
continue the theory with a multi-type spatial branching system
in a geographic space consisting of
different colonies. This means that in our model the possible number of
individuals  in each colony is not restricted, even
though the colony has in some sense a {\it carrying capacity}
due to limited availability of resources at this location which are
necessary for the fecundity of the different types in a varying degree.

A technical point which makes this model harder to investigate than other
spatial multi-type branching models treated so far is that we face here
non-linearities in the drift terms. In particular,
the system is not density preserving. Moreover, in the long run
the density will be only dependent on the carrying capacities but not
very much on the initial state which is the key parameter in the
neutral models.
\smallskip

{{}The main goal of the present first part of the paper is to establish
the existence of the
particle and the diffusion models on infinite geographic space, and to give an
analytical characterisation as solutions of martingale problems and systems of stochastic equations.
In the particular case where the parameters do not depend on the particles' type, we establish a new exponential duality
which allows to prove uniqueness.

}


\section{{{}Construction and
characterization of the models}}
\label{s.multilogistic}
The model we shall study describes the masses of $M\in\N$ different types of individuals in a population distributed over
colonies in a geographic space $G$ with generally countably many-- thus possibly infinitely many -- components.

The diffusion systems studied here arise as the limit of suitable particle models. Due to the fact that we also consider a (countably)  infinite
geographic space the concepts of the state spaces as well as of solutions to martingale problems and respective SDEs
require some modifications compared to the case of a single or finitely many
components.  Therefore we begin in Subsection~\ref{ss.prelinfsp}
with an introduction and discussion of these concepts.  We then introduce the particle system
in Subsection~\ref{ss.underpartsys}, the diffusion limit in Subsection~\ref{ss.iabranchdiff} and
results on the approximation of the diffusion limit via particle systems in Subsection~\ref{ss.difflimit}.
Finally, in Subsection~\ref{ss.expdual} we present an exponential duality relation for a special exchangeable case of our model,
which allows us in particular to make stronger statements about convergence to and uniqueness of the limit diffusion.

\subsection{Preliminaries on systems in infinite geographic space}
\label{ss.prelinfsp}
In this subsection we discuss  the adaptions needed to define a Markov process with
countably infinitely many interacting components.  We assume that the location of colonies in geographic space is given by some {{}countable}
Abelian group $G$. 
Thus the processes considered here will have state spaces which are subsets of
$(\R_+^M)^G.$ 
Two problems arise here: (1) the components live in the unbounded
set $\R_+$ and (2) the geographic space is infinite. This combination makes it more difficult to establish the
existence and uniqueness of the stochastic processes in general and  in particular given the additional
complication in our model with a nonlinear interaction.

For every system with state space contained in
$(\R_+^M)^G$ with $|G|=\infty$, the problem arises to set up the
state space in such a way that the dynamics can be well-defined
and no influence from infinity occurs at specific sites
rendering the process unspecified.

To keep the process locally finite
we choose as the state space the Liggett-Spitzer space (first
introduced in \cite{LiggettSpitzer1981}).

Let {for a countable Abelian group $G$}, $a(\cdot,\cdot)$ be {{}a random walk kernel from $G$ to $G$, i.e.,}
\be{sg1}
   a(\eta,\xi)=a(0,\xi-\eta)\quad\mbox{ and }\sum_{\xi\in G}a(0,\xi)=1,
\ee
which we use later to model migration on $G$.

Next we choose a weight function $\rho$ as follows
\be{Z035}
    \rho(\xi):=\sum_{\eta\in G}\sum_{n=0}^\infty (R/2)^{-n}\hat{a}^{(n)}(\eta,\xi)\beta(\eta)
\ee
with $R>2$, $\beta(\eta)>0$ for all $\eta\in G$, $\sum_{\eta\in G}\beta(\eta)<\infty$ and
\begin{equation}
\label{e:hata}
   \hat{a}(\xi,\eta):=\frac12(a(\xi,\eta)+a(\eta,\xi)).
\end{equation}
Note that $\rho$ {{}is positive and summable and for $\eta\in G$,
\be{Gr3}
\begin{aligned}
    & \sum_{\xi\in G}\hat{a}\big(\xi,\eta\big)\rho(\xi) \le \tfrac{R}{2}\rho(\eta).
\end{aligned}
\ee}

As state space we consider
\be{initial}
   {{}{\mathcal E}^G}
 :=
   \big\{x\in (\R_+^M)^G:\,\sum_{\xi \in G}\rho(\xi)
   \bar{x}_\xi<\infty\big\},
\ee
{{}where
\be{Z001}
    \bar{x}_\xi:=\sum_{m=1}^M x_\xi^m,
\ee
and write $\|x\|$ for the {\em $\rho$-weighted $l^1$-norm}, i.e., for all $x \in (\R_+^M)^G,$
\be{e:rhoweight}
   \|x\|:= \sum_{\xi\in G}\rho(\xi)\bar x_\xi.
\ee
}
We equip the state space $\mathcal{E}$ with
the {\em product topology} of $(\R_+^M)^G$.

We do not choose the norm topology
since we cannot expect to find a solution with regular paths 
in the state space $\CE^G$ equipped with the norm topology.

Finally, as a state space for the approximating particle systems we consider  the subset
\begin{equation}
\label{LLS}
   {{}{\mathcal E}^{\mathrm{par},G}:=  {\mathcal E}^G} \cap (\N_0^M)^G.
\ee

In the following and throughout the paper we will denote by $\CB(E),$ respectively $B(E)$ the set of all measurable, respectively measurable and bounded real valued functions on  a topological space $E$. {{}We further denote by $C(E,F)$ and $C_b(E,F)$ the space of continuous and continuous bounded functions from a space $E$ to another space $F$. If $F:=\R$, we simply write $C(E)$ or $C_b(E)$. }
In the case of $E=\R_+$ we use $D(\R_+,F)$ for the Skorohod space of c\`adl\`ag functions with values in $F$.

In order to make precise what we mean by the solution to a martingale problem we formulate:
\begin{definition}[Martingale problem]
Let a state space $\CE$, a set $\CF\subset C_b(\CE,\R)$ and a linear operator $\Omega_X$ 
with domain including  $\CF$ be given. Furthermore, let $\nu$ be a distribution on $\CE.$
Then, solutions to the $(\Omega_X, \CF, \nu)$ martingale problem are processes
$X$ with paths in $D(\R_+,\CE)$ such that
\be{ag2}
\big(f (X(t)) - \int^t_0 (\Omega_X f)(X(s))\mathrm{d}s\big)_{t \geq 0} 
\ee
is a martingale for every $f \in \CF$ and $ \CL [X_0] = \nu$. {{\em Uniqueness} holds for the martingale problem if there is at most
 one $P\in D(\R_+,\CE)$ such that under $P$, (\ref{ag2}) is a martingale for all $f\in\CF.$ The martingale problem is {\em well-posed} if there exists exactly one such $P.$}
\label{D.mp}
\end{definition}

As we shall see later, uniqueness
for the martingale problem  with infinitely many components
can be verified a priori only in very particular cases,
for example, via duality relations in the exchangeable case.
In order to address the uniqueness problem we will therefore, in general cases only consider
solutions which allow an approximation by spatially finite
systems.

We shall use approximations by populations that live in   finite geographic spaces
\be{ag1}
   (G_L)_{L \in \N}, \quad G_L \uparrow G, \quad |G_L| < \infty.
\ee
For example $G_L =[-L,L]^d \cap \Z^d$
for $G=\Z^d$.

\begin{definition}[Approximation property]
A solution $X$ of the $(\Omega^G_X, \CF, \nu)$-martingale problem has the approximation property (with respect to $\{G_L,\,L\in\mathbb{N}\}$)
if
there exists $\{X^L,\,L\in\mathbb{N}\}$ of ${\mathcal E}^G$-valued strong Markov processes with
\be{ag3}
   \CL \big[(X^L(t))_{t \geq 0}\big] \Lto \CL \big[(X(t))_{t \geq 0}\big],
\ee
and for each $L\in\mathbb{N}$, $X^L$ solves an
 $(\Omega^L_X, \CF, \nu)$-martingale problem for an operator $\Omega^L$ such that $\Omega^Lf(x)=0$ whenever $f\in{\mathcal F}$ and the restriction of $f$ to $G_L$ is a constant function.
\label{D.approx}
\end{definition}

\subsection{The underlying particle system}
\label{ss.underpartsys}

In this subsection we  introduce the approximating particle systems. These systems will also give an intuitive meaning to
the parameters that are used in the description of self-regulating population models.

We consider particles (individuals) that are assigned a {\em location} in the geographic space $G$ and
a {\em type} $m\in\{1,\dots,M\}$. If not stated otherwise
we consider $M\ge 2$. These particles are \emph{migrating} in the space $G$ and they
are also \emph{branching} (reproducing) in an environment of limited resources.
\sm

The branching is state-dependent meaning that
due to bounded resources the mean number of offspring varies
as a function of the current state, although we will assume for simplicity that
the branching is binary. In order to describe the state dependence
we need parameters that quantify the carrying capacity
for type $m\in\{1,\dots,M\}$ and the influence of type $n$ on
type $m$.

For each $m\in\{1,\dots,M\}$ the carrying capacity, $K^m \in (0,\infty)$, of
a colony for the $m^{\mathrm{th}}$ type arise as follows:
Assume there are $J$ different resources of respective size
\be{am1}
   R^j,\quad j = 1,\dots,J,
\ee
which the individuals have to share. Abundance  and shortage of resources cause additional births and
deaths, respectively, which can be quantified by the
$m^{\mathrm{th}}$ type sensitivity $s_{j,m}$  to (abundance  and shortage
of) resource $j\in\{1,...,J\}$. Moreover, each resource may be
utilized by all types. If
$\wt \lambda_{j,n}$ denotes the amount which the $n^{\mathrm{th}}$
type uses the resource $j\in\{1,...,J\}$,
then given the population $z^m\in \N^G$ of type $m$
it is reasonable to choose the mean deviations from critical offspring
according to the penalty term
\be{penalty}
   \sum^J_{j=1} s_{j,m}\bigg(R^j-\sum^M_{n=1}
   \wt\lambda_{j,n}z^n\bigg).
\ee
We therefore define the \emph{carrying capacity} by
\be{ccap1}
   K^m
 :=
   \sum^J_{j=1}s_{j,m} R^j,
\ee
the \emph{competition matrix},
$(\lambda_{m,n})_{m,n\in\{1,\dots,M\}}$, by
\be{comp1}
   \lambda_{m,n}
 :=
   \sum^J_{j=1}s_{j,m}\wt\lambda_{j,n},
\ee
and the natural branching rates by
\begin{equation}
\label{brarate}
   \gamma^m.
\end{equation}

The dynamics of the particle system
{{}
\begin{equation}
\label{e:ZG}
   Z^G=\big\{(z^m_{\xi}(t))_{t\ge 0};\,m=1,...,M,\xi\in G\big\}
\end{equation}
}
is given by the following two \emph{independent mechanisms}:
\begin{itemize}
\item[-] {\it Migration. }
Each particle migrates in $G$ according to a continuous time
rate~1 random walk
with transition probabilities $a(\xi,\eta)$ from $\xi$ to $\eta$.
\item[-] {\it State dependent branching. }
{{}
For each $m=1,...,M$ and $\xi\in G$, given the current population
$z_\xi=(z^1_\xi,...,z^M_\xi)$ the following transitions occur:

\begin{itemize}
\item {\em Birth. } Each particle of type $m$ at site $\xi$ gives birth to a new particle at rate
\be{birth}
   \gamma^m\cdot\gamma^m_{\mathrm{birth}}(z_\xi)=\gamma^m\big(\tfrac{1}{2}+K^m\big).
\ee
\item {\em Death. }  Each particle of type $m$ at site $\xi$ dies at rate
\be{death}
   \gamma^m\cdot\gamma^m_{\mathrm{death}}(z_\xi)=\gamma^m\big(\tfrac{1}{2}+\sum_{n=1}^M\lambda_{m,n}z_\xi^m\big).
\ee
\end{itemize}
}
\end{itemize}

{{}Define the interaction function of type $m$ with the environment
and the other populations by setting for all $m\in\{1,...,M\}$, $\xi\in G$, and $y\in \R_+^M$,
\begin{equation}\label{P:eq.003}
   \Gamma^m(y_\xi):=\gamma^m_{\mathrm{birth}}(y_\xi)-\gamma^m_{\mathrm{death}}(y_\xi)
 =
   K^m-\sum_{n=1}^M \lambda_{m,n}y_\xi^n.
\end{equation}}

Some statements we can prove only in special cases. We therefore introduce the following
sets of assumptions.
\begin{asss}[Constant branching rate]
All types reproduce at the same rate, i.e.,
there is a constant $\gamma>0$ such that
\be{Z003}
   \gamma^m =
   \gamma,\quad\mbox{ for all }m\in\{1,...,M\}.
\ee
\label{a:a1}
\end{asss}

\begin{asss}[Type-non-sensitive resources and capacities]
Resources are not sensitive to different types, i.e.,
there is a constant $\lambda\ge 0$ such that
\be{AG2}
   \lambda_{m,n} =
   \lambda,\quad\mbox{ for all }m,n\in\{1,...,M\}.
\ee

\noindent All types
have the same carrying capacity, i.e., there is a constant $K\ge 0$
such that
\be{AG3}
   K^m =
   K,\quad\mbox{ for all }m\in\{1,...,M\}.
\ee
\label{a:a2}
\end{asss}

\begin{definition}[Exchangeable models]
If Assumptions~\ref{a:a1} and~\ref{a:a2} are satisfied,
then the corresponding class of models is called
the exchangeable model.
\label{d:exch}
\end{definition}

We want to characterize the particle model introduced above analytically as a solution of a system of stochastic equations and a martingale problem.
First consider the following system of stochastic equations: for all $m\in\{1,2,...,M\}$, $\xi\in G$, and $t>0$,
\be{e:poisson}
\begin{aligned}
   z^m_{\xi}(t)
 &=
   z^m_{\xi}(0) + \sum_{\eta\in G,\eta\ne\xi} \Big[ \int_{[0,t]\times\R_+}
   \1\big(z^m_{\eta}(s-)\ge u\big)N^{m,\xi}_{\eta}(\d s \, \d u)
  \\
 &\hspace{4cm}{}
  -\int_{[0,t]\times\R_+}\1\big(z^m_{\xi}(s-)\ge u\big)N^{m,\eta}_{\xi}(\d s \, \d u) \Big]\hspace{2em}
  \\
 &{}
   {{}+\int_{[0,t]\times\R_+}\1\big(z^m_{\xi}(s-)\gamma^m_{\mathrm{birth}}(z^m_{\xi}(s-)) \ge u \big)N^{m,+}_{\xi}(\d s \, \d u)}
  \\
 & {}
   -{{}\int_{[0,t]\times\R_+}\1\big(z^m_{\xi}(s-)\gamma^m_{\mathrm{death}}(z^m_{\xi}(s-)) \ge u\big)N^{m,-}_{\xi}(\d s \, \d u).}
    \end{aligned}
    \ee
Here $\{N^{m,\eta}_{\xi}:\xi,\eta\in G,\xi\ne\eta,\,1\le m\le M\}$ are independent
    Poisson processes on $[0,\infty) \times \R_+$ and
    $\{N^{m,+}_{\xi},N^{m,-}_{\xi}:\xi\in G,\,1\le m\le M\}$ are independent
    Poisson processes on {{}$[0,\infty) \times \R_+$}, all independent of
    {{}$Z(0)$}. $N^{m,\eta}_{\xi}$ has intensity measure $a(\xi, \eta) \,\d t \otimes \d u$,
    {{}$N^{m,+}_{\xi}$, $N^{m,-}_{\xi}$} have intensity measure
    {{}$\gamma^m\d t \otimes \d u$} ($\d t$, $\d u$ are Lebesgue measures).

Now we formulate the martingale problem. {{}Define for any countable Abelian group $G$ the domain}
\begin{equation}
\label{ag5}
   {{}\CD(\Omega^G_Z):= \big\{f\in B({\mathcal E}^{\mathrm{par},G}),\Omega^G f\in B({\mathcal E}^{\mathrm{par},G})\big\},}
\ee
where the action of the operator {{}$\Omega^G_{Z}$} acting on {{}$\CD(\Omega^G_Z)$} is given by
\begin{equation}
\label{pregenerator}
\begin{aligned}
   \Omega^G_Z f(z)
 &:=
   \sum_{m=1}^M\sum_{\xi,\eta\in G}z^m_\xi a(\xi,\eta)\big\{f\big(z+e(m,\eta)-e(m,\xi)\big)-f(z)\big\}
  \\
 &
   {{}+\sum_{m=1}^M\gamma^m\sum_{\xi\in G}\gamma^m_{\mathrm{birth}}\big(z_\xi\big)z^m_\xi\big\{f\big(z+e(m,\xi)\big)-f(z)\big\}}
  \\
 &\quad
   {{}+\sum_{m=1}^M\gamma^m\sum_{\xi\in G}\gamma^m_{\mathrm{death}}\big(z_\xi\big)z^m_\xi\Big\{f\big(z-e(m,\xi)\big)-f(z)\Big\},}
\end{aligned}
\end{equation}
and $e{(m,\xi)}\in(\N_0^M)^G$ is defined by $(e(m,\xi))((n,\eta)):=\delta_{(m,\xi),(n,\eta)}$.

{{}Notice that if $G'$ is finite and $z(0)\in{\mathcal E}^{{\rm par},G'}$ (recall from (\ref{LLS})), existence and uniqueness of the solution $Z^{G'}$
with values in $D(\R_+,{\mathcal E}^{\rm par},G')$ of
the $(\Omega_Z,{\mathcal D}(\Omega^{G'}_Z),z(0))$ martingale problem follow by standard theory on jump processes.
Compare also \cite{AtBassPer05} and \cite{AtBarBassPer02}.
Moreover, $Z^{G'}$ is also the unique solution of the system (\ref{e:poisson}) of stochastic equations. }

{{}
Moreover, if $G'\subset G$, $z(0)\in{\mathcal E}^{\mathrm{par},G}$ and $\tilde{Z}^{G'}$ is
a solution of the $(\Omega^{G'}_{Z}, {\mathcal D}(\Omega^{G'}_{Z}),z_{|G'}(0)))$-martingale problem, with the restricted initial state $z_{|G'}(0):=\{z^m_\xi(0);\,\xi\in G'\}$ then
$\tilde{Z}^{G'}$ can be extended to a  ${\mathcal E}^{\mathrm{par},G}$-valued process, $Z^{G',G}$, whose components $\{z^m_\xi,\,\xi\in G\setminus G'\}$ outside of $G'$  are {\em frozen}, i.e.
\begin{equation}
\label{e:ZGLG}
   (z^{G',G})^m_\xi(t):=\left\{\begin{array}{cc}\tilde{z}^m_\xi(t), & \mbox{ if }\xi\in G'\\
   z^m_\xi(0), & \mbox{ if }\xi\in G\setminus G'\end{array}\right.
\end{equation}
}\smallskip

Fix now a countable Abelian group $G$ and approximating finite Abelian groups $\{G_L;\,L\in\mathbb{N}\}$ as in (\ref{ag1}).

\begin{theorem}[Existence of the particle system] 
Fix $z(0)\in{\mathcal E}^{\mathrm{par},G}$. Let for each $L\in\mathbb{N}$, $Z^{G_L,G}$ be the extended unique solution of the
$(\Omega^{G_L}_{Z}, {\mathcal D}(\Omega^{G_L}_{Z}),z_{|G_L}(0))$-martingale problem. Then the following hold:
\begin{itemize}
\item[(i)] The family of processes $\{Z^{G_L,G};\,L\in\mathbb{N}\}$ is tight in $D(\R_+,{\mathcal E}^{\mathrm{par},G})$
for each initial condition $z(0)\in{\mathcal E}^{\mathrm{par},G}$.
\item[(ii)] Any limit point $Z^{G}$ of $\{Z^{G_L,G};\,L\in\mathbb{N}\}$  is  a solution of the
$(\Omega^G_{Z}, {\mathcal D}(\Omega^G_{Z}), z(0))$-martingale problem and a weak solution of (\ref{e:poisson}).
\item[(iii)]
The laws of {(subsequences of)} $\{Z^{G_L,G};\,L\in\mathbb{N}\}$ converge in $D(\R_+,{\mathcal E}^{\mathrm{par},G})$, i.e., there are only limit points $Z^G$
of $\{Z^{G_L,G};\,L\in\mathbb{N}\}$ in $D(\R_+,{\mathcal E}^{\mathrm{par},G})$.
\end{itemize}
\label{T:parti}
\end{theorem}

We prove this theorem in Section \ref{Sub:parti}.


\subsection{A system of interacting branching diffusions}
\label{ss.iabranchdiff}
In this section we introduce the candidate for the continuous mass limit.
Put for any {{}countable Abelian group $G$},
\be{sg2}
   \bar a (\xi,\eta):= a(\eta,\xi), \quad \xi, \eta \in G.
\ee

We consider
the following system of differential equations:
for all $m\in\{1,...,M\}$, and $\xi\in G$,
\begin{equation}
\label{P:eq.007}
\begin{aligned}
   \d x_\xi^m(t)
 &=
   \sum_{\eta\in G} \bar a(\xi,\eta)\big(x^m_\eta(t)-x^m_\xi(t)\big)
   \d t + \gamma^m x^m_\xi(t)\Gamma^m \big(x_\xi(t)\big)\d t \\
 &\quad+
   \sqrt{\gamma^m x_\xi^m(t)}\,\d w_\xi^m(t),
\end{aligned}
\end{equation}
where $(w_\xi^m)_{\xi\in G, m\in\{1,\dots,M\}}$ is a family of independent
standard Brownian motions.

{{}For all $k\in\mathbb{N}_0$, denote by
\begin{equation}
\label{e:Ck}
  C^k(\mathcal{E}^G)
  :=
  \big\{f\in C({\mathcal E}^G):\,f\mbox{ is $k$-times continuously differentiable}\big\}.
\ee
Put $C_b^k(\mathcal{E}^G):=C^k(\mathcal{E}) \cap B(\CE^G)$,
the space of  bounded functions with derivatives up to
the $k$-th order.
{{}Consider}
\be{P:eq.008}
\begin{aligned}
    \Omega^G_X f(x)
  &=
    \sum_{m=1}^{M} \sum_{\xi\in G} \big(\sum_{\eta\in G}
    \bar a(\xi,\eta)\big(x_\eta^m-x_\xi^m\big) + \gamma^m
    x^m_\xi\Gamma^m(x_\xi)\big)\tfrac{\partial f}{\partial x^m_\xi}(x)
   \\
  &+ \tfrac12\sum_{m=1}^{M}\gamma^m\sum_{\xi\in G}x_\xi^m
      \tfrac{\partial^2 f}{(\partial x_\xi^m)^2}(x).
\end{aligned}
\ee
{acting on
\be{e:domainOmegaX}
\begin{aligned}
   {\mathcal D}(\Omega^G_X)
 &:=
   \big\{f\in C_b^3({\mathcal E}^G), \Omega^G_Xf\in B({\mathcal E}^G)
      \big\}.
\end{aligned}
\ee
}


{{}Once more, if $X^{G'}$ solves the $(\Omega^{G'}_X,{\mathcal D}(\Omega^{G'}_X),x'(0))$-martingale problem for some $G'\subset G$, we consider it as an ${\mathcal E}^G$-valued process starting in $x(0)$ such that $(x^{G'})^m_\xi(0)=x^m_\xi(0)$ for all $\xi\in G'$ and $m=1,...,M$, and freezing the dynamics outside of $G'$.}

{{}\begin{remark}[Separating class] Note that the family
\begin{equation}
\label{e:separate}
\begin{aligned}
   {\mathcal H}
   &:=\big\{f_{\mu,\kappa}\big(x\big):=e^{-\sum_{\xi\in G}\mu_\xi\bar{x}_{\xi}}\prod_{\xi\in G}\prod_{m=1}^{M}(x^{m}_{\xi})^{\kappa^{m}_{\xi}};\,\mu\in[0,\infty)^G,\kappa\in\mathbb{N}^G\mbox{ with }\sum_{\xi}\kappa_\xi<\infty,
   \\
   &\hspace{1cm} \mu_\xi>0\mbox{ if }\kappa^m_\xi>0\mbox{ for some }m\in\{1,...,M\},
   \\
   &\hspace{1cm}\mu_\xi>0\mbox{ only for finitely many }\xi\in G
   \big\}
\end{aligned}
\end{equation}
is a subset of ${\mathcal D}(\Omega^G_X)$ which {\em separates points}.
\label{Rem:003}
\end{remark}}

The next theorem states existence and uniqueness of the solution of (\ref{P:eq.007}).
\begin{theorem}[Interacting diffusion well-defined]
Let $G$ and $\{G^L;\,L\in\mathbb{N}\}$ as in (\ref{ag1}),
$X(0)\in\mathcal{E}^G$, and
$\Omega^G_X:{\mathcal D}(\Omega_X)\to C_b(\mathcal{E}^G)$ be as in
(\ref{P:eq.008}).
\begin{itemize}
\item[(i)]
The $(\Omega^G_X,{\mathcal D}(\Omega^G_X),X(0))$-martingale problem has a solution
\begin{equation}
\label{e:XG}
   X^G=\big\{(x^m_{\xi}(t))_{t\ge 0};\,\xi\in G,m=1,...,M\big\}
\end{equation}
with paths in $C(\R_+,\CE^G)$ {that arises as the subsequential limit of the laws of the finite approximations $X^{G_L}$.}

If we are in the exchangeable case, the process $X$ has a unique solution and this solution
is the limit of the finite approximations.
\item[(ii)] Any solution of the system of SDEs given in (\ref{P:eq.007}) solves the $(\Omega^G_X,{\mathcal D}(\Omega^G_X),X(0))$-martingale problem.
\item[(iii)]
Any solution to the $(\Omega^G_X,{\mathcal D}(\Omega^G_X),X(0))$-martingale problem is a weak solution to the
system of SDEs given in (\ref{P:eq.007}).
\end{itemize}
\label{T0}
\end{theorem}

\begin{cor} The system of SDEs given in (\ref{P:eq.007}) {{}has a
weak solution} and this solution has continuous sample paths.
In the exchangeable case it is a strong Markov and Feller process.
\end{cor}

The proof of Theorem~\ref{T0} can be found in Section~\ref{subsection:Tdiff}. The proof uses a diffusion approximation which is the subject of the next section.

\smallskip

If we make stronger assumptions on the initial states we can say more about the states
of the process $X$ later on, and in fact about a {\em proper state space}
of $X$. So let for $p\in[1,\infty)$,
{{}
\begin{equation}
\label{e:lp}
   l^p(\rho)={\mathcal E}^{G}_p:=\big\{x\in (\R^M_+)^G:\,\|x\|_p<\infty\big\},
\end{equation}
where
\begin{equation}
\label{normrhop}
   \|x\|_{p}:=\|x\|_{p,\rho}= \big(\sum_{\xi \in G}\big(\bar{x}_{\xi}\big)^p \rho(\xi)\big)^{\frac{1}{p}}.
\end{equation}
}

Concerning a state space for the multi-type logistic branching diffusion $X$ we have the following result.
\begin{proposition}[State space]
Let $X$ be a solution of (\ref{P:eq.007}) with initial condition $x(0) \in l^p(\rho)$ for some $p \geq 2$.
Then the paths of $X$ lie in the space $l^p(\rho)$, almost surely. In fact, for every $T\geq 0$ we have the bound
\begin{equation}
\label{l^pbound}
   E\big[\sup_{0 \leq t \leq T}\|x(t)\|_{p}\big] < \infty.
\end{equation}
\label{propn:l^pbound}
\end{proposition}

The proof of Proposition~\ref{propn:l^pbound}
is given in Subsection~\ref{ss.momentcalc}.


\subsection{The diffusion limit}
\label{ss.difflimit}
Here we show that the continuous mass population model, $X^G$, can be indeed approximated
by the particle system $Z^G$ using a {\em small
mass, many particles} and {\em rapid reproduction} limit.

For that, consider a family $\big\{Z^{G,\varepsilon};\,\varepsilon>0\}$ with
\begin{equation}
   Z^{G,\varepsilon}=\big\{(z^m_{\varepsilon,\xi}(t))_{t\ge 0},\,m\in\{1,...,M\},\xi\in G\big\}
\label{A0}
\ee
where $Z^{G,\varepsilon}:=\varepsilon\cdot { {}^{\epsilon}Z^{G}}$
and {${}^{\epsilon}Z^G$} is a solution of the $(\Omega^{G}_{Z},{\mathcal D}(\Omega^{G}_{Z}))$-martingale problem in which we rescaled the parameters {{}$\gamma^m\mapsto\frac{\gamma^m}{\varepsilon}$, $\gamma^m_{\mathrm{birth}}\mapsto\tfrac{\gamma^m}{\varepsilon}(\tfrac{1}{2}+\varepsilon K^m)$, $\gamma^m_{\mathrm{death}}(z_\xi)\mapsto\tfrac{\gamma^m}{\varepsilon}(\tfrac{1}{2}+\varepsilon \sum_{n=1}^M\lambda_{m,n}z^n_\xi)$,
(and thus $\Gamma^m\mapsto\varepsilon\Gamma^m$).} That is, in the
$\varepsilon$-approximation the
particles have mass $\varepsilon$, the initial number of particles
is blown up by a factor of $\varepsilon^{-1}$, and the branching is speeded
up by the factor $\varepsilon^{-1}$ and in addition we replace $\Gamma^m$ by $\ve\Gamma^m$ (limit of small perturbation of criticality of the branching). As a consequence $Z^{G,\varepsilon}$ solves the  $(\Omega^{G}_{Z^\varepsilon},{\mathcal D}(\Omega^{G}_{Z^\varepsilon}))$-martingale problem
where
\be{gen3}
\begin{aligned}
    \Omega^{G}_{Z^\varepsilon}f(z)
 &=
    \sum_{m=1}^M\sum_{\xi\in G} \tfrac{z_{\varepsilon,\xi}^m}{\ve} \big\{\sum_{\eta\in G}
    a(\xi,\eta) \big(f(z+{\ve}e(m,\eta)-{\ve}e(m,\xi))-f(z)\big)
    \\
    & \phantom{AAAAAAAAA} + \tfrac{\gamma^m}{\varepsilon}{{}\big(\tfrac{1}{2}+\varepsilon K^m\big)}
    \big(f(z+\vee(m,\xi))-f(z)\big)
    \\
    & \phantom{AAAAAAAAA} +  \tfrac{\gamma^m}{\varepsilon} {{}\big(\tfrac{1}{2}+\varepsilon \sum_{n=1}^M\lambda_{m,n}z^n_\xi\big)}
    \big(f(z-\vee(m,\xi))-f(z)\big)\big\}.
\end{aligned}
\ee

{{}
\begin{theorem}[The diffusion limit]
Let $X(0)$  in ${\mathcal E}^{G}$ be random such that $\Ex[X(0)]\in{\mathcal E}^G$.   Define $Z^\varepsilon(0)$ by letting for all $\varepsilon>0$, $\xi\in G$ and $m\in\{1,2,...,M\}$, $z_{\varepsilon,\xi}^m(0):=\varepsilon\lfloor \tfrac{1}{\varepsilon}x_{\xi}^m(0)\rfloor$, and start all $Z^{G,\varepsilon}$ in $Z^\varepsilon(0)$.
 Then the family $\{Z^{G,\varepsilon};\,\varepsilon>0\}$ is relatively compact and any limit point $X$ satisfies the $(\Omega^G_X,{\mathcal D}(\Omega^G_X),X(0))$-martingale problem.

If the model is exchangeable, the  $(\Omega^G_X,{\mathcal D}(\Omega^G_X),X(0))$-martingale problem has a unique solution, $X$, and
\be{e:difflimit}
   \CL\big[Z^{\varepsilon}\big] \Tvenull \CL \big[X\big],
\ee
where here $\Rightarrow$ means weak convergence in
$D(\R_+,\CE)$ with respect to the Skorohod topology.
\label{Tdiff}
\end{theorem}
}

We shall give the fairly standard proof of Theorem~\ref{Tdiff} in Section~\ref{subsection:Tdiff}.

\subsection{Exponential duality  in the exchangeable model}
\label{ss.expdual}
In this section we focus on the exchangeable models only.
In order to verify uniqueness of the martingale problem of the interacting diffusion process, we shall use the following duality.

Let $X \in {\mathcal E}$
be the weak solution of (\ref{P:eq.007}) in the exchangeable case, i.e.\ the solution of the following system of stochastic
differential equations:
\be{e:self1a}
\begin{aligned}
   &\d x_{\xi}^{m}(t)
   \\
 &=
   \sum_{\eta\in G} \bar a(\xi,\eta)\big(x_\eta^m(t)-x_{\xi}^{m}(t)\big)\d t
   +\gamma x_{\xi}^{m}(t)\big(K-\lambda \bar{x}_{\xi}(t)\big)\d t
   +\sqrt{\gamma x_{\xi}^{m}(t)}\,\d w_{\xi}^{m}(t),
\end{aligned}
\ee
where $\lambda>0$, $K>0$, and the Brownian
motions $\{w^{m}_{\xi};\,m\in\{1,\dots,M\},\xi\in G\}$ are independent.

As we will see this process is dual to the Markov process with state space
\be{Z048}
    \CE^{\rm dual}:=\bigg\{(\alpha,\kappa)\in(\R_+)^G\times(\N_0^M)^G:\sum_{\xi\in G}\alpha_\xi\bar{x}_\xi<\infty, \sum_{\xi\in G}\bar\kappa_\xi\bar{x}_\xi<\infty, \;\forall x\in\CE\bigg\}
\ee
{where
\begin{equation}
\label{e:barkap}
   \bar{\kappa}_{\xi}:=\sum_{m=1}^M\kappa^{m}_{\xi},
\ee}
and with the generator acting on 
{{}
\be{e:thedual3}
\begin{aligned}
    &{\mathcal D}(\Omega_{(\alpha,\kappa)})
    \\
  &:=\big\{f\in B(\CE^{\rm dual}):\,f(\boldsymbol{\cdot},\kappa)\in{\mathcal D}(\Omega_{X}),\,\forall\kappa\in(\N^M_0)^G,f(\alpha,\boldsymbol{\cdot})\in B(\N_0^G),\,\forall\alpha\in{\mathcal E}^G\big\}
\end{aligned}
\ee
}
of the form
\be{e:thedual2}
\begin{aligned}
   &\Omega_{(\alpha,\kappa)}f\big(\alpha,\kappa\big)
  \\
 &:=
   \sum_{m=1}^M\sum_{\xi,\eta\in G} \kappa^{m}_{\xi} \bar a(\xi,\eta)
   \Big(f\big(\alpha,\kappa+e(m,\eta)-e(m,\xi)\big)-f(\alpha,\kappa)\Big)
  \\
 &\qquad+
 \gamma\sum_{m=1}^M\sum_{\xi\in G}{\kappa^{m}_{\xi}\choose 2}
 \Big(f(\alpha,\kappa-e(m,\xi))-f(\alpha,\kappa)\Big)
  \\
 &\qquad+
   \sum_{\xi,\eta\in G} a (\xi,\eta)\big(\alpha_\eta-\alpha_{\xi}\big)\frac{\partial}
   {\partial \alpha_{\xi}}f\big(\alpha,\kappa\big)
   \\
 &\qquad+
   \gamma\sum_{\xi\in G}\alpha_{\xi} \Big(K-\frac12\alpha_{\xi}\Big)\frac{\partial}
   {\partial
\alpha_{\xi}}f\big(\alpha,\kappa\big)
  +\gamma\lambda\sum_{\xi\in G}\alpha_{\xi}\frac{\partial^2}
   {\partial (\alpha_{\xi})^2}f\big(\alpha,\kappa\big),
 \\
 &\qquad+
   \gamma\lambda\sum_{\xi\in G}\bar{\kappa}_{\xi}\frac{\partial}
   {\partial
\alpha_{\xi}}f\big(\alpha,\kappa\big).
\end{aligned}
\ee

This means that the process
$\kappa$ describes an autonomous spatial Kingman coalescent in which particles migrate independently on $G$ according to $a$ and particles at the same site coalesce according to
a Kingman coalescent mechanism with rate $\gamma.$
The process $\alpha$ follows a diffusion of a form analogous to that of
the total mass process $\bar{x}$ with the exception that we have an
additional nonnegative immigration term for $\alpha_{\xi}$ which is given by
$\gamma\lambda\bar{\kappa}_{\xi}.$ This implies that the process
$\alpha$ depends on $\kappa$ whenever $\kappa$ is nonzero.
In this case, $\alpha$ cannot die out completely as the following lemma states.

\begin{lemma}
Let $\alpha(0) \in l^2(\rho)$, $p\ge 2$, and $\kappa(0) \in (\N_0^M)^G$ with $\sum_{\xi \in G} \bar{\kappa}_{\xi} < \infty.$ Then there exists a unique solution for the \label{L.mart}
$(\Omega_{(\alpha,\kappa)},{\mathcal D}(\Omega_{(\alpha,\kappa)}),(\alpha(0),\kappa(0)))$-martingale problem on
$D([0,\infty), (\R_+)^G \times (\N_0^M)^G)$.
Furthermore we have continuous paths with exception of finitely many
jump points in finite time intervals.
\end{lemma}

It turns out that $\CE^{\rm dual}$ can be chosen as a state space for the dual process started a.s.\ in a configuration from $\CE_f^{\rm dual}$ where
\be{Z049}
    \CE_f^{\rm dual}:=\Big\{(\alpha,\kappa)\in(\R_+)^G\times(\N_0^M)^G:(\alpha,\kappa) \mbox{ {has finite support}}\Big\}.
\ee
This is justified by the following lemma whose proof can be found in Section \ref{P:exdual}.

\begin{lemma} \label{ZL1}
    For $(\alpha(0),\kappa(0))\in\CE_f^{\rm dual}$, $(\alpha(t),\kappa(t))\in\CE^{\rm dual}$ a.s.\ for all $t>0$.
\end{lemma}
Consider the duality function
\be{e:self2a}
    H\big((\alpha,\kappa),x\big) :=
   \exp\bigg(-\sum_{\xi\in
 G}\alpha_\xi\bar{x}_{\xi}\bigg) x^{\kappa},
\ee
for $\alpha\in(\R_+)^G$, $x\in((\R_+)^M)^G$ and $\kappa\in(\N_0^M)^G$, where $x^{\kappa}:=\prod_{\xi\in G}\prod_{m=1}^{M}(x^{m}_{\xi})^{\kappa^{m}_{\xi}}$. We then show the following:
\begin{proposition}[A duality for the exchangeable case] \label{L:dual}
    Let $(\alpha(t), \kappa(t))$ be a Markov process with generator $\Omega_{(\alpha,\kappa)}$ defined in (\ref{e:thedual2}) with  $(\alpha(0),\kappa(0))\in\CE_f^{\rm dual}$ such that
    \be{e:thedualinit}
        \Ex\bigg[ \Big(\sum_{\xi \in G} \alpha_\xi(0)\Big)^3\bigg]<\infty\; \mbox{ and }\; n=\sum_{\xi \in G} \bar{\kappa}_{\xi}(0)< \infty.
    \ee
    Let $X$ be a solution to (\ref{e:self1a}), that is independent of $(\alpha, \kappa),$ started from $\bar{X}(0)$ which is bounded above by     a translation invariant $\bar{X}^{inv}$ with  $\Ex[(\bar{x}_{\xi}^{inv})^{n+2}]<\infty.$  Then, for all $t\geq 0,$
    \be{e:thedual}
    \begin{aligned}
       & \Ex\big[H\big((\alpha(0),\kappa(0)),x(t)\big)\big]\\
       & = \Ex\bigg[H\big((\alpha(t),\kappa(t)),x(0)\big) \exp\!\bigg(\!\gamma\!\int^t_0 \bigg\{\sum_{m,\xi}{\kappa^{m}_{\xi}(s)\choose 2}+\sum_{\xi}\big(K-\alpha_{\xi}(s)\big)\bar{\kappa}_{\xi}(s)\bigg\}\d s\bigg)\bigg]
    \end{aligned}
    \ee
\end{proposition}
The proof can be found in Section \ref{P:exdual}. {We will present some applications in Section~\ref{s:twotypecoex}.}

\subsection{Exponential duality and coexistence}
\label{s.dual}

We will now use the duality in order to investigate conditions for
coexistence. In order to prove that there is long-term coexistence we would like to show that for some
$\xi \in G,$
\begin{equation}
\label{s:twotypecoex}
\liminf_{t \rightarrow \infty} \Ex\Big[ x^1_{\xi}(t) \cdot  x^2_{\xi}(t) \Big]>0.
\end{equation}

We see from the duality function that it suffices to consider $\alpha (0) =0$ and $\kappa(0)$ to be the
configuration with a type 1 and a type 2 particle at $\xi$ in order to then show that
\be{AG3d}
\liminf_{t \to \infty} E [H((\alpha(0), x(0)), x(t))] >0.
\ee

From the duality we obtain the following monotonicity property for coexistence in the initial condition.
\begin{proposition}
Let $X^{\theta}$ be started from a constant initial state, namely $x_{\xi}^m(0)=\theta>0$ for all $\xi \in G, m=1,\dots, M.$ If $0<\tilde{\theta}\leq \theta$ and coexistence as in (\ref{s:twotypecoex}) holds for $X^{\theta}$ then it also holds for $X^{\tilde{\theta}}.$ Vice versa, if coexistence 
does not hold for $X^{\theta}$ then it also does not hold for $X^{\tilde{\theta}}$ for any $\tilde{\theta}>\theta.$
\end{proposition}
\begin{proof}
By the duality of Proposition \ref{L:dual} we have that
\begin{equation}
\label{e:byduality}
\begin{aligned}
   &\Ex\big[x^{\tilde{\theta},1}_{\xi}(t) \cdot  x^{\tilde{\theta},2}_{\xi}(t) ]
  \\
 &=
   \Ex\bigg[ \exp(-\tilde{\theta} \bar{\alpha}(t)) \tilde{\theta}^2 \exp\!\bigg(\!\gamma\!\int^t_0 \bigg\{\sum_{\xi}\big(K-\alpha_{\xi}(s)\big)\bar{\kappa}_{\xi}(s)\bigg\}\d s\bigg)\bigg]
  \\
 &\geq
   \frac{\tilde{\theta}^2 }{ \theta^2 }\Ex\bigg[ \exp(-\theta \bar{\alpha}(t)) \theta^2 \exp\!\bigg(\!\gamma\!\int^t_0 \bigg\{\sum_{\xi}\big(K-\alpha_{\xi}(s)\big)\bar{\kappa}_{\xi}(s)\bigg\}\d s\bigg)\bigg]
  \\
 &=
   \frac{\tilde{\theta}^2 }{ \theta^2 }\Ex\big[x^{\theta,1}_{\xi}(t) \cdot  x^{\theta,2}_{\xi}(t) ].
\end{aligned}
\ee
      Taking $\liminf_{t \rightarrow \infty}$ on both sides now implies the statement.
\end{proof}

The following lemma may be helpful to establish coexistence results in forthcoming work.
\begin{lemma}[Total dual mass diverges]
\label{L:alphatoinfinity}
Suppose that the parameters are such that the total mass process $\bar{X}$ started in a translation invariant nontrivial initial condition survives. Then for any $(\alpha, \kappa)$ process with generator (\ref{e:thedual2}) such that $\alpha(0)$ has finite support and $\sum_{\xi \in G} \bar{\kappa}_\xi(0)\geq1$
we obtain
\begin{equation}
\label{e:alphatoinfinity}
    \Pr\bigg(\sum_{\xi \in G} \alpha_\xi(t) \mathop{\longrightarrow}\limits_{t\to\infty} \infty\bigg)=1.
\end{equation}
\end{lemma}

\begin{proof}
Let $\bar{X}$ be started from the constant configuration $\bar{x}_\xi(0)=1$ for all $\xi \in G.$
We let $(\alpha^0,\kappa^0)$ be the process with generator (\ref{e:thedual2}) started from $(\alpha(0),0)$. Then by duality, for $t \geq 0,$
\begin{equation}
\label{s:dualalpha0}
\Ex\bigg[ \exp\bigg( - \sum_{\xi \in G} \alpha_{\xi}^0(t) \bigg)\bigg]  =\Ex\bigg[ \exp\bigg( -\sum_{\xi \in G}\alpha_{\xi}^0(0)  \bar{x}_\xi(t)\bigg) \bigg].
\end{equation}
By Lemma 8.1 of \cite{HW07} we have
\begin{equation}
\label{s:extgrowth}
    \Pr\Bigg(  \sum_{\xi \in G} \alpha^0_\xi(t) \mathop{\longrightarrow}\limits_{t\to\infty} \infty\quad \text{or}\quad \exists t'< \infty: \sum_{\xi \in G} \alpha^0_\xi(t)=0 \, \,\forall t\geq t'\Bigg)=1.
\end{equation}
On the other hand, by Theorem~5 of \cite{HW07} we have
$\bar{X}(t) \Rightarrow \bar{X}(\infty)$ for $t \rightarrow \infty,$ where $\bar{X}(\infty)$
is translation invariant and also nontrivial by our assumption of survival.
 Thus, letting $t \rightarrow \infty$ in (\ref{s:dualalpha0}) we obtain with
(\ref{s:extgrowth})
\be{Z050}
\begin{aligned}
    & 1-\Pr\bigg(  \sum_{\xi \in G} \alpha^0_\xi(t) \mathop{\longrightarrow}\limits_{t\to\infty} \infty \bigg)\\
    &= \Pr\bigg(\exists t'< \infty: \sum_{\xi \in G} \alpha^0_\xi(t)=0 \,\, \forall t\geq t'\bigg) =\Ex\bigg[ \exp\bigg( -\sum_{\xi \in G}\alpha_{\xi}(0)  \bar{x}_\xi(\infty)\bigg) \bigg].
\end{aligned}
\ee
This implies that for any $\varepsilon>0,$
\begin{equation}
\label{s:growthlowbound0}
\inf_{\alpha(0)\text{ s.t.\ } \exists \xi: \alpha_{\xi}(0)\geq  \varepsilon}
\Pr\bigg(  \sum_{\xi \in G} \alpha^0_\xi(t) \mathop{\longrightarrow}\limits_{t\to\infty} \infty \bigg)\geq 1-\Ex\big[e^{ -\varepsilon \bar{x}_{0}(\infty)}\big]>0.
\end{equation}
Note that we may apply Proposition \ref{monotonicity} to $\alpha$ as the migration and drift terms fulfill
the assumptions. Thus, due to the monotonicity in the immigration term stated there  we obtain also for $(\alpha, \kappa)$ with any $\kappa(0)$ that
\begin{equation}
\label{s:growthlowbound}
\inf_{\alpha(0)\text{ s.t.\ }\exists \xi : \alpha_{\xi}(0)\ge \varepsilon}
\Pr\bigg(  \sum_{\xi \in G} \alpha_\xi(t) \mathop{\longrightarrow}\limits_{t\to\infty} \infty \bigg)>0.
\end{equation}
It now remains to show that there exists an $\varepsilon>0$ such that for any $(\alpha, \kappa)$ process
\begin{equation}
\label{s:epsgrowth}
\inf_{(\alpha(0), \kappa(0))\text{ s.t.\ }  \sum_{\xi \in G} \bar{\kappa}_\xi(0)\geq1}
\Pr\big( \exists t< \infty, \,\xi \in G: \alpha_\xi(t)\geq \varepsilon\big)>0.
\end{equation}
Note that if $\sum_{\xi \in G} \bar{\kappa}_\xi(0)\geq1$ then there exists $\zeta \in G$ such that
$\bar{\kappa}_{\zeta}(t)\geq 1$ at least for $t \leq T \sim \exp(1).$ Let $0<\delta<\gamma \lambda$ be arbitrary.
Then there exists an $\varepsilon>0$ such that for any $\alpha \in \R_+^{\Z^d}$ with $0 \leq \alpha_\zeta\leq \varepsilon,$
\begin{equation}
\label{s:driftbound}
\sum_{\eta \in G} \bar{a}(\zeta,\eta) (\alpha_\eta - \alpha_\zeta)  + \gamma \alpha_\zeta \Big(K-\frac{1}{2} \alpha_\zeta\Big)+\gamma \lambda \geq \gamma \lambda-\delta>0.
\end{equation}
Since the left hand side of (\ref{s:driftbound}) is a lower bound for the drift term of $\alpha_{\zeta}$ up to time $T\wedge S,$ where $S=\inf\{ t\geq 0: \alpha_{\zeta}(t) \geq \varepsilon\}$ this implies that according to Proposition \ref{monotonicity} we can couple $\alpha_\zeta$ to $\tilde{\alpha}$ which solves
\begin{equation}
\label{s:tildealphaxi}
\d \tilde{\alpha}(t) = ( \gamma \lambda-\delta )\,\d t
+ \sqrt{\gamma \lambda \tilde{\alpha}(t) }\,\d w_t
\end{equation}
such that $\tilde{\alpha}(0)= \alpha_{\zeta}(0)$ and $\tilde{\alpha}(t)\leq \alpha_{\zeta}(t)$ for $t \leq T \wedge S.$  Setting also
$\tilde{S}=\inf\{ t\geq 0: \tilde{\alpha}(t) \geq \varepsilon\}$  it follows immediately that $S\leq \tilde{S} < \infty$ a.s. and in particular that
\be{Z051}
\inf_{(\alpha(0), \kappa(0))\text{ s.t.\ }  \sum_{\xi \in G} \bar{\kappa}_\xi(0)\geq1}
\Pr\big( \exists t< \infty, \,\xi \in G: \alpha_\xi(t)\geq \varepsilon\big)\geq \Pr\big(\tilde{S}\leq T\big)>0,
\ee
which shows (\ref{s:epsgrowth}). Taking (\ref{s:growthlowbound}) and  (\ref{s:epsgrowth}) together with the Markov property of $(\alpha, \kappa)$ now implies that for all $a \in \R,$
\begin{equation}
\inf_{(\alpha(0), \kappa(0))\text{ s.t.\ }  \sum_{\xi \in G} \bar{\kappa}_\xi(0)\geq1}
\Pr\bigg(\exists t'< \infty: \sum_{\xi \in G} \alpha_\xi(t)\geq a \, \forall t\geq t'\bigg)=:\varepsilon>0.
\end{equation}
Therefore, for initial conditions with
$\sum_{\xi \in G} \bar{\kappa}_\xi(0)\geq1$ and so $\sum_{\xi \in G} \bar{\kappa}_\xi(s)\geq1$ for all
$s\geq 0,$ we obtain  by  the Markov property and martingale convergence that
\be{Z052}
\begin{aligned}
    \varepsilon
    &\leq \Pr\bigg(\exists t'< \infty: \sum_{\xi \in G} \alpha_\xi(t)\geq a \, \forall t\geq t'\Big|\big(\alpha(s), \kappa(s)\big)\bigg)\\
    &= \Pr\bigg(\exists t'< \infty:  \sum_{\xi \in G} \alpha_\xi(t)\geq a \, \forall t\geq t'\Big| {\cal F}_{s}\bigg)\\
    &\rightarrow 1_{\{\exists t'< \infty  \text{ s.t. }  \sum_{\xi \in G} \alpha_\xi(t)\geq a \, \forall t\geq t'\}} \quad \text{a.s.}
\end{aligned}
\ee
as $s \rightarrow \infty.$ This implies the result.
\end{proof}

}

\section{Proofs of Theorem~\ref{T:parti},~\ref{T0} and~\ref{Tdiff}}
\label{s.proofcc}


{{}Fix $G$ and $\{G_L;\,L\in\mathbb{N}\}$ as in (\ref{ag1}).
Recall $Z^{G_L,G,\varepsilon}$ and $X$ from (\ref{e:ZGLG}), (\ref{gen3}) and (\ref{P:eq.007}).
In this section we give the proofs of  Theorems~\ref{T:parti},~\ref{T0} and~\ref{Tdiff}.
In particular, we verify weak convergence of $Z^{G_L,G,\varepsilon}$  along a subsequence as
first $L\to\infty$ and then $\varepsilon\to 0$, and first $\varepsilon\to 0$ and then $L\to\infty$,
and show that in both rescaling regimes possible limit points agree.

In Subsection~\ref{Sub:parti} we give bounds on the first moments of the supremum of a component of $Z^{G_L,G,\varepsilon}$
which are uniform in $L\in\mathbb{N}$ and $\varepsilon>0$. We apply them with $\varepsilon=1$ to verify tightness of the family indexed by $L\in\mathbb{N}$. In Subsection~\ref{subsection:Tdiff} we first verify tightness of a family of  rescaled limit points
indexed by $\varepsilon>0$.  We also show that any limit point is a weak solution of (\ref{P:eq.007}).
}

\subsection{First moment bounds and proof of Theorem~\ref{T:parti}}
\label{Sub:parti}
The main goal of this subsection is to prove  Theorem~\ref{T:parti}.
{{}Recall $\{G_L;\,L\in\mathbb{N}\}$ and $G$ from (\ref{ag1}), the spaces ${\mathcal E}^{\mathrm{par},G}$ from (\ref{LLS}), the unique  solution $Z^{G_L}$ of the $(\Omega_Z^{G_L},{\mathcal D}(\Omega_Z^{G_L}))$-martingale problem from (\ref{ag5}), (\ref{pregenerator}), and (\ref{e:ZG}), as well as its extension to a $(\R_+^M)^G$-valued process,  $Z^{G_L,G}$, obtained by freezing all components outside of $G_L$.

We start by showing the tightness claimed in (i) of Theorem~\ref{T:parti}. 
Here, we want to apply Lemma~4.5.1 combined with Remark~4.5.2 in \cite{EthierKurtz1986}.
We therefore need to verify the compact containment condition and uniform convergence of generators. As a preparation we verify  moment bounds. We proceed in three steps.}\smallskip

\noindent
{\em Step~1 (Uniform first moment bounds) }
Fix $\varepsilon>0$, and recall from (\ref{A0}) the re-scaled particle process
\begin{equation}
\label{e:resca}
   Z^{G_L,G,\varepsilon}=\big\{z^m_{\varepsilon,L,\xi};\,m\in\{1,...,M\},\xi\in G\big\}
\end{equation}
obtained by assigning particles individual mass $\varepsilon$, blowing up the initial number of particles by a factor $\varepsilon^{-1}$, speeding up the branching rate by a factor $\varepsilon^{-1}$, and letting $\Gamma^m:=\varepsilon\Gamma^m$.

The following applies to $Z^{G_L,G}$ if we let $\varepsilon=1$, but will be applied with a general $\varepsilon>0$ in the next section.

\begin{lemma}[First moment bounds]
Let $Z(0)$ be a random element in ${\mathcal E}^{\mathrm{par},G}$ such that $\sum_{\xi\in G}\rho(\xi)\Ex[\bar{z}_{\xi}(0)]<\infty$.   Define $Z^\varepsilon(0)$ by letting for all $\varepsilon>0$, $\xi\in G$ and $m\in\{1,2,...,M\}$, $z_{\varepsilon,\xi}^m(0):=\varepsilon\lfloor \tfrac{1}{\varepsilon}z_{\xi}^m(0)\rfloor$, and start all $Z^{G_L,G,\varepsilon}$ in $Z^\varepsilon(0)$.
Then the following hold:
    \begin{itemize}
    \item[(i)]
    For for each $1\le m\le M$, $T>0$ and $\varepsilon>0$, there is a constant $C(T,\varepsilon)$ such that
 \begin{equation}
    \label{ee:moment4-11}
    \begin{aligned}
       \sup_{L\in\N}\sum_{\xi\in
        G}\rho(\xi)\E \big[\sup_{0 \leq t \leq T}(z_{\varepsilon,L,\xi}^{m}(t))\big]
      \leq C(T,\varepsilon)
        \sum_{\xi\in G}\rho(\xi)\Ex\big[\bar{z}_{\xi}(0)\big].
    \end{aligned}
  \end{equation}
 \item[(ii)]
For each $1\le m\le M$ and $T>0$, there is a constant $C(T)$ such that
 \begin{equation}
    \label{ee:moment4-1}
    \begin{aligned}
      \sup_{\varepsilon>0}  \sup_{L\in\N}\sum_{\xi\in
        G}\rho(\xi)\E \big[\sup_{0 \leq t \leq T}(z_{\varepsilon,L,\xi}^{m}(t))\big]
      \leq C(T)
        \sum_{\xi\in G}\rho(\xi)\Ex\big[\big(z_{\xi}^m(0)\big)^2\big].
    \end{aligned}
  \end{equation}
  \end{itemize}
  \label{L:moments}
\end{lemma}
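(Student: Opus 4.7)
The strategy is to use the Doob--Meyer decomposition furnished by the martingale problem, combined with Doob's $L^2$ inequality and the Liggett--Spitzer bound (\ref{Gr3}). The key observation is that although the per--particle birth and death rates $\tfrac{\gamma^m}{\varepsilon}(\tfrac12+\varepsilon K^m)$ and $\tfrac{\gamma^m}{\varepsilon}(\tfrac12+\varepsilon\sum_n\lambda_{m,n}z^n_\xi)$ each diverge like $1/\varepsilon$, they combine into the $\varepsilon$-independent drift $\gamma^m z^m_\xi(K^m-\sum_n\lambda_{m,n}z^n_\xi)$, which is bounded above by $\gamma^m K^m z^m_\xi$ since the competition term is nonpositive.

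The first step is to apply $\Omega^{G_L}_{Z^\varepsilon}$ from (\ref{gen3}) to the coordinate projection $z\mapsto z^m_\xi$, obtaining for $\xi\in G_L$ the bound
$$\Omega^{G_L}_{Z^\varepsilon}\mathrm{pr}^m_\xi(z)\;\le\;\sum_{\eta\in G_L}a(\eta,\xi)z^m_\eta+\gamma^m K^m z^m_\xi,$$
the components outside $G_L$ being frozen. Taking expectations yields an integral inequality for $u^m_\xi(t):=\Ex[z^m_{\varepsilon,L,\xi}(t)]$; summing against $\rho$ and using $\sum_\xi\rho(\xi)a(\eta,\xi)\le R\rho(\eta)$ (which follows from $a\le 2\hat a$ together with (\ref{Gr3})), Gronwall's lemma gives
$$\sup_{L}\sup_{\varepsilon>0}\sup_{s\le T}\sum_{\xi\in G}\rho(\xi)u^m_\xi(s)\;\le\;e^{(R-1+\gamma^m K^m)T}\sum_{\xi\in G}\rho(\xi)\Ex[z^m_\xi(0)].$$
Then to bring the supremum inside, I use the Doob--Meyer decomposition $z^m_{\varepsilon,L,\xi}(t)=z^m_{\varepsilon,L,\xi}(0)+\int_0^t\Omega^{G_L}_{Z^\varepsilon}\mathrm{pr}^m_\xi(z^{\varepsilon,L}(s))\,ds+M^m_{\varepsilon,L,\xi}(t)$ and discard the nonpositive migration--out and competition terms to bound $\sup_{t\le T}z^m_{\varepsilon,L,\xi}(t)$ by $z^m_{\varepsilon,L,\xi}(0)+\int_0^T[\sum_\eta a(\eta,\xi)z^m_{\varepsilon,L,\eta}(s)+\gamma^m K^m z^m_{\varepsilon,L,\xi}(s)]\,ds+\sup_{t\le T}|M^m_{\varepsilon,L,\xi}(t)|$. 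Taking expectations, weighting by $\rho(\xi)$ and summing, the deterministic part is controlled by the first step, while Doob's $L^2$ inequality combined with Cauchy--Schwarz $\sum_\xi\rho(\xi)\sqrt{b_\xi}\le(\sum_\xi\rho(\xi))^{1/2}(\sum_\xi\rho(\xi)b_\xi)^{1/2}$ (using summability of $\rho$) reduces the task to bounding $\sum_\xi\rho(\xi)\Ex[\langle M^m_{\varepsilon,L,\xi}\rangle_T]$.

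A direct computation shows that the predictable quadratic variation splits into three linear--in--$z$ terms $\gamma^m z^m_\xi(s)+\varepsilon z^m_\xi(s)+\varepsilon\sum_\eta a(\eta,\xi)z^m_\eta(s)$ and a quadratic--in--$z$ term $\varepsilon\gamma^m(K^m+\sum_n\lambda_{m,n}z^n_\xi(s))z^m_\xi(s)$. The linear terms are controlled uniformly in $\varepsilon$ by the first step, so the main obstacle is the quadratic term. For part (i), with fixed $\varepsilon>0$, this is absorbed into $C(T,\varepsilon)$ by dominating the process (in the sense of first moments) by spatial pure--birth branching at the $\varepsilon$--dependent rate $\gamma^m(\tfrac12+\varepsilon K^m)/\varepsilon$, whose second moments are finite and grow exponentially in $T$ from $\Ex[\bar z_\xi(0)]$ by standard branching process calculations. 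For part (ii), I instead establish a uniform--in--$\varepsilon$ $\rho$-weighted second--moment estimate $\sum_\xi\rho(\xi)\Ex[(z^m_{\varepsilon,L,\xi}(t))^2]\le e^{C_2T}\sum_\xi\rho(\xi)\Ex[(z^m_\xi(0))^2]$ by repeating the Gronwall argument with the function $z\mapsto(z^m_\xi)^2$; here the bound $z^m_\xi\sum_\eta a(\eta,\xi)z^m_\eta\le\tfrac12(z^m_\xi)^2+\tfrac12\sum_\eta a(\eta,\xi)(z^m_\eta)^2$ from Cauchy--Schwarz in the probability measure $a(\cdot,\xi)$ (which sums to $1$ by translation invariance) keeps the equation linear in $(z^m)^2$, and the cubic correction $-2\gamma^m\lambda_{m,n}(z^m_\xi)^2z^n_\xi$ is nonpositive and can be discarded. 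Feeding this bound back into the martingale estimate, using $\varepsilon\Ex[z^n_\xi z^m_\xi]\le\varepsilon\Ex[(z^m_\xi)^2+(z^n_\xi)^2]/2$, produces the uniform--in--$\varepsilon$ bound (\ref{ee:moment4-1}).
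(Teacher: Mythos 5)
Your part (ii) is essentially the paper's argument: a Gronwall bound for the $\rho$-weighted first and second moments obtained by applying the generator to $z\mapsto z^{m}_{\xi}$ and $z\mapsto (z^{m}_{\xi})^2$ (discarding the nonpositive competition and cubic terms), followed by the compensated birth--death martingale, a maximal inequality, and the observation that the $\varepsilon$-weighted quadratic part of the bracket is controlled uniformly in $\varepsilon$ by the second-moment estimate. The only cosmetic difference is that the paper uses Burkholder--Davis--Gundy together with $\sqrt{x}\le 1+x$ where you use Doob's $L^2$ inequality and Cauchy--Schwarz over sites.

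Part (i), however, has a genuine gap. Under the hypothesis of (i) only $\sum_{\xi}\rho(\xi)\Ex[\bar{z}_{\xi}(0)]<\infty$ is assumed, and the asserted bound is in terms of first moments of the initial data alone. Your route forces you through $\Ex[\langle M^{m}_{\varepsilon,L,\xi}\rangle_T]$, which contains the term $\varepsilon\gamma^m\int_0^T\Ex\big[z^m_{\varepsilon,L,\xi}(s)\sum_n\lambda_{m,n}z^n_{\varepsilon,L,\xi}(s)\big]\,\d s$ --- a genuine second moment of the process. The claim that the second moments of the dominating pure-birth system ``grow exponentially in $T$ from $\Ex[\bar{z}_\xi(0)]$'' is false: for a branching process started from a random population $Z_0$, $\Ex[Z_t^2]$ contains a term proportional to $\Ex[Z_0^2]$, so it is infinite whenever the initial condition has no second moment, which is permitted under (i). Doob's inequality is then vacuous and the martingale route cannot yield a bound of the form $C(T,\varepsilon)\sum_{\xi}\rho(\xi)\Ex[\bar{z}_{\xi}(0)]$; at best it proves (i) under an additional second-moment hypothesis, i.e.\ a weaker statement. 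The paper avoids this entirely: for (i) it bounds $\sup_{t\le T}z^m_{\varepsilon,L,\xi}(t)$ pathwise by the monotone increasing part of the Poisson representation (\ref{e:007}) --- initial mass plus all in-migration jumps plus all \emph{uncompensated} birth jumps --- whose expectation is
$\Ex[z^m_\xi(0)]+\int_0^T\sum_{\eta}a(\xi,\eta)\Ex[z^m_{\varepsilon,L,\eta}(s)]\,\d s+\tfrac{\gamma^m}{\varepsilon}\int_0^T\Ex[z^m_{\varepsilon,L,\xi}(s)]\,\d s$.
This requires only the first-moment Gronwall bound, at the price of the factor $\varepsilon^{-1}$, which is exactly why the constant in (i) is allowed to depend on $\varepsilon$. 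To rescue your framework you would have to replace the branching-domination step by something that genuinely uses only first moments, e.g.\ the crude monotone bound above, or a delicate absorption argument based on $\Ex[\langle M\rangle_T^{1/2}]\le \sqrt{\varepsilon\gamma\lambda T}\,\Ex[\sup_{s\le T}\bar{z}_{\varepsilon,L,\xi}(s)]+\dots$; as written, the step fails.
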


\begin{proof} 
~
~
{{}Applying the generator $\Omega^{G_L}_Z$ (recall from (\ref{gen3})) to $f^p_{m_0,\xi_0}(z)=(z^{m_0}_{\xi_0})^p$, $p\in\{1,2\}$, yields
for all $\varepsilon>0$ and for $\xi_0 \in G_L$, and $m_0\in\{1,...,M\}$,
\begin{equation}
\label{e:moment0p}
\begin{aligned}
   &\Omega^{G_L,\varepsilon}_Zf^p_{m_0,\xi_0}(z)
   \\
   &=\sum_{m=1}^M\sum_{\xi\in G} \tfrac{z_{\varepsilon,L,\xi}^m}{\ve} \big\{\sum_{\eta\in G}
    a(\xi,\eta) \big((z^{m_0}_{\varepsilon,L,\xi_0}+{\ve}e(m,\eta)(m_0,\xi_0)-{\ve}e(m,\xi)(m_0,\xi_0))^p-
    (z^{m_0}_{\varepsilon,L,\xi_0})^p\big)
    \\
    & \phantom{AAAAAAAAA} + \tfrac{\gamma^m}{\varepsilon}{{}\big(\tfrac{1}{2}+\varepsilon K^m\big)}
    \big((z^{m_0}_{\varepsilon,L,\xi_0}+{\ve}e(m,\xi)(m_0,\xi_0))^p-(z^{m_0}_{\varepsilon,L,\xi_0})^p\big)
    \\
    & \phantom{AAAAAAAAA} +  \tfrac{\gamma^m}{\varepsilon} {{}\big(\tfrac{1}{2}+\varepsilon \sum_{n=1}^M\lambda_{m,n}z^n_{\varepsilon,L,\xi}\big)}
    \big((z^{m_0}_{\varepsilon,L,\xi_0}-{\ve}e(m,\xi)(m_0,\xi_0))^p-(z^{m_0}_{\varepsilon,L,\xi_0})^p\big)\big\}.
  \end{aligned}
\end{equation}

Thus
 \begin{equation}
\label{e:moment0p1}
\begin{aligned}
   &\Omega^{G_L,\varepsilon}_Zf^1_{m_0,\xi_0}(z)
   \\
   &=
   \sum_{m=1}^M\sum_{\xi\in G} z_{\varepsilon,L,\xi}^m \big\{\sum_{\eta\in G}
    a(\xi,\eta) \big(e(m,\eta)(m_0,\xi_0)-e(m,\xi)(m_0,\xi_0)\big)
    \\
    & \phantom{AAAAAAAAA} + \gamma^m{{}\Gamma^m(z_{\varepsilon,L,\xi})}e(m,\xi)(m_0,\xi_0)\big\}
   \\
   &=\sum_{\eta\in G_L}z^{m_0}_{\varepsilon,L,\eta}\big(\bar{a}(\xi_0,\eta)-\delta(\xi_0,\eta)\big)
    +\gamma^{m_0}\Gamma^{m_0}\big(z_{\varepsilon,L,\xi_0}\big)\cdot z^{m_0}_{\varepsilon,L,\xi_0}
    \\
    &\le
    \sum_{\eta\in G_L}z^{m_0}_{\varepsilon,L,\eta}\big(\bar{a}(\xi_0,\eta)-\delta(\xi_0,\eta)\big)
    +\gamma^{m_0}K^{m_0}\cdot z^{m_0}_{\varepsilon,L,\xi_0}
 \end{aligned}
\end{equation}
and
\begin{equation}
\label{e:moment0p2}
\begin{aligned}
   &\Omega^{G_L,\varepsilon}_Zf^2_{m_0,\xi_0}(z)
   \\
   &=\sum_{m=1}^M\sum_{\xi\in G} z_{\varepsilon,L,\xi}^m \Big\{\sum_{\eta\in G}
    a(\xi,\eta)\big(e(m,\eta)(m_0,\xi_0)-e(m,\xi)(m_0,\xi_0)\big) 2(z^{m_0}_{\varepsilon,L,\xi_0})
    \\
    & \phantom{AAAAAAAAA} +\varepsilon\sum_{\eta\in G}
    a(\xi,\eta)\big(e(m,\eta)(m_0,\xi_0)-e(m,\xi)(m_0,\xi_0)\big)^2\Big\}
    \\
    & \;+2\gamma^{m_0}{{}\Gamma^{m_0}\big(z_{\varepsilon,L,\xi_0}\big)}\big(z^{m_0}_{\varepsilon,L,\xi_0}\big)^2
    +  \gamma^{m_0}{{}\big(1+\varepsilon K^{m_0}+\varepsilon \sum_{n=1}^M\lambda_{m_0,n}z^n_{\varepsilon,L,\xi_0}\big)}
    z^{m_0}_{\varepsilon,L,\xi_0}
    \\
    &=2\sum_{\eta\in G_L}\big(z^{m_0}_{\varepsilon,L,\eta}\big)\big(z^{m_0}_{\varepsilon,L,\xi_0}\big)\big(\bar{a}(\xi_0,\eta)-\delta(\xi_0,\eta)\big)
    +2\gamma^{m_0}\Gamma^{m_0}\big(z_{\varepsilon,L,\xi_0}\big)\cdot \big(z^{m_0}_{\varepsilon,L,\xi_0}\big)^2
    \\
    &\;+\varepsilon\sum_{m=1}^M\sum_{\xi\in G} z_{\varepsilon,L,\xi}^m \sum_{\eta\in G}
    a(\xi,\eta)\big(e(m,\eta)(m_0,\xi_0)-e(m,\xi)(m_0,\xi_0)\big)^2
    \\
    &\;+ \gamma^{m_0}{{}\big(1+\varepsilon K^{m_0}+\varepsilon \sum_{n=1}^M\lambda_{m_0,n}z^n_{\varepsilon,L,\xi_0}\big)}
    z^{m_0}_{\varepsilon,L,\xi_0}.
  \end{aligned}
\end{equation}

Put
\begin{equation}
\label{overline}
   \overline{\gamma}:=\max_{1\le m\le M}\gamma^m;\;\overline{K}:=\max_{1\le m\le M}K^m;\;\overline{\lambda}:=\max_{1\le m,n\le M}\lambda_{m,n}.
\end{equation}

    Then
    \begin{equation}
\label{e:moment1p1}
\begin{aligned}
   \Omega^{G_L,\varepsilon}_Zf^1_{m_0,\xi_0}(z)
 &\le
    \sum_{\eta\in G_L}z^{m_0}_{\varepsilon,L,\eta}\big(\bar{a}(\xi_0,\eta)-\delta(\xi_0,\eta)\big)
    +\overline{\gamma}\overline{K}\cdot z^{m_0}_{\varepsilon,L,\xi_0}
 \end{aligned}
\end{equation}
and
\begin{equation}
\label{e:moment1p2}
\begin{aligned}
   &\Omega^{G_L,\varepsilon}_Zf^2_{m_0,\xi_0}(z)
   \\
    &\le \sum_{\eta\in G_L}\big(z^{m_0}_{\varepsilon,L,\eta}\big)^2\big(\bar{a}(\xi_0,\eta)-\delta(\xi_0,\eta)\big)
    +2\overline{\gamma}\overline{K}\big(z^{m_0}_{\varepsilon,L,\xi_0}\big)^2
    \\
    &\;+\varepsilon z_{\varepsilon,L,\xi_0}^{m_0}+2\varepsilon\sum_{\xi\in G}\hat a(\xi_0,\xi) z_{\varepsilon,L,\xi}^{m_0}
    +\overline{\gamma}{{}\big(1+\varepsilon \overline{K}+\varepsilon \overline{\lambda} \bar{z}_{\varepsilon,L,\xi_0}\big)}
    z^{m_0}_{\varepsilon,L,\xi_0}.
  \end{aligned}
\end{equation}

{{}Notice that neither $f^p_{m_0,\xi_0}$ nor $\Omega^{G_L,\varepsilon}_Zf^p_{m_0,\xi_0}$ are bounded functions.  However, we can make all the coming arguments work by replacing $f^p_{m_0,\xi_0}$ by
$\tilde{f}^p_{m_0,\xi_0}:={f}^p_{m_0,\xi_0}\cdot e^{-\mu z_{\xi_0}^{m_0}}$ and then use monotone convergence (as $\mu\downarrow 0$). Such calculations are quite involved but standard, so we omit them here and rather work here directly with $f^p_{m_0,\xi_0}$.}

}

Thus  for $\xi_0\in G_L$, and $m_0\in\{1,...,M\}$,
\begin{equation}
\label{e:moment}
\begin{aligned}
   \tfrac{\d}{\d t}\Ex\big[z_{\varepsilon,L,\xi_0}^{m_0}(t)\big]
 &\le
   \sum_{\eta\in G_L}\Ex\big[z^{m_0}_{\varepsilon,L,\eta}(t)\big]\big(\bar a(\xi_0,\eta)-\delta(\xi_0,\eta)\big)
   +
  {{}\overline{\gamma}\overline{K}\Ex\big[z^{m_0}_{\varepsilon,L,\xi_0}(t)\big],}
\end{aligned}
\end{equation}
while $\frac{\d}{\d t}\Ex[z_{\varepsilon,L,\xi_0}^{m_0}(t)]=0$
for $\xi_0 \in G\setminus G_L$.

Put
\begin{equation}\label{eq.gamma}
   C
 :=
   \overline{\gamma}\overline{K}<\infty.
\end{equation}
Then for $\xi_0\in G_L$,
\begin{equation}
\label{e:moment2}
\begin{aligned}
 \frac{\d}{\d t}\Ex[z_{\varepsilon,L,\xi_0}^{m_0}(t)]
&<
   \sum_{\eta\in G_L}\Ex\big[z^{m_0}_{\varepsilon,L,\eta}(t)\big]\big(\bar a(\xi_0,\eta)-\delta(\xi_0,\eta)\big)
   +
  \big(1+C\big)\Ex\big[z^{m_0}_{\varepsilon,L,\xi_0}(t)\big],
\end{aligned}
\end{equation}
 and the right hand side of (\ref{e:moment2}) is non-negative for all $\xi_0\in G$.  Consequently,  for all $\xi_0\in G_L$,
\begin{equation}
\label{e:moment3}
\begin{aligned}
   \Ex\big[z_{\varepsilon,L,\xi_0}^{m_0}(t)\big]
 &\le
   e^{(C+1) t}\sum_{\eta\in G}a_t(\xi_0,\eta)\Ex\big[z^{m_0}_{\varepsilon,L,\eta}(0)\big].
\end{aligned}
\end{equation}

In particular, by (\ref{Gr3}), for all $L\in\N$,
\begin{equation}
\label{e:moment4}
\begin{aligned}
   \sum_{\xi\in G}\rho(\xi)\Ex\big[z_{\varepsilon,L,\xi}^{m}(t)\big]
 &\le
   e^{(C+1) t}\sum_{\xi\in G}\rho(\xi)\Ex\big[z^{m}_{\varepsilon,L,\xi}(0)\big] = e^{(C+1) t}\sum_{\xi\in G}\rho(\xi)\Ex\big[z^{m}_{\xi}(0)\big].
\end{aligned}
\end{equation}

Moreover,
for $\xi_0\in G_L$, and $m_0\in\{1,...,M\}$,
\begin{equation}
\label{e:moment2}
\begin{aligned}
 &\frac{\d}{\d t}\Ex\big[\big(\bar{z}_{\varepsilon,L,\xi_0}(t)\big)^2\big]
 \\
&\le
   \sum_{\eta\in G_L}\Ex\big[\big(\bar{z}_{\varepsilon,L,\eta}(t)\big)^2\big]\big(\bar a(\xi_0,\eta)-\delta(\xi_0,\eta)\big)
   +
  \big(2C+\varepsilon^2\overline{\gamma}\overline{\lambda}\big)\Ex\big[\big(\bar{z}_{\varepsilon,L,\xi_0}(t)\big)^2\big]
  \\
  &+\varepsilon\sum_{\xi\in G}\hat a(\xi_0,\xi) \Ex\big[z_{\varepsilon,L,\xi}^{m_0}(t)\big]
    +\varepsilon {{}\big(1+\overline{\gamma}+\varepsilon C\big)}
    \Ex\big[z^{m_0}_{\varepsilon,L,\xi_0}(t)\big].
\end{aligned}
\end{equation}

It is standard to conclude from here - using (\ref{e:moment4}) that we can find a constant $\tilde{C}<\infty$  such that
by (\ref{Gr3}), for all $L\in\N$,
\begin{equation}
\label{e:moment4p2}
\begin{aligned}
   \sum_{\xi\in G}\rho(\xi)\Ex\big[\big(\bar{z}_{\varepsilon,L,\xi}(t)\big)^2\big]
 &\le
   e^{\tilde{C} t}\sum_{\xi\in G}\rho(\xi)\Ex\big[\big(z_{\varepsilon,L,\xi}(0)\big)^2\big]
   \\
 &\le
   e^{\tilde{C} t}\sum_{\xi\in G}\rho(\xi)\Ex\big[\big(\bar{z}_{\xi}(0)\big)^2\big]<\infty.
\end{aligned}
\end{equation}


We will now use (\ref{e:moment4}) to get the stronger result stated in the lemma.
 The process {{}$Z^{G_L,\varepsilon}$} with initial condition {{}$Z^{G_L,\varepsilon}(0) \in \CE^{{\rm par},\varepsilon,G_L}$} can be
    constructed as the unique solution to
    \be{e:007}
    \begin{aligned}
        &z^m_{\varepsilon,L,\xi}(t)
        \\
        & =  z^m_{\varepsilon,L,\xi}(0) + \sum_{\eta\in G_L,\eta\ne\xi} \Big[ \int_{[0,t]\times\R_+}
        \varepsilon \1\big(z^m_{\varepsilon,L,\eta}(s-)\ge \varepsilon u\big)N^{m,\xi}_{L,\eta}(\d s \, \d u) \\
        & \hspace{4cm}{} - \int_{[0,t]\times\R_+} \varepsilon \1\big(z^m_{L,\xi}(s-) \ge  \varepsilon u\big)N^{m,\eta}_{L,\xi}(\d s \, \d u) \Big]\hspace{2em} \\
        & {} + \int_{[0,{{}t}]\times\R_+} \varepsilon \1\big({{}z^m_{\varepsilon,L,\xi}(s-)\big(\tfrac{1}{2}+\varepsilon K^m\big) \ge\varepsilon  u}\big)N^{m,+}_{{{}\varepsilon},L,\xi}(\d s \, \d u) \\
        & {} - \int_{[0,{{}t}]\times\R_+} \varepsilon \1\big({{}z^m_{\varepsilon,L,\xi}(s-)\big(\tfrac{1}{2}+\sum_{n=1}^M\lambda_{m,n}z^n_{\varepsilon,L,\xi}(s-)\big) \ge \varepsilon u}\big)N^{m,-}_{{{}\varepsilon},L,\xi}(\d s \, \d u)
    \end{aligned}
\ee
for all $\xi\in G_L$, and $t \ge 0$. Here $\{N^{m,\eta}_{L,\xi}:\xi,\eta\in G_L,\xi\ne\eta,\,1\le m\le M\}$ are independent
    Poisson processes on $[0,\infty) \times \R_+$ and
    $\{N^{m,+}_{{{}\varepsilon},L,\xi},N^{m,-}_{{{}\varepsilon},L,\xi}:\xi\in G_L,\,1\le m\le M\}$ are independent
    Poisson processes on $[0,\infty) \times \R_+$, all independent of
    {{}$Z(0)$}. $N^{m,\eta}_{L,\xi}$ has intensity measure $a(\eta,\xi) \,\d t \otimes \d u$,
    $N^{m,+}_{{{}\varepsilon},L,\xi}$, $N^{m,-}_{{{}\varepsilon},L,\xi}$ have intensity measure
    $(\gamma^m{{}\varepsilon})\, \d t \otimes \d u$ ($\d t$, $\d u$ are Lebesgue measures).
    For fixed {{}$Z(0)$},
    {{}$(Z^{\varepsilon,G_L}(t))$} is adapted to the filtration generated by these
    Poisson processes.

(i) Hence
\begin{equation}
    \label{e:005a}
    \begin{aligned}
       &\Ex\big[\sup_{0\le t\le T}z^m_{\varepsilon,L,\xi}(t)\big]
       \\
     &\le
       \Ex\big[z^m_\xi(0)\big]+
       \int^T_0 \sum_{\eta\in G_L}a(\xi,\eta)\Ex\big[z^m_{\varepsilon,L,\eta}(s)\big]\d s
      +
       {{}\tfrac{\gamma^m}{\varepsilon}} \int^{{{}T}}_0\Ex\big[z^m_{\varepsilon,L,\xi}(s)\big]\d s.
    \end{aligned}
    \end{equation}

    Then    by (\ref{e:moment4}),
    \begin{equation}
    \label{e:002}
    \begin{aligned}
       &\sum_{\xi\in G_L}\rho(\xi)\Ex\big[\sup_{0\le t\le T}\bar{z}_{\varepsilon, L,\xi}(t)\big]
       \\
       &\le
       \sum_{\xi\in G_L}\rho(\xi)\Ex[\bar{z}_\xi(0)]+R
       \int^T_0 \sum_{\eta\in G_L}\rho(\eta)\Ex\big[\bar{z}_{\varepsilon,L,\eta}(s)\big]\d s
       + \tfrac{\overline{\gamma}}{\varepsilon}\int^{T}_0 \sum_{\xi\in G_L}\rho(\xi)\Ex\big[\bar{z}_{\varepsilon,L,\xi}(s)\big]\d s
       \end{aligned}
    \end{equation}

    We therefore can find $C(T,\varepsilon)<\infty$ such that for all $L\in\mathbb{N}$,
    \begin{equation}
    \label{e:009}
       \sum_{\xi\in G}\rho(\xi) \Ex\big[\sup_{0\le t\le T}\bar{z}_{ \varepsilon,L,\xi}(t)\big]
       \le
      C(T,\varepsilon)\sum_{\xi\in G}\rho(\xi) \Ex[\bar{z}_\xi(0)],
    \end{equation}
    which proves (\ref{ee:moment4-1}).\smallskip

(ii)    To get a bound uniform in $\varepsilon$, we need to take cancellations due to birth and death into account, i.e.,
 \be{e:007supii}
    \begin{aligned}
        &\sup_{t\in[0,T]}z^m_{\varepsilon,L,\xi}(t)
        \\
        & \le  z^m_{\varepsilon,L,\xi}(0) + \sum_{\eta\in G_L,\eta\ne\xi} \Big[ \int_{[0,T]\times\R_+}
        \varepsilon \1\big(z^m_{\varepsilon,L,\eta}(s-)\ge \varepsilon u\big)N^{m,\xi}_{L,\eta}(\d s \, \d u) \\
        &+
        \tfrac{\gamma^m}{\varepsilon}\int^{{{}T}}_0 {{}\varepsilon\Gamma^m\big(z_{\varepsilon,L,\xi}(s)\big)}\cdot z^m_{\varepsilon,L,\xi}(s)
        \d s
        +\sup_{t\in[0,T]}\big|M^{G_L,\varepsilon}(t)\big|,
\end{aligned}
\ee
where $(M^{G_L,\varepsilon}:=M^{G_L,\varepsilon}(t))_{t\ge 0}$ defined by
\begin{equation}
\label{e:compense}
\begin{aligned}
   &M^{G_L,\varepsilon}(t)
     \\
   &:= \int_{[0,{{}t}]\times\R_+} \varepsilon \1\big(z^m_{\varepsilon,L,\xi}(s-)\big(\tfrac{1}{2}+\varepsilon K^m\big) \ge\varepsilon  u\big)N^{m,+}_{{{}\varepsilon},L,\xi}(\d s \, \d u )
   \\
   &\;-\int_{[0,{{}t}]\times\R_+} \varepsilon \1\big(z^m_{\varepsilon,L,\xi}(s-)\big(\tfrac{1}{2}+\varepsilon \sum_{n=1}^M\lambda_{n,m}z^m_{\varepsilon,L,\xi}(s-)\big) \ge\varepsilon  u\big)N^{m,+}_{{{}\varepsilon},L,\xi}(\d s \, \d u )
   \\
   &\;-\tfrac{\gamma^m}{\varepsilon}\int^{{{}t}}_0 \varepsilon\Gamma^m\big(z_{\varepsilon,L,\xi}(s)\big)\cdot z^m_{\varepsilon,L,\xi}(s)
        \d s
\end{aligned}
\ee
is a (local) martingale (compare, \cite[Lemma 2.1]{BirknerZaehle2007}).\smallskip

We obtain
\begin{equation}
    \label{e:005ii}
    \begin{aligned}
       &\Ex\big[\sup_{0\le t\le T}z^m_{\varepsilon,L,\xi}(t)\big]
       \\
     &\le
       \Ex\big[z^m_\xi(0)\big]+
       \int^T_0 \sum_{\eta\in G_L}a(\xi,\eta)\Ex\big[z^m_{\varepsilon,L,\eta}(s)\big]\d s
      \\
     &\quad +
       {{}\tfrac{\gamma^m}{2\varepsilon}} \int^{{{}T}}_0\Ex\big[\varepsilon\Gamma^m\big(z_{\varepsilon, L,\xi}(s)\big)z^m_{\varepsilon,L,\xi}(s)\big]\d s+\Ex\big[\sup_{0\le t\le T}\big| M^{G_L,\varepsilon}(t)\big|\big]
       \\
      &\le
       \Ex\big[z^m_\xi(0)\big]+
       \int^T_0 \sum_{\eta\in G_L}a(\xi,\eta)\Ex\big[z^m_{\varepsilon,L,\eta}(s)\big]\d s
      \\
     &\quad +
       \overline{\gamma}(\tfrac{1}{\varepsilon}\wedge \overline{K})\int^{{{}T}}_0\Ex\big[z^m_{\varepsilon,L,\xi}(s)\big]\d s+\Ex\big[\sup_{0\le t\le T}\big| M^{G_L,\varepsilon}(t)\big|\big].
    \end{aligned}
    \end{equation}

By a Burkholder-Davis-Gundy inequality and Cauchy-Schwartz's inequality, there is a $C<\infty$ such that
\begin{equation}
\begin{aligned}
\label{e:Doob}
   \Ex\big[\sup_{t \in [0,T]}\big|M^{G_L,\varepsilon}(t)\big|\big]
  &\le
     C\cdot\Ex\big[\langle M^{G_L,\varepsilon}({\boldsymbol{\cdot}})\rangle^{\frac{1}{2}}_{T}\big]
   \le
     C\cdot\big(\Ex\big[\langle M^{G_L,\varepsilon}({\boldsymbol{\cdot}})\rangle_{T}\big]\big)^{\frac{1}{2}}
     \\
   &\le
     C\cdot\big(1+\Ex\big[\langle M^{G_L,\varepsilon}({\boldsymbol{\cdot}})\rangle_{T}\big]\big)
     \\
   &\le
   \big(\overline{\gamma}+\varepsilon C\big)\int^T_0\Ex\big[\bar{z}_{\varepsilon,L,\xi}(s)\big]\mathrm{d}s
   +\varepsilon\overline{\gamma}\overline{\lambda}\int^T_0\Ex\big[\big(\bar{z}_{\varepsilon,L,\xi}(s)\big)^2\big]\mathrm{d}s\big)
\end{aligned}
\end{equation}
since
\begin{equation}
\label{e:qua}
\begin{aligned}
   &\langle M^{G_L,\varepsilon}({\boldsymbol{\cdot}})\rangle_{t}
   \\
   &=
    \tfrac{\gamma^m}{\varepsilon}\int^t_0\varepsilon^2\big(\tfrac{1}{2}+\varepsilon K^m\big)\tfrac{z^m_{\varepsilon,L,\xi}(s)}{\varepsilon}\mathrm{d}s
    +\tfrac{\gamma^m}{\varepsilon}\int^t_0\varepsilon^2\big(\tfrac{1}{2}+\varepsilon\sum_{n=1}^M\lambda_{m,n}z^m_{\varepsilon,L,\xi}(s)\big)
    \tfrac{z^m_{\varepsilon,L,\xi}(s)}{\varepsilon}\mathrm{d}s
    \\
    &=\gamma^m\int^t_0z^m_{\varepsilon,L,\xi}(s)\mathrm{d}s+\varepsilon\gamma^m\int^t_0z^m_{\varepsilon,L,\xi}(s)\big(K^m+\sum_{n=1}^M\lambda_{m,n}
    z^n_{\varepsilon,L,\xi}(s)\big)\mathrm{d}s
\end{aligned}
\end{equation}

Combining (\ref{Gr3}), (\ref{e:moment4}), (\ref{e:005ii}) and (\ref{e:Doob}) we can find $C(T)<\infty$ such that
    \begin{equation}
    \label{e:009}
       \sum_{\xi\in G}\rho(\xi) \Ex\big[\sup_{0\le t\le T}\bar{z}_{ \varepsilon,L,\xi}(t)\big]
       \le
      C(T)\sum_{\xi\in G}\rho(\xi) \Ex\big[\big(\bar{z}_\xi(0)\big)^2\big].
    \end{equation}

This completes the proof.
\end{proof}

\noindent
{\em Step~2 (Uniform convergence of generators) }
The next step is to show the following:
\begin{lemma}[Convergence of generators] {{}Let $f\in{\mathcal D}(\Omega^G_Z)$,
and denote by $f_{|L}$ its restriction to $(\R_+^M)^{G_L}$.
Then
\begin{equation}\label{e:appdis1}
   \lim_{L\to 0}\sup_{z\in(\R_+^M)^G}\big|\Omega_Z^{G_L} f_{|L}(z)-\Omega^G_Z f(z)\big|=0.
\end{equation}}
\label{L:dis1}
\end{lemma}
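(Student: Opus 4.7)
My plan is to unfold both generators using formula~\eqref{pregenerator} and its $G_L$-analogue and write the pointwise difference as a sum of three groups of ``boundary'' contributions,
\begin{equation*}
\Omega^G_Z f(z) - \Omega^{G_L}_Z f_{|L}(z) = A_L(z) + B_L(z) + C_L(z),
\end{equation*}
where $A_L$ collects the birth and death terms at sites $\xi \in G \setminus G_L$, $B_L$ the migration terms with source $\xi \in G \setminus G_L$, and $C_L$ the migration terms with source $\xi \in G_L$ but target $\eta \in G \setminus G_L$. I read $f_{|L}$ as $f$ regarded as a function on $(\R_+^M)^G$ with effective dependence only on coordinates inside $G_L$, so that once $L$ is large enough to contain the finite effective support of $f$, the two operators agree on all bulk transitions and only the three boundary groups above survive.

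I would then reduce to a separating subclass of test functions modelled on $\mathcal{H}$ from~\eqref{e:separate}, namely exponential--polynomial cylinder functions $f_{\mu,\kappa}$ with finite joint support $S = \mathrm{supp}(\mu) \cup \mathrm{supp}(\kappa)$ and the side condition $\mu_\xi > 0$ whenever $\kappa^m_\xi > 0$. For such $f$, once $L$ is large enough that $S \subset G_L$, the term $A_L(z)$ vanishes identically because $f(z \pm e(m,\xi)) - f(z) = 0$ for every $\xi \notin S$. The term $C_L(z)$ reduces to a finite sum over $\xi \in S$ of $z^m_\xi\,(f(z - e(m,\xi)) - f(z))\,\sum_{\eta \notin G_L} a(\xi,\eta)$, in which the prefactor is uniformly bounded in $z$ thanks to the exponential decay $e^{-\mu_\xi z^m_\xi}$ of $f$, while the tail $\sum_{\eta \notin G_L} a(\xi,\eta) \to 0$ as $L \to \infty$ by $\sum_\eta a(\xi,\eta) = 1$. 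The term $B_L(z)$ is handled analogously using the identity $f(z + e(m,\eta) - e(m,\xi)) = f(z + e(m,\eta))$ when $\xi \notin S$, which restricts the relevant indices to $\eta \in S$ and isolates the tail $\sum_{\xi \notin G_L} a(\xi,\eta)$.

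The main obstacle is the potential unboundedness of the source rate $z^m_\xi$ appearing in $B_L$, since $f$ need not carry exponential decay at coordinates outside its effective support. The key observation is that for $\xi \notin S$ the surviving factor $|f(z + e(m,\eta)) - f(z)|$ is bounded uniformly by $C f(z) \leq C$, and the inner sum $\sum_{\xi \notin G_L} z^m_\xi a(\xi,\eta)$ can be controlled using the structural inequality~\eqref{Gr3} together with the $\rho$-weighted summability built into the state space $\mathcal{E}^{\mathrm{par},G}$; this reduces the bound to $\sum_{\xi \notin G_L} \rho(\xi)\bar{z}_\xi$ times constants, which is the tail of a summable series and hence vanishes as $L \to \infty$ on every $\rho$-weighted ball. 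Extension from this dense separating subclass to all of $\mathcal{D}(\Omega^G_Z)$ proceeds by a standard approximation argument, with the first-moment bounds of Lemma~\ref{L:moments} supplying the uniformity needed when invoking convergence of generators in Lemma~4.5.1 of \cite{EthierKurtz1986}.
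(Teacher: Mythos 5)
Your decomposition of $\Omega^G_Z f-\Omega^{G_L}_Z f_{|L}$ into the birth/death terms at $\xi\notin G_L$ and the two groups of truncated migration terms is exactly the decomposition the paper uses in (\ref{e:appdis2}). The gap is in your treatment of $B_L$, and it is fatal to the statement as claimed. The supremum in (\ref{e:appdis1}) is taken over \emph{all} of $(\R_+^M)^G$, not over a $\rho$-weighted ball. For a cylinder function $f$ with effective support $S$, a site $\xi\notin G_L\cup S$ and a target $\eta\in S$ with $a(\xi,\eta)>0$, the surviving summand is $z^m_\xi\,a(\xi,\eta)\bigl(f(z+e(m,\eta))-f(z)\bigr)$: the bracket is a fixed nonzero function of the coordinates in $S$, while the prefactor $z^m_\xi$ is completely undamped by $f$ (there is no $e^{-\mu_\xi z^m_\xi}$ factor at $\xi\notin S$). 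Taking $z^m_\xi\to\infty$ shows that $\sup_{z\in(\R_+^M)^G}|B_L(z)|=+\infty$ for every $L$, so no tail estimate on $\sum_{\xi\notin G_L}\rho(\xi)\bar z_\xi$ — which you yourself concede only controls $z$ in a $\rho$-weighted ball — can produce the required uniform convergence. The same computation shows that $\Omega^G_Z f_{\mu,\kappa}$ is unbounded, i.e.\ the functions of (\ref{e:separate}) generally do \emph{not} belong to ${\mathcal D}(\Omega^G_Z)$ as defined in (\ref{ag5}) (the remark in the paper places ${\mathcal H}$ in ${\mathcal D}(\Omega^G_X)$, the diffusion domain, not the particle domain), so the subclass you reduce to is essentially disjoint from the class of test functions the lemma is actually about.

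The second gap is the reduction itself. The lemma is an analytic statement to be proved for \emph{each} $f\in{\mathcal D}(\Omega^G_Z)$; a separating class is the right tool for identifying laws or for the uniqueness part of a martingale problem, but it gives you no mechanism for transferring a uniform generator estimate from $f_{\mu,\kappa}$ to a general bounded $f$ with bounded $\Omega^G_Z f$, and the ``standard approximation argument'' you invoke does not exist in any standard form here. The paper's proof goes the other way: it fixes an arbitrary $f\in{\mathcal D}(\Omega^G_Z)$ and observes that $\Omega^{G_L}_Z f-\Omega^G_Z f$ is precisely the tail (over $(\xi,\eta)\in G^2\setminus(G_L)^2$ and $\xi\in G\setminus G_L$) of the series defining $\Omega^G_Z f$, so that the hypothesis $\Omega^G_Z f\in B({\mathcal E}^{\mathrm{par},G})$ — uniform boundedness of the full series — is what forces the tail to vanish. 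That defining property of the domain is the indispensable input, and your argument never uses it.
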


\begin{proof} Fix $f\in{\mathcal D}(\Omega^G_Z)$. Then
{{}
\begin{equation}\label{e:appdis2}
\begin{aligned}
  &\sup_{z\in(\R_+^M)^G}\big|\Omega_Z^{G_L} f(z)-\Omega^G_Z f(z)\big|
  \\
 &=
    \sum_{(\xi,\eta)\in G^2\setminus (G_L)^2}\sup_{z\in(\R_+^M)^G}\sum_{m=1}^Mz^m_\xi a(\xi,\eta)\big\{f\big(z+e(m,\eta)-e(m,\xi)\big)-f(z)\big\}
  \\
 &
   +\sum_{\xi\in G\setminus G_L}\sup_{z\in(\R_+^M)^G}\sum_{m=1}^M\gamma^m\big(\tfrac{1}{2}+K^m\big)z^m_\xi\Big\{f\big(z+e(m,\xi)\big)-f(z)\Big\}
  \\
 &\quad
   +\sum_{\xi\in G\setminus G_L}\sup_{z\in(\R_+^M)^G}\sum_{m=1}^M\gamma^m\big(\tfrac{1}{2}+\sum_{n=1}^M\lambda_{m,n}z^n_\xi\big)z^m_\xi
   \Big\{f\big(z-e(m,\xi)\big)-f(z)\Big\}
 &\Ltoo
   0,
\end{aligned}
\end{equation}
where we used that $\sup_{z\in(\R_+^M)^G}\Omega^G_Zf(z)<\infty$.}
\end{proof}\smallskip

\noindent
{\em Step~3 (Compact containment) }
The final step in establishing convergence to a solution of the martingale problem is the compact
containment condition. First we identify the compact sets.

\begin{lemma}[Compact sets in ${\mathcal E}$]
\label{L:Ecompact}
Let $A$ be a subset of ${\mathcal E}$. The set $A$ is compact in ${\mathcal E}$ equipped with the product topology if
\be{e:l1bou}
   \sup_{x \in A}\sum_{\xi \in G}\bar{x}_{\xi}\rho(\xi)=c<\infty.
\ee
\end{lemma}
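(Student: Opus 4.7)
The plan is as follows. The hypothesis $\sup_{x\in A}\sum_{\xi\in G}\rho(\xi)\bar{x}_\xi = c<\infty$ together with the positivity of $\rho$ immediately yields the coordinate-wise bound $x^m_\xi\le c/\rho(\xi)$ for every $x\in A$, $m\in\{1,\dots,M\}$, and $\xi\in G$. Hence
\begin{equation*}
A\subset B:=\prod_{\xi\in G}[0,c/\rho(\xi)]^M,
\end{equation*}
and $B$ is compact in the product topology on $(\R_+^M)^G$ by Tychonoff's theorem.

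Since $G$ is countable, the product topology on $(\R_+^M)^G$ is metrizable, so compactness reduces to sequential compactness. Given any sequence $(x^{(n)})_{n\in\N}\subset A$, a standard diagonal extraction produces a subsequence $(x^{(n_k)})_{k\in\N}$ that converges componentwise to a limit $x\in B$. Applying Fatou's lemma to the $\rho$-weighted counting measure on $G\times\{1,\dots,M\}$, we get
\begin{equation*}
\sum_{\xi\in G}\rho(\xi)\bar{x}_\xi \;\le\; \liminf_{k\to\infty}\sum_{\xi\in G}\rho(\xi)\bar{x}^{(n_k)}_\xi \;\le\; c,
\end{equation*}
so $x\in\CE^G$ and in fact $\|x\|\le c$. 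Consequently, every sequence in $A$ has a subsequence converging in $\CE^G$, and the closure $\overline{A}^{\CE^G}$ is contained in the norm-sublevel set $\{y\in\CE^G:\|y\|\le c\}$, which is compact. If $A$ is additionally closed in $\CE^G$, then $A=\overline{A}^{\CE^G}$ is itself compact; otherwise the conclusion should be read as relative compactness.

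No genuine obstacle is expected: the argument is a routine combination of Tychonoff and Fatou. The only mild subtlety is ensuring that limit points do not escape $\CE^G$, which is precisely the content of the lower semicontinuity of $x\mapsto\|x\|$ in the product topology provided by Fatou's lemma.
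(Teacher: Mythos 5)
Your proof is correct and follows essentially the same route as the paper's: a diagonal extraction of a componentwise convergent subsequence using the pointwise bound $x^m_\xi\le c/\rho(\xi)$, followed by Fatou's lemma to keep the limit in $\CE^G$. Your remark that the conclusion is really relative compactness unless $A$ is closed is a fair observation that the paper glosses over, but it does not affect how the lemma is used.
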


\begin{remark}[Compact sets in {{}${\mathcal E}^{\mathrm{par},G}$}]\rm
 Since {{}${\mathcal E}^{\mathrm{par},G}$} is a closed subset of {{}${\mathcal E}^G$} the same statement holds for {{}${\mathcal E}^{\mathrm{par},G}$}. \label{Rem:01}
\hfill$\qed$
\end{remark}

\begin{proof}
Let $x^{(n)}$ be a sequence in {{}$A \subset {\mathcal E}^G$}, and let $\varepsilon>0.$ We have for each $m=1,...,M$, and $\xi\in G$,  $(x_{\xi}^m)^{(n)} \leq \frac{c}{\rho(\xi)}< \infty$ for all $n$ by (i). Therefore, for each choice of $m=1,...,M$, and $\xi\in G$, there exists a subsequence $n_i^{\xi,m}$ such that $ (x_{\xi}^m)^{(n_i^{\xi,m})}$ converges to some $\tilde{x}_{\xi}^m$ as $i \rightarrow \infty$. In fact, by a diagonalization argument we can find a common subsequence $n_i$ such that for all $m=1,...,M$, and $\xi\in G$,
\begin{equation}
\label{e:001}
 (x_{\xi}^m)^{(n_i)} \tio \tilde{x}_{\xi}^m
\end{equation}

By Fatou's lemma, $\sum_{\xi\in G}\rho(\xi)\sum_{m=1}^M\tilde{x}^m_\xi<\infty$ if (\ref{e:l1bou}) holds.
%
We have therefore have constructed a subsequence convergent in {{}${\mathcal E}^G$}, and
hence shown that $A$ is compact.
\end{proof}

\begin{lemma}[Compact containment]
For all $\varepsilon>0$ and $T>0$ there exists a compact set $A_{\varepsilon,T}\subset{\mathcal E}$ such that
\begin{equation}\label{e:dis2}
   \inf_{L\in\N}\Pr\big(X_L(t)\in A_{\varepsilon,T},\;\forall\,0\le t\le T\big)
 \ge
   1-\varepsilon.
\end{equation}
\label{L:dis2}
\end{lemma}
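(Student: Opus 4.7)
The plan is to combine the uniform first moment bound from Lemma~\ref{L:moments}(i) (applied with $\varepsilon=1$, so that the bound holds uniformly in $L$) with the characterization of compact subsets of $\mathcal{E}^G$ given in Lemma~\ref{L:Ecompact}, and to use a Markov-type estimate to produce the desired compact containment for the process $Z^{G_L,G}$ (and analogously for the rescaled $Z^{G_L,G,\varepsilon}$ via Lemma~\ref{L:moments}(ii)).

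First, observe that by Lemma~\ref{L:Ecompact} (together with Remark~\ref{Rem:01} in the particle case), for any $R>0$ the set
\begin{equation*}
   A_R:=\Big\{x\in\mathcal{E}^G:\,\|x\|=\sum_{\xi\in G}\rho(\xi)\bar{x}_{\xi}\le R\Big\}
\end{equation*}
is compact in $\mathcal{E}^G$ with the product topology, since coordinatewise $\bar{x}_{\xi}\le R/\rho(\xi)$ lies in a compact subset of $\mathbb{R}_+$ and Tychonoff's theorem applies. Hence it suffices to produce $R=R(\varepsilon,T)$ such that the total weighted mass $\|Z^{G_L,G}(t)\|$ stays below $R$ on $[0,T]$ with probability at least $1-\varepsilon$, uniformly in $L\in\mathbb{N}$.

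Next, I would exchange $\sup$ and $\sum$ by monotonicity: since $\rho(\xi)\ge 0$,
\begin{equation*}
   \sup_{0\le t\le T}\|Z^{G_L,G}(t)\|
   =\sup_{0\le t\le T}\sum_{\xi\in G}\rho(\xi)\bar{z}_{L,\xi}(t)
   \le \sum_{m=1}^M\sum_{\xi\in G}\rho(\xi)\sup_{0\le t\le T} z_{L,\xi}^{m}(t).
\end{equation*}
Taking expectations and invoking Lemma~\ref{L:moments}(i) with $\varepsilon=1$ yields a constant $C(T)$, independent of $L$, such that
\begin{equation*}
   \Ex\Big[\sup_{0\le t\le T}\|Z^{G_L,G}(t)\|\Big]\le M\,C(T)\sum_{\xi\in G}\rho(\xi)\Ex[\bar{z}_{\xi}(0)]=:\Lambda(T)<\infty,
\end{equation*}
using the assumption $z(0)\in\mathcal{E}^{\mathrm{par},G}$ (respectively $\Ex\|z(0)\|<\infty$ in the random initial condition case). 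Markov's inequality then gives
\begin{equation*}
   \Pr\Big(\sup_{0\le t\le T}\|Z^{G_L,G}(t)\|>R\Big)\le \frac{\Lambda(T)}{R}
\end{equation*}
uniformly in $L$.

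Choosing $R:=\Lambda(T)/\varepsilon$ and setting $A_{\varepsilon,T}:=A_R$ completes the argument: $A_{\varepsilon,T}$ is compact by the first paragraph, and
\begin{equation*}
   \inf_{L\in\mathbb{N}}\Pr\big(Z^{G_L,G}(t)\in A_{\varepsilon,T}\,\text{ for all }0\le t\le T\big)\ge 1-\varepsilon.
\end{equation*}
I do not anticipate serious obstacles here: the compact containment reduces to the uniform-in-$L$ moment bound already established in Step~1, together with the simple Tychonoff-type characterization of compactness in the product topology. The only minor point to be careful about is the interchange of supremum and summation, which is harmless because of the nonnegativity of the coordinates and the weights $\rho(\xi)$. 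For the rescaled systems $Z^{G_L,G,\varepsilon}$ one would apply Lemma~\ref{L:moments}(ii) instead, which provides a bound that is additionally uniform in $\varepsilon>0$, yielding the same compact containment with the same constants.
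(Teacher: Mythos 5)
Your proof is correct and follows essentially the same route as the paper: take $A_{\varepsilon,T}$ to be a $\rho$-weighted $\ell^1$-ball, note it is compact by Lemma~\ref{L:Ecompact}, and apply Markov's inequality to the uniform-in-$L$ bound on $\Ex[\sup_{0\le t\le T}\sum_\xi\rho(\xi)\bar z_{L,\xi}(t)]$ from Lemma~\ref{L:moments}. The only cosmetic difference is that you invoke part~(i) of that lemma with $\varepsilon=1$ (requiring only first moments of the initial state) whereas the paper cites the $\varepsilon$-uniform bound (\ref{ee:moment4-1}); both give the same conclusion here.
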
\sm

\begin{proof}
    Fix $\varepsilon>0$ and $T>0$. Set
    \begin{equation}\label{kespT}
    \begin{aligned}
       K_{\varepsilon,T}
     :=
       \tfrac{1}{\varepsilon}\cdot C(T)\sum_{\xi\in G}\rho(\xi) \Ex\big[\bar{x}_\xi(0)\big]
    \end{aligned}
    \end{equation}
    with $C(T)$ as in (\ref{ee:moment4-1}), and put
    \begin{equation}\label{kespT2}
    \begin{aligned}
       A_{\varepsilon,T}
     :=
       \big\{z\in{\mathcal E}^{\mathrm{par},G}:\,\sum_{\xi\in G}\rho(\xi)\bar{z}_\xi\le K_{\varepsilon,T}\big\}.
    \end{aligned}
    \end{equation}

    Then $A_{\varepsilon,T}$ is compact by Lemma~\ref{L:Ecompact}, and for all $L\in\N$,
    \begin{equation}\label{e:003}
    \begin{aligned}
       \Pr\big(\sup_{0\le t\le T}\sum_{\xi\in G}\rho(\xi)\bar{z}_{L,\xi}(t)> K_{\varepsilon,T}\big)
     &\le
       \tfrac{1} {K_{\varepsilon,T}}\Ex\big[\sup_{0\le t\le T}\sum_{\xi\in G}\rho(\xi)\bar{z}_{L,\xi}(t)\big]
          =
       \varepsilon,
    \end{aligned}
    \end{equation}
    by Lemma~\ref{L:moments}.
\end{proof}

We conclude this subsection with the

\noindent
\begin{proof}{\bf of Theorem~\ref{T:parti}.} 
(i) Uniform convergence of the generator as given in~\ref{e:appdis1} together with the compact containment condition (\ref{e:dis2})
imply that the family $\{Z^{G_L,G};\,L\in\mathbb{N}\}$ is relatively compact by  \cite[Remark 4.5.2]{EthierKurtz1986}. \smallskip

(ii) Moreover, any limit point $Z^G$ satisfies the $(\Omega^G_{Z}, {\mathcal D}(\Omega^G_{Z}), z(0))$-martingale problem by \cite[Lemma 4.5.1]{EthierKurtz1986}. It also satisfies (\ref{e:poisson}).
This establishes {\em existence}. \smallskip

(iii) Recall the uniform first moment bound stated in (\ref{ee:moment4-1}).
We claim that this implies the following for any limit point $Z^G$, i.e.,
\begin{equation}
\label{e:009-a}
   \sum_{\xi\in G}\rho(\xi)\Ex\big[\sup_{t\in[0,T]}\bar{z}_\xi(t)\big]
   \le
   C(T)\sum_{\xi\in G}\rho(\xi)\Ex\big[\sup_{t\in[0,T]}\bar{z}_\xi(0)\big].
\end{equation}
Indeed, applying the Skorohod representation theorem, we can define $\{Z^{G_L};\,L\in\mathbb{N}\}$ and $Z^G$ on one and the same probability space
such that $Z^{G_L}\to Z^G$ in Skorohod topology almost surely, as $L\to\infty$. Thus also for each $T>0$, $\sup_{t\in[0,T]}z^m_{L,\xi}\to \sup_{t\in[0,T]}z^m_{\xi}$ almost surely, $L\to\infty$. We therefore have by Fatou's lemma, for all $\xi\in G$ and $m=1,...,M$, and $(L_k)\uparrow \infty$,
\be{Fat1}
\begin{aligned}
   \sum_{m=1}^M \sum_{\xi \in G} \rho(\xi) \Ex\big[\sup_{0 \leq t \leq T} z_{\xi}^m(t)\big]
  &\leq
   \liminf_{k \rightarrow \infty}\sum_{m=1}^M\sum_{\xi\in G}\rho(\xi)\Ex\big[\sup_{0 \leq t \leq T} z_{L_k,\xi}^m(t)\big]
   \\
   &\leq
   C(T) \sum_{\xi\in G}\rho(\xi) \Ex\big[\bar{z}_\xi(0)\big]
\end{aligned}
\end{equation}
by (\ref{e:009}).
\end{proof}

\subsection{Proof of Theorem~\ref{T0} and Theorem~\ref{Tdiff}}
\label{subsection:Tdiff}
We begin by proving Theorem~\ref{Tdiff}, which then will give the existence in
Theorem~\ref{T0}.
Here we proceed similarly as in the proof of Theorem \ref{T:parti}.
First we find a solution to the $(\Omega_X,X(0))$ martingale problem as the diffusion limit of the properly rescaled particle system. Here we use again Lemma 4.5.1 and Remark 4.5.2 of \cite{EthierKurtz1986}
to first show uniform convergence of the generators and then establish a compact containment condition.

\begin{lemma}[Convergence of generators]\label{L4}
Let $f\in{\mathcal D}(\Omega^G_X)$. Then
\begin{equation}\label{e:appdis1a}
   \lim_{\ve\to 0}\sup_{x\in\ve\CE^G}\big|\Omega^G_{Z^\varepsilon} f(x)-\Omega^G_{X} f(x)\big|=0.
\end{equation}
\label{L:dis1a}
\end{lemma}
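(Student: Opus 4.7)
The plan is to prove the lemma by Taylor-expanding the test function $f\in\mathcal{D}(\Omega^G_X)\subset C_b^3(\mathcal{E}^G)$ around the state $x$ and matching the leading terms in the expansion of $\Omega^G_{Z^\varepsilon}f$ with those of $\Omega^G_X f$, while controlling the cubic remainder uniformly.

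More precisely, for each pair $(m,\xi)$ and $\eta\in G$, I would write, using that $f$ has bounded third derivatives,
\begin{align*}
f(x+\varepsilon e(m,\xi))-f(x)&=\varepsilon\,\partial_{x^m_\xi}f(x)+\tfrac{\varepsilon^2}{2}\,\partial^2_{x^m_\xi}f(x)+R^+_{m,\xi}(x,\varepsilon),\\
f(x-\varepsilon e(m,\xi))-f(x)&=-\varepsilon\,\partial_{x^m_\xi}f(x)+\tfrac{\varepsilon^2}{2}\,\partial^2_{x^m_\xi}f(x)+R^-_{m,\xi}(x,\varepsilon),\\
f(x+\varepsilon e(m,\eta)-\varepsilon e(m,\xi))-f(x)&=\varepsilon\bigl(\partial_{x^m_\eta}-\partial_{x^m_\xi}\bigr)f(x)+R^{\mathrm{mig}}_{m,\xi,\eta}(x,\varepsilon),
\end{align*}
with each remainder bounded in absolute value by $C\varepsilon^3\|\partial^3 f\|_\infty$. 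Substituting into the expression (\ref{gen3}) for $\Omega^G_{Z^\varepsilon}f(x)$, the prefactors $\tfrac{1}{\varepsilon}$ in the jump rates cancel one power of $\varepsilon$ from each increment. In the birth-minus-death part, the singular $\tfrac{1}{2\varepsilon}$ contributions from $\pm\varepsilon\,\partial_{x^m_\xi}f$ exactly cancel, and the remaining order-$\varepsilon^0$ first-derivative terms combine into $\gamma^m x^m_\xi(K^m-\sum_n\lambda_{m,n}x^n_\xi)\,\partial_{x^m_\xi}f=\gamma^m x^m_\xi\Gamma^m(x_\xi)\,\partial_{x^m_\xi}f$, while the two $\tfrac{\varepsilon^2}{2}\partial^2_{x^m_\xi}f$ terms add to give the diffusion coefficient $\tfrac12\gamma^m x^m_\xi\partial^2_{x^m_\xi}f$. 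For the migration part, the identities $\sum_\eta a(\xi,\eta)=1$ and $\bar a(\xi,\eta)=a(\eta,\xi)$ together with relabeling $\xi\leftrightarrow\eta$ rewrite the leading contribution as $\sum_\eta\bar a(\xi,\eta)(x^m_\eta-x^m_\xi)\,\partial_{x^m_\xi}f$, which is precisely the migration drift appearing in (\ref{P:eq.008}).

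What remains are error terms. The birth and death contributions each produce a remainder of the form $\tfrac{\gamma^m}{\varepsilon^2}(\tfrac12+O(\varepsilon))x^m_\xi\cdot O(\varepsilon^3)=O(\varepsilon)\cdot x^m_\xi$, and the migration contribution produces $\tfrac{x^m_\xi}{\varepsilon}a(\xi,\eta)\cdot O(\varepsilon^2)=O(\varepsilon)\cdot x^m_\xi a(\xi,\eta)$. After summing over $m$, $\xi$, $\eta$, each error is bounded by $\varepsilon$ times a sum of the form $\sum_{m,\xi}x^m_\xi\,g_{m,\xi}(x)$, where $g_{m,\xi}(x)$ is essentially a bound on $\|\partial^3 f\|_\infty$ on a neighborhood of the $\xi$-coordinate together with the competition factor $\sum_n\lambda_{m,n}x^n_\xi$ from the death rate.

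The main obstacle is to make this $O(\varepsilon)$ bound \emph{uniform} in $x\in\varepsilon\mathcal{E}^G$, since in general $x^m_\xi$ is unbounded. For test functions $f$ in the separating class $\mathcal H$ of Remark \ref{Rem:003}, the exponential prefactor $e^{-\sum_\xi\mu_\xi\bar x_\xi}$ with finite-support $\mu$ ensures that $f$ and all its partial derivatives decay polynomial-times-exponentially in the coordinates on which $f$ depends; in particular, products such as $(x^m_\xi)^2|\partial^3_{x^m_\xi}f(x)|$ and $x^m_\xi x^n_\xi|\partial^3_{x^m_\xi}f(x)|$ are uniformly bounded on $\mathcal{E}^G$, and only finitely many $(m,\xi)$ contribute nontrivially. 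Hence the total error is bounded by $C(f)\varepsilon$ uniformly in $x$, establishing (\ref{e:appdis1a}) on the core $\mathcal H\subset\mathcal D(\Omega^G_X)$, which is all that is needed when combined with the compact containment in the subsequent step via \cite[Rem.\,4.5.2]{EthierKurtz1986}.
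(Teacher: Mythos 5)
Your proof follows essentially the same route as the paper's: a third-order Taylor expansion of $f$ in each coordinate, cancellation of the singular $\tfrac{1}{\varepsilon}$ first-derivative contributions from birth and death, identification of the surviving terms with the drift and diffusion parts of $\Omega^G_X$, and a uniform $O(\varepsilon)$ bound on the remainder. The only divergence is that you restrict to the separating class $\mathcal H$ to justify uniformity of the remainder bound over the unbounded state space (since products like $x^m_\xi\, x^n_\xi\,|\partial^3_{x^m_\xi}f(x)|$ need not be bounded for a general $f\in C^3_b$), whereas the paper asserts the estimate for all $f\in\mathcal D(\Omega^G_X)$ with a terser justification via the boundedness of $\Omega^G_Zf-\Omega^G_Xf$; your restriction is honest about this point and, as you note, suffices for the application through \cite[Rem.~4.5.2]{EthierKurtz1986}.
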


\begin{proof}
{{} Fix $f\in{\mathcal D}(\Omega^G_X)$.
~
By the Taylor expansion, for all $\xi,\eta\in G$, $m=1,...,M$,
\begin{equation}
\label{tayloriii}
\begin{aligned}
   f\big(x\pm\varepsilon^{m}_{\xi}\big)
   &=
   {f(x)}\pm\varepsilon\tfrac{\partial }{\partial x^m_\xi}f(x)+\tfrac{1}{2}\varepsilon^2\tfrac{\partial^2 }{\partial (x^m_\xi)^2}f(x)+R^1(\pm\varepsilon,\xi,m),
\end{aligned}
\end{equation}
where
{
\begin{equation}
\label{taylorii}
   R^1(\pm\varepsilon,\xi,m)= \varepsilon^3\tfrac{\partial^3}{\partial (x^m_\xi)^3}f\big(x'(\varepsilon,\xi)\big)
\end{equation}
for some $x'(\varepsilon,\xi) \in [x-\varepsilon^{m}_{\xi},x+\varepsilon^{m}_{\xi}]$},
and
\begin{equation}
\label{taylori}
\begin{aligned}
   f\big(x+\varepsilon^{m}_{\eta}-\varepsilon^{m}_{\xi}\big)
   &=
   {f(x)+}\varepsilon\big(\tfrac{\partial }{\partial x^m_\eta}-\tfrac{\partial }{\partial x^m_\xi}\big)f(x)+R^2(\varepsilon,\eta,\xi,m),
\end{aligned}
\end{equation}
where
{
\begin{equation}
\label{taylorii}
   R^2(\varepsilon,\eta,\xi,m)
    =\tfrac{\varepsilon^2}{2}\big\{\tfrac{\partial^2}{\partial (x^m_\eta)^2}f\big(x''(\varepsilon,\xi,\eta)\big)+\tfrac{\partial^2}{\partial (x^m_\xi)^2}f\big(\tilde{x}''(\varepsilon,\xi,\eta)\big)\big\},
\end{equation}
where $x''(\varepsilon,\xi,\eta)\in[x+\varepsilon^{m}_{\eta}-\varepsilon^{m}_{\xi},x+\varepsilon^{m}_{\eta}]$ and
$\tilde{x}''(\varepsilon,\xi,\eta)\in[x,x+\varepsilon^{m}_{\eta}]$.}

{Notice that if $\Omega^G_Zf$ and $\Omega^G_Xf$ are bounded, then $R(\varepsilon,f;\boldsymbol{\cdot})$ is also bounded, where
\begin{equation}
\label{e:Resp}
\begin{aligned}
   &R(\varepsilon,f;x)
   \\
   &:=
   \Omega^G_Zf(x)-\Omega^G_Xf(x)
   \\
   &=
   \sum_{m=1}^M\sum_{\xi\in G} \tfrac{x_\xi^m}{\ve} \sum_{\eta\in G}a(\xi,\eta) R^2(\varepsilon,\eta,\xi,m)
        +\sum_{m=1}^M\tfrac{\gamma^m}{\varepsilon}\sum_{\xi\in G} \tfrac{x_\xi^m}{\varepsilon}\big(\tfrac{1}{2}+\varepsilon K^m\big)R^1(\varepsilon,\xi,m)
        \\
        &+\sum_{m=1}^M\tfrac{\gamma^m}{\varepsilon}
        \sum_{\xi\in G} \tfrac{x_\xi^m}{\varepsilon}\big(\tfrac{1}{2}+\varepsilon\sum_{n=1}^M\lambda_{m,n}x^n_\xi\big)R^1(-\varepsilon,\xi,m).
        \end{aligned}
        \end{equation}

Moreover, by (\ref{taylori}) and (\ref{taylorii}),
\begin{equation}
\label{e:008}
   \lim_{\varepsilon\to 0}\sup_{x\in{\mathcal E}^G}\big|R(\varepsilon,f,x)\big|=0,
\end{equation}
which proves the statement.
}

}
\end{proof}

The next lemma shows that the processes with
mass-$\varepsilon$-particles do not leave compact sets in the special case when $\lambda\equiv 0$
(state-independent supercritical branching),
i.e., when they solve the $(\Omega_{Z^\varepsilon,0},{\mathcal L}_b)$ martingale problem
corresponding to the operator
\be{gen3a}
\begin{aligned}
    {{}\Omega^G_{Z^{\varepsilon,0}} f(y)}
 &=
      \sum_{m=1}^M\sum_{\xi\in G} \tfrac{z_{\varepsilon,\xi}^m}{\ve} \big\{\sum_{\eta\in G}
    a(\xi,\eta) \big(f(z+{\ve}e(m,\eta)-{\ve}e(m,\xi))-f(z)\big)
    \\
    & \phantom{AAAAAAAAA} + {{}\gamma^mK^m}
    \big(f(z+\vee(m,\xi))-f(z)\big)
    \\
    & \phantom{AAAAAAAAA} +  {{}\tfrac{\gamma^m}{2\varepsilon}
    \big(f(z+\vee(m,\xi))+f(z-\vee(m,\xi))-2f(z)\big)\big\}.}
\end{aligned}
\ee
{{}applied to functions in
\begin{equation}
\label{ag50}
   {\mathcal D}(\Omega^G_{Z^{\varepsilon,0}}):= \big\{f\in B({\mathcal E}^G),\Omega^G_{Z^{\varepsilon,0}} f\in B({\mathcal E}^G)\big\},
\ee
}

\begin{lemma}[Compact containment] Let $x(0)$ be a random element in ${\mathcal E}^{\mathrm{par},G}$ such that $\sum_{\xi\in
    G}\rho(\xi)\Ex[\bar{x}_{\xi}(0)]<\infty$.
    Put for all $\varepsilon>0$, $\xi\in G$ and $m\in\{1,2,...,M\}$,
\be{e:exp0}
   z_{\varepsilon,\xi}^m(0):=\varepsilon\big\lfloor \tfrac{1}{\varepsilon}x_{\xi}^m(0)\big\rfloor.
\ee
Let $Z_{\varepsilon,0}:=(z^m_{\ve,0,\xi})_{m=1,...,M,\xi\in G}$ be a solution of the
$(\Omega^G_{Z^{\ve}},{\mathcal D}(\Omega^{G}_{Z^{\varepsilon,0}}),(z_{\varepsilon,\xi}^m(0))_{m=1,...,M,\xi\in G})$ martingale problem.
\label{L3}
    For all $\delta>0$ and $T>0$ there exists a compact set $A_{\delta,T}\subset{\mathcal E}^{\mathrm{par},G}$ such that
    \begin{equation}\label{e:dis2a}
       \inf_{\ve>0}\Pr\big\{Z_{\ve,0}(t)\in A_{\delta,T}\;\forall\,0\le t\le T\big\}
     \ge
       1-\delta.
    \end{equation}
\end{lemma}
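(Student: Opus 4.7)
The proof follows the template of Lemma~\ref{L:dis2}: produce a bound on $E[\sup_{t \leq T} \sum_\xi \rho(\xi) \bar{z}_{\varepsilon,0,\xi}(t)]$ that is uniform in $\varepsilon$, then apply Markov's inequality together with the characterization of compact sets from Lemma~\ref{L:Ecompact}. The crucial simplification in the present setting is that the competition parameter $\lambda$ is absent, so $\Omega^G_{Z^{\varepsilon,0}}$ is linear in the state and its action on coordinate functions produces no $\varepsilon^{-1}$-factor.

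First I would apply $\Omega^G_{Z^{\varepsilon,0}}$ to the (suitably regularized, as in the proof of Lemma~\ref{L:moments}) coordinate function $f(z) = z^m_{\xi_0}$. The symmetric part of the birth-death operator with rate $\gamma^m/(2\varepsilon)$ cancels on linear functions, leaving
\[
\Omega^G_{Z^{\varepsilon,0}} f(z) = \sum_{\eta} \bar{a}(\xi_0,\eta) z^m_\eta - z^m_{\xi_0} + \gamma^m K^m z^m_{\xi_0},
\]
uniformly in $\varepsilon$. Proceeding exactly as in (\ref{e:moment})--(\ref{e:moment4}) of Lemma~\ref{L:moments}, Gronwall's lemma combined with (\ref{Gr3}) then delivers
\[
\sup_{\varepsilon > 0}\sup_{0 \leq t \leq T}\sum_{\xi \in G}\rho(\xi) \Ex[\bar{z}_{\varepsilon,0,\xi}(t)] \leq e^{CT} \sum_{\xi \in G} \rho(\xi) \Ex[\bar{x}_\xi(0)]
\]
with $C = 1 + \max_m \gamma^m K^m$ (using $z^m_{\varepsilon,\xi}(0)\le x^m_\xi(0)$).

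Next I would upgrade this to a bound on the supremum in $t$. Using the Poisson representation (\ref{e:007}) specialized to $\lambda=0$, the process $z^m_{\varepsilon,0,\xi}(t)$ decomposes into its initial value, the incoming migration, a drift piece controlled by the previous step, and a compensated jump martingale $M^\varepsilon_\xi$. The main technical hurdle is that the birth and death intensities blow up like $\gamma^m/\varepsilon$ as $\varepsilon\to 0$. The saving grace is that the jump size equals $\varepsilon$ and the birth and death rates agree up to the $O(\varepsilon)$ perturbation $\varepsilon K^m$, so in parallel with (\ref{e:qua}) one gets
\[
\langle M^\varepsilon_\xi\rangle_t \leq \gamma^m(1+\varepsilon K^m) \int_0^t z^m_{\varepsilon,0,\xi}(s)\,\d s,
\]
which is of order $O(1)$ uniformly in $\varepsilon$. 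Applying Burkholder-Davis-Gundy and Cauchy-Schwarz as in (\ref{e:Doob}), then weighting by $\rho$ and summing over $\xi$ using (\ref{Gr3}), yields a constant $\tilde{C}(T)<\infty$ independent of $\varepsilon$ with
\[
\sum_{\xi \in G} \rho(\xi) \Ex\big[\sup_{0 \leq t \leq T} \bar{z}_{\varepsilon,0,\xi}(t)\big] \leq \tilde{C}(T) \sum_{\xi \in G} \rho(\xi) \Ex[\bar{x}_\xi(0)].
\]

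Finally, setting $K_{\delta,T} := \delta^{-1}\tilde{C}(T) \sum_\xi \rho(\xi) \Ex[\bar{x}_\xi(0)]$ and
\[
A_{\delta,T} := \Big\{z \in {\mathcal E}^{\mathrm{par},G} : \sum_{\xi\in G}\rho(\xi)\bar{z}_\xi \leq K_{\delta,T}\Big\},
\]
Lemma~\ref{L:Ecompact} together with Remark~\ref{Rem:01} guarantees that $A_{\delta,T}$ is compact in ${\mathcal E}^{\mathrm{par},G}$, and Markov's inequality then yields (\ref{e:dis2a}). The principal obstacle, as noted above, is the uniform-in-$\varepsilon$ control of the martingale increments in the supercritical-branching regime, which relies precisely on the cancellation of the $\varepsilon^{-1}$-intensity against the $\varepsilon^2$-jump-size contribution to the quadratic variation, a cancellation that would fail in the presence of the nonlinear death term proportional to $\lambda$.
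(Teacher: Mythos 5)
Your proof is correct, but it follows a genuinely different route from the one the paper takes for this lemma. You transplant the strategy of Lemma~\ref{L:dis2}: a uniform-in-$\varepsilon$ bound on $\sum_{\xi}\rho(\xi)\,\Ex[\sup_{t\le T}\bar z_{\varepsilon,0,\xi}(t)]$ obtained from the Poisson representation plus Burkholder--Davis--Gundy, then Markov's inequality against the $\rho$-weighted ball, which is compact by Lemma~\ref{L:Ecompact}. Your key observation is sound and well placed: with $\lambda=0$ the quadratic variation of the compensated jump martingale is $\gamma^m(1+\varepsilon K^m)\int_0^t z^m\,\d s$, linear in the state, so the BDG step closes under the first-moment hypothesis alone --- precisely the term that forces the second-moment assumption in Lemma~\ref{L:moments}(ii) is absent. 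The paper instead argues coordinatewise: it reduces compact containment to tail bounds $\sup_{\varepsilon}\Pr(\sup_{t\le T}z^m_{\varepsilon,0,\xi}(t)\ge L)\to 0$ for each $(\xi,m)$, takes the compact set to be a product of intervals $\otimes_i[0,L_i]$, truncates the coordinate function to $W\wedge z^m_\xi$, and uses the maximal inequality $\Pr(\sup_t M(t)\ge L)\le L^{-1}\Ex[(M(T))^+]$ together with a bound on $\int_0^T(\Omega^G_{Z^{\varepsilon,0}}(W\wedge z^m_\xi))^+\,\d s$ in expectation; there $\lambda=0$ enters through the inequality $(\Omega^G_{Z^{\varepsilon,0}}(W\wedge z^m_\xi))^-\le(\Omega^G_{Z^{\varepsilon,0}}(W\wedge z^m_\xi))^+$, which lets $\Ex[(M(T))^+]$ be controlled by first moments without ever touching the quadratic variation. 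The trade-off: the paper's route is uniform over all $\varepsilon>0$ since no $\varepsilon$-dependent factor from $\langle M\rangle$ appears, whereas your constant carries the (harmless, since the lemma is applied as $\varepsilon\to0$) factor $1+\varepsilon K^m$; conversely, your argument is shorter, reuses machinery already proved, and yields the stronger quantitative statement $\sup_{\varepsilon\le 1}\Ex[\sup_{t\le T}\|Z_{\varepsilon,0}(t)\|]<\infty$ rather than only a tail bound.
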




\begin{proof}
By Lemma~\ref{L:Ecompact},
it  suffices to show that
  \begin{equation}\label{eq:dis2e}
    \begin{aligned}
      \sup_{\varepsilon>0}\Pr\Big(\sup_{0\leq t\leq T}
      z^m_{\varepsilon,0,\xi}(t) \geq L \Big) \xrightarrow{L\to\infty}
      0.
    \end{aligned}
  \end{equation}

  Indeed, assume (\ref{eq:dis2e}) and take $\delta>0$. We may also take a
  enumeration $(\xi_i)_i$ of $G$ and numbers $(L_i)_i$ with
  \begin{equation}
  \label{e:004}
     \sup_{\varepsilon>0 }\Pr\big\{\exists \, 0\le t\le T:\, x^m_{\varepsilon,\xi_i}(t) \geq L_i \big\}
     \leq
     \tfrac 1L\delta  2^{-i},
\end{equation}
 for all $i\in\N$ and $m\in\{1,2,...,M\}$.
  The set
  $A_{\delta,T}:=\otimes_{m=1}^M\otimes_{i=1}^\infty [0,L_i]$ is
  compact in the product topology. Moreover,
\be{e:moreover}
   \sup_{\varepsilon>0 }\Pr\big\{\exists 0\le t\le T:\, Z_{\varepsilon,0}(t)\notin A_{\delta,T}\big\} \leq
   \sum_{m=1}^M\sum_{i=1}^\infty \sup_{\varepsilon>0 } \Pr\big\{\exists 0\le t\le T:\, z^m_{\varepsilon,0,\xi_i}(t)
   > L_i \big\} \leq \delta.
\ee

{{}Since for each $W\in\mathbb{N}$,
  \begin{equation}\label{e:007b}
    \begin{aligned}
      M^m_{\varepsilon,0,\xi,W}:= \Big( W\wedge z^m_{\varepsilon,0,\xi}(t) -
      \int_0^t \Omega^G_{Z^{\varepsilon,0}} W\wedge z^m_{\varepsilon,0,\xi}(s)
      \mathrm{d}s\Big)_{t\geq 0}
    \end{aligned}
  \end{equation}
  is a martingale,
  \begin{equation}\label{eq:ppp134b}
    \begin{aligned}
      \sup_{\varepsilon>0}\Pr\big\{\sup_{0\leq t\leq T}
      W\wedge z^m_{\varepsilon,0,\xi}(t) \geq 2L \big\}
     &\leq
      \sup_{n\in\N}\Pr\big\{\sup_{0\leq t\leq T}
      M_{\varepsilon,0,\xi,W}^{m}(t) \geq L \big\}
      \\
     &+
       \sup_{\varepsilon>0}\Pr\big\{\sup_{0\leq t\leq T} \int_0^t
      (\Omega^G_{Z^{\varepsilon,0}} W\wedge z^m_{\varepsilon,0,\xi}(s))^+ \mathrm{d}s \geq L
      \big\}.
    \end{aligned}
  \end{equation}

We shall show separately for both terms on the right hand side to converge to
  0 as $L\to\infty$ uniformly in $W\in\mathbb{N}$. First for all $\varepsilon>0$, $W\in\mathbb{N}$
  \begin{equation}
    \label{eq:ppp135b}
    \begin{aligned}
      \Pr\big\{\sup_{0\leq t\leq T} & \int_0^t (\Omega_{Z^{\varepsilon,0}}
      z_{\varepsilon,0,\xi}^m(s))^+\mathrm{d}s \geq L \big\}
    \leq
      \tfrac 1L \Ex
      \big[ \int_0^T (\Omega^G_{Z^{\varepsilon,0}}
      W\wedge z_{\varepsilon,0,\xi}^m(s))^+\mathrm{d}s \big]
    \\
  & \leq
      \tfrac 1L \Ex\big[
      \int_0^T \big\{\big(\sum_{\eta\in G} a(\eta,\xi)
      z_{\varepsilon,0,\eta}(s)\big) + \overline{\gamma}\overline{K} z_{\varepsilon,0,\xi}(s) \mathrm{d}s\big\} \big]
    \\
  & \leq
      \tfrac {1}{L\rho(\xi)} \Ex\big[ \int_0^T\big(
      \sum_{\eta,\eta'\in G}
      a(\eta,\eta')\rho(\eta') z_{\varepsilon,0,\eta}(s)\big)+ \overline{\gamma}\overline{K} z_{\varepsilon,0,\xi}(s) \big) \mathrm{d}s
      \big]
    \\
  & \leq
      \tfrac{R+\overline{\gamma}\overline{K}}{L\rho(\xi)} \int_0^T \sum_{\eta\in G}
       \rho(\eta) \Ex[ z_{\varepsilon,0,\eta}(s)] \mathrm{d}s
    \\
  &\le
      \tfrac{(R+\overline{\gamma}\overline{K})}{L\rho(\xi)}e^{(C+1)T}
      \sum_{\eta\in G}\rho(\eta)\Ex\big[z_{\eta}^m(0)\big],
    \end{aligned}
  \end{equation}
  for $C$ as defined in (\ref{eq.gamma}) by (\ref{e:moment4}). Second, by the maximal inequality for
  martingales,
  \begin{equation}
    \label{eq:ppp136b}
    \begin{aligned}
      \Pr\big\{\sup_{0\leq t\leq T} & M^m_{\varepsilon,0,\xi,W}(t) \geq L \big\}
    \leq
      \tfrac 1L \Ex \big[ (M_{\varepsilon,0,\xi,W}^m(T))^+ \big]
      \\
    &\leq
      \tfrac 1L \Ex_0\big[z_{\varepsilon,0,\xi}^m(T)\big] + \tfrac 1L \Ex \big[
      \int_0^T (\Omega_{Z^{\varepsilon,0}} z_{\varepsilon,0,\xi}^m(s))^+\mathrm{d}s
      \big]
    \end{aligned}
\end{equation}
Here we have used that for all $a,b\in\R$, $(a-b)^+\le a^+-b^-$
and that in the special case where $\lambda\equiv 0$ (and only in this special case!), $(\Omega_{Z^{\varepsilon,0}} W\wedge z_{\xi}^m)^-\le (\Omega_{Z^{\varepsilon,0}} W\wedge z_{\xi}^m)^+$.}

  Combining \eqref{eq:ppp135b} and \eqref{eq:ppp136b} shows \eqref{eq:dis2e}.
  \end{proof}

\begin{proof}{\bf of Theorem~\ref{Tdiff}}\,
By explicit construction we can couple two finite {{}geographic space} particle systems {{}$Z^G$} and {{}$Z^{G,0}$} corresponding
respectively to the generator {{}$(\Omega^{G_L}_Z,{\mathcal D}(\Omega^{G_L}_Z))$ defined in (\ref{ag5}) and (\ref{pregenerator}) as well as
to the generator $(\Omega^{G_L}_{Z^0},{\mathcal D}(\Omega^{G_L}_{Z^0}))$ defined in (\ref{gen3a}) and (\ref{ag50})}
such that $z^m_{L,\xi}(0) = z^m_{L,0,\xi}(0)$ and $z^m_{L,\xi}(t) \leq  z^m_{L,0, \xi}(t)$ for all $t\geq 0$.
By Theorem~\ref{T:parti}(i)  
there exists limit points of such systems as $L\to\infty$ in the product topology. Hence there exist solutions to the corresponding
infinite {{}geographic space} particle systems that obey the same ordering.

Thus for any particle solution $Z^{\ve}$ to the $(\Omega^G_{Z^{\ve}},{\mathcal D}_{Z^{\ve}}(\Omega^G_{Z^{\ve}}),x(0))$-martingale problem constructed in Theorem~\ref{T:parti}
there exists a particle solution $Z_0^{\ve}$ of the  $(\Omega^G_{Z^{\ve,0}},{\mathcal D}(\Omega^G_{Z^{\ve,0}}), x(0))$-martingale problem such that again $z^m_{\ve, \xi}(t) \leq  z^m_{\ve,0, \xi}(t)$ for all $t\geq 0.$ Since by Lemma~\ref{L:dis2} the compact containment containment condition (\ref{e:dis2a}) holds for the family {{}$\{Z^{\ve,0},\ve>0\}$} we can conclude the same for the family {{}$\{Z^{\ve},\ve>0\}$}. Combining this with
Lemma \ref{L4} we can again apply Lemma 4.5.1 and Remark 4.5.2 of \cite{EthierKurtz1986} to complete
 the proof of Theorem \ref{Tdiff}.
 ~~
\end{proof}\sm



We first give the proof of Theorem  \ref{T0}.
{\em Existence} of a weak solution follows from the diffusion approximation stated in Theorem~\ref{Tdiff} together with Lemma~4.5.1 in \cite{EthierKurtz1986}.

\subsection{Moment estimation and comparison results}
\label{ss.momentcalc}
{{}Let $G$ be a countable Abelian group, and $X^G=\big\{(x^m_{\xi}(t))_{t\ge 0};\,\xi\in G,m=1,...,M\big\}$
a solution of (\ref{P:eq.007}). Recall from (\ref{Z001}) the total mass $\bar{x}_\xi$ in $\xi\in G$.
In this subsection we  state and prove some moment estimates for the total mass process.
The following implies Proposition~\ref{propn:l^pbound}.}

\begin{proposition}[Moment estimates]
Assume that the initial condition $X(0)$ has translation
    invariant total mass with $\Ex[(\bar x_\xi (0))^n] < \infty$ for all $\xi\in G$, $n\in\mathbb{N}$. Then for all $T \geq 0,$
    \be{GA1}
    \sup_{\xi \in G} \Ex \big[\sup_{0 \leq t \leq T} \big(\bar x_\xi(t)\big)^n\big] < \infty.
    \ee
    Furthermore there exists a $\delta>0$ depending on the parameters of the
    dynamics such that if $\Ex [\exp (\lambda \bar x_\xi (0))] < \infty$
    for $\lambda < \delta$  then for all $t \geq 0$:
    \be{GA2}
    \Ex \big[\exp \big(\lambda \bar x_\xi (t)\big)\big] < \infty
    \ee
    for all $\lambda < \delta$.
\label{PMom}
\end{proposition}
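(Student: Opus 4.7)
The plan is to apply It\^o's formula to suitable test functions of the total mass $\bar x_\xi(t)$ and exploit three structural features of the model: (a) translation invariance of $\bar X$ is preserved by the dynamics whenever it holds initially; (b) consequently the migration contribution is non-positive under expectation by a Young-type inequality; and (c) the self-regulation mechanism provides a negative quadratic drift $-\sum_{m,n}\gamma^m\lambda_{m,n}x_\xi^mx_\xi^n\le -c\bar x_\xi^2$ on top of the linear supercritical part, which is what makes exponential moments finite.

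For (\ref{GA1}), I would sum the SDEs (\ref{P:eq.007}) over $m$ to obtain the dynamics of $\bar x_\xi(t)$ and apply It\^o's formula to $(\bar x_\xi)^n$. Dropping the non-positive self-regulation term gives an upper bound by a supercritical branching drift $\bar\gamma\bar K\bar x_\xi$, while the second-order term is bounded by $\tfrac{1}{2}\bar\gamma\bar x_\xi$. Setting $f_n(t):=\Ex[(\bar x_\xi(t))^n]$ (which is independent of $\xi$ by translation invariance) and using Young's inequality $(\bar x_\xi)^{n-1}\bar x_\eta\le \tfrac{n-1}{n}(\bar x_\xi)^n+\tfrac{1}{n}(\bar x_\eta)^n$ to eliminate the migration terms, one obtains
\[
   \tfrac{\mathrm{d}}{\mathrm{d}t}f_n(t)\le n\bar\gamma\bar K\,f_n(t)+\tfrac{n(n-1)}{2}\bar\gamma\,f_{n-1}(t).
\]
Gr\"onwall's inequality together with induction on $n$ gives $f_n(t)\le C(n,T)$ on $[0,T]$, and a Burkholder--Davis--Gundy estimate on the martingale part of $(\bar x_\xi(t))^n$ upgrades this to a bound on the supremum, proving (\ref{GA1}).

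For (\ref{GA2}), I would retain the self-regulation term, which adds $-cn\,\Ex[(\bar x_\xi)^{n+1}]$ on the right-hand side of the above ODE inequality. Jensen's inequality yields $\Ex[(\bar x_\xi)^{n+1}]\ge f_n(t)^{1+1/n}$, so the inequality becomes coercive and forces an $n$-uniform bound $f_n(t)\le R^n n!$ for a constant $R$ depending only on the dynamical parameters---provided the bound holds at $t=0$, which is precisely the hypothesis $\Ex[\exp(\lambda\bar x_\xi(0))]<\infty$ for $\lambda<\delta:=1/R$. Summing the Taylor series $\Ex[\exp(\lambda'\bar x_\xi(t))]=\sum_{n\ge 0}(\lambda')^n f_n(t)/n!$ then yields (\ref{GA2}) for every $\lambda'<\delta$.

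The main obstacle will be making the It\^o manipulations rigorous prior to knowing the moment bounds one is trying to prove: this is handled by the standard truncation with the stopping time $\tau_N:=\inf\{t\ge 0:\bar x_\xi(t)>N\}$, establishing the estimates for the stopped process, and letting $N\to\infty$ by Fatou's lemma. A secondary subtlety is the positivity of the quadratic form $x\mapsto\sum_{m,n}\gamma^m\lambda_{m,n}x^mx^n$ needed in (ii) to produce the negative quadratic drift in $\bar x_\xi$; this is immediate with $c=\gamma\lambda$ in the exchangeable case and more generally requires the coefficient matrix $(\gamma^m\lambda_{m,n})_{m,n}$ to be strictly positive definite.
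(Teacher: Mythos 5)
Your treatment of (\ref{GA1}) is essentially the paper's own argument: It\^o's formula on $n$-th powers, translation invariance combined with H\"older/Young to make the migration term non-positive in expectation, a linear upper bound on the logistic drift, induction on $n$, and a Burkholder--Davis--Gundy estimate for the supremum. The only cosmetic difference is that you work directly with $\bar x_\xi$, while the paper argues type-by-type with $x^m_\xi$ after dominating the drift $x^m\big(K^m-\sum_n\lambda_{m,n}x^n\big)$ by an affine function $\theta^m-c^m x^m$ chosen once and for all (which is how it avoids any assumption on the off-diagonal $\lambda_{m,n}$).

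For (\ref{GA2}) you genuinely diverge. The paper discards the competition term, observes that the resulting recursion $\Ex[x(t)^n]\le \Ex[x(0)^n]+\overline{\gamma}(2\overline{\theta}+1)\binom{n}{2}\int_0^t\Ex[x(s)^{n-1}]\,\d s$ is, with equality, the moment recursion of a Feller branching diffusion, and invokes the known exponential moments of that process. You instead keep the quadratic death term, use Jensen to make the moment hierarchy coercive, and propagate the barrier $f_n(t)\le R^n n!$. This buys and costs something. The cost: you need the coercivity $\sum_{m,n}\gamma^m\lambda_{m,n}x^m x^n\ge c\,\bar x^2$ on the positive orthant, which you correctly flag and which is stronger than the paper's implicit requirement $\lambda_{m,m}>0$. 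The gain: a critical Feller diffusion (with or without immigration) satisfies $\Ex[e^{\lambda X_t}]<\infty$ only for $\lambda$ below a threshold that decays like $1/t$, so the paper's comparison as written does not obviously produce a $\delta$ independent of $t$, whereas your invariant-region argument (the barrier is a supersolution once $n$ exceeds some $n_0(R)$, and the finitely many remaining moments are finite by part (i)) is built to deliver exactly the $t$-uniform threshold claimed in the proposition. Two small repairs: the localization should use a stopping time controlling the weighted norm $\sum_\xi\rho(\xi)\bar x_\xi(t)$ rather than a single component, since stopping $\bar x_\xi$ alone does not control the migration input $\sum_\eta a(\xi,\eta)\bar x_\eta$; and you should verify the barrier inequality explicitly for the low-order moments $n<n_0(R)$, where the constant will depend on $t$ through part (i) --- harmless, since (\ref{GA2}) is asserted for each fixed $t$ rather than uniformly.
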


\begin{remark}
Note that this means that all mixed moments of the process
are measure-determining provided some exponential moment exists initially.
This implies for our purposes that we can proceed via moment calculations
since we can by truncation always achieve approximating processes for which
the assumptions are satisfied.
\hfill$\qed$
\label{Rem:02}
\end{remark}

\begin{proof}{\bf of Proposition \ref{PMom}}\;
We choose $(\theta^m,c^m)_{m=1,\dots,M}$ in such a way that
\be{eq.suchthat}
  \theta^m-c^m a_m> K^m a_m - \lambda_{m,m} a_m^2,
\ee
for any $(a_{m})_{m=1,\dots,M}\in \R_{+}^M$ and all $m\in\{1,...,M\}$.
    Hence
    \be{S:eq.0001}
        \theta^m-c^m a_m> a_m \left(K^m-\sum_{n=1}^M \lambda_{m,n} a_n\right),\quad m=1,\dots,M.
    \ee
    Then by It\^o's formula for each $m \in \{1,\dots, M\},$ and $\xi\in G$,
    \be{Z038}
    \begin{aligned}
        \big(x^m_\xi(t)\big)^n
        &= \big(x^m_\xi(0)\big)^n + \int_0^t \sum_{\eta\in G}{{}a}(\xi,\eta)  \big(x^m_{\eta}(s)-x^m_{\xi}(s)\big) n \big(x^m_{\xi}(s)\big)^{n-1} \d s\\
        &\;  + \gamma^m \int_0^t x^m_\xi(s)\bigg(K^m-\sum_{k=1}^M \lambda_{m,k}x^k_\xi(s)\bigg) n \big(x^m_\xi(s)\big)^{n-1}  \d s \\
        &\;  + \frac{\gamma^m}{2} n(n-1) \int_0^t \big(x^m_\xi(s)\big)^{n-1} \d s +   \int_0^t \sqrt{\gamma^m x^m_{\xi}(s)}\, n  \big(x^m_{\xi}(s)\big)^{n-1} \d w_\xi(s) \\
        &\le \big(x^m_\xi(0)\big)^n + \int_0^t \sum_{\eta\in G}{{}a}(\xi,\eta)  \big(x^m_{\eta}(s)-x^m_{\xi}(s)\big) n \big(x^m_{\xi}(s)\big)^{n-1} \d s\\
        &\;+ \gamma^m \int_0^t \big(\theta^m- c^m x^m_\xi(s)\big) n \big(x^m_\xi(s)\big)^{n-1}  \d s \\
        &\,+ \tfrac{\gamma^m}{2} n(n-1) \int_0^t \big(x^m_\xi(s)\big)^{n-1} \d s
     + \int_0^t \sqrt{\gamma^m x^m_{\xi}(s)}\, n  \big(x^m_{\xi}(s)\big)^{n-1} \d w_\xi(s).
    \end{aligned}
    \ee

    Due to translation invariance the distribution of $x^m_{\xi}(t)$ is identical to the distribution of $x^m_{\eta}(t)$ for any {{}$\xi,\eta \in G$, and $m=1,...,M$}. An application of H\"older's inequality implies therefore that
    $\Ex\big[x^m_\eta(t) \big(x^m_{\xi}(t)\big)^{n-1}\big]\leq \Ex\big[\big(x^m_{\xi}(t)\big)^{n}\big]$.
    Hence,
    \be{Z039}
    \begin{aligned}
        \Ex\big[\big(x^m_\xi(t)\big)^n\big]
        &\leq \Ex\big[\big(x^m_\xi(0)\big)^n\big] + \int_0^t \Ex\Big[ \gamma^m \big(\theta^m- c^m x^m_\xi(s)\big)  n \big(x^m_\xi(s)\big)^{n-1}\Big] \d s \\
        & + \frac{\gamma^m}{2} n(n-1) \int_0^t \Ex\big[ \big(x^m_\xi(s)\big)^{n-1}\big] \d s.
    \end{aligned}
    \ee

    Due to the positivity we obtain
    \be{Z040}
        \Ex\big[\big(x^m_\xi(t)\big)^n\big]  \leq \Ex\big[\big(x^m_\xi(0)\big)^n\big] + \gamma^m \big(n \theta^m + \binom{n}{2}\big)\int_{0}^{t}\Ex\big[\big(x^m_\xi(s)\big)^{n-1}\big] \d s.
    \ee

    Hence, again by translation invariance and the moment conditions at time $t=0,$ we obtain that for
    any $T>0$, and $n \in \N$,
    \be{indstepmoments}
        \sup_{\xi \in G} \sup_{0\leq t \leq T}  \Ex\left[\big(x^m_\xi(t)\big)^{n-1}\right]< \infty
        \quad \Rightarrow \quad \sup_{\xi \in G} \sup_{0\leq t \leq T}  \Ex\left[\big(x^m_\xi(t)\big)^{n}\right]< \infty
    \ee
    provided that $\Ex\left[(x^m_\xi(0))^{n}\right]< \infty.$
    But  since
    \be{Z041}
        \frac{\d}{\d t}\Ex\left[x^m_\xi(t)\right]
        = \gamma^m \Ex\left[x_\xi^m(t)\left(K^m-\sum_{k=1}^M \lambda_{m,k}x^k_\xi(t)\right)\right] < \gamma^m \left(\theta^m -  c^m \Ex[x^m_\xi(t)]\right),
    \ee
    we have $\Ex\big[x^m_\xi(t)\big]<u(t)$ where $u(t)$ is the solution to
    \be{Z042}
        \frac{\d}{\d t}u(t)
        = \gamma^m \big(\theta^m -  c^m u(t)\big),\quad u(0)=\Ex\big[x^m_\xi(0)\big],
    \ee
    which is
    \be{Z043}
        u(t) = \frac{\theta^m}{c^m} + \left(\Ex[x^m_\xi(0)] - \frac{\theta^m}{c^m} \right) e^{-\gamma^m c^m t}.
    \ee
    Hence
    \be{Z044}
        \Ex\big[x^m_\xi(t)\big]<\frac{\theta^m}{c^m} + \left(\Ex\big[x^m_\xi(0)\big] - \frac{\theta^m}{c^m} \right) e^{-\gamma^m c^m t}.
    \ee
    It now follows from (\ref{indstepmoments}) by induction on $n$ that
    \be{supE}
        \sup_{\xi \in G} \sup_{0\leq t \leq T}  \Ex\big[\big(x_\xi^m(t)\big)^{n}\big]< \infty
    \ee
    if $ \sup_{\xi \in G} \Ex[\bar{x}_\xi(0)]< \infty.$

    In order to improve (\ref{supE}) we observe that by
    Jensen's inequality, for some constant $c(n, T),$ and all
    $0 \leq t \leq T,$
    \be{Z045}
    \begin{aligned}
        \big(x^m_\xi(t)\big)^n
        &\leq c(n, T) \Bigg(  \big(x^m_\xi(0)\big)^n + \int_0^t \sum_{\eta\in G} a(\xi,\eta) \big|x^m_{\eta}(s)-x^m_{\xi}(s)\big|^n \d s\\
        & \phantom{AAAAAA} + \int_0^t \gamma^m \big|\theta^m- c^m x^m_\xi(s)\big|^n   \d s + \left|\int_0^t  \sqrt{\gamma^m x^m_{\xi}(s)} \, \d w_\xi(s) \right|^n \Bigg),
    \end{aligned}
    \ee
    By Burkholder's inequality, translation invariance and using the bound $|a-b|^n\leq (2a)^n + (2b)^n$ for $a,b\geq 0$ it now follows that
    \be{Z046}
    \begin{aligned}
        \sup_{\xi \in G}\Ex\bigg[  \sup_{0 \leq t \leq T} \big(x^m_\xi(t)\big)^n\bigg]
        &\leq c(n, T) \Bigg( \sup_{\xi \in G}  \Ex\big[ \big(x^m_\xi(0)\big)^n\big]  \\
        & \quad +\big(2^{n+1}+ (2 \gamma^m c_m)^n\big) \int_0^T \sup_{\xi \in G}\Ex\big[ \big(x^m_\xi(s)\big)^n\big]\,   \d s\\
        & \quad + (2 \gamma^m \theta^m)^n T+ \sup_{\xi \in G}\Ex\left[ \left(\int_0^T  \gamma^m x^m_{\xi}(s)\,  \d s \right)^{n} \right]^{\frac{1}{2}} \Bigg) .
    \end{aligned}
    \ee
    Combining this with (\ref{supE}) implies (\ref{GA1}).
    For the exponential moments use that an exponential moment exists if this is
    true for the non-spatial part.
     Notice that (\ref{Z040}) implies that in the non-spatial case,
      \be{Z0400}
        \Ex\big[\big(x(t)\big)^n\big]  \leq \Ex\big[\big(x(0)\big)^n\big] + \overline{\gamma}\big(2\overline{\theta}+1\big) \binom{n}{2}\int_{0}^{t}\Ex\big[\big(x(s)\big)^{n-1}\big] \d s.
    \ee
    Notice that if we replace $\leq$ by $=$, we obtain the moments of the Feller branching diffusion with branching rate $\overline{\gamma}(2\overline{\theta}+1)$, which is known to have exponential moments for suitably small exponents.
 \end{proof}

{{}We next focus on the exchangeable model only. }
\begin{proposition}\label{PMom-exch}
    In the exchangeable case we also have for all $n \in \N$ and $0<s\leq T<\infty$
    \be{S-exch1}
    \sup_{\xi \in G}\sup_{t\ge s} \Ex \big[\big(\bar x_\xi(t)\big)^n\big] < \infty,\quad \sup_{\xi \in G} \Ex \bigg[\sup_{s \leq t\leq T } \big(\bar x_\xi(t)\big)^n\bigg] < \infty,
    \ee
    and in particular if $\bar{X}$ is started from an initial condition $\bar{X}(0)$ bounded above by
    a translation invariant $\bar{X}^{inv}(0)$
    with $\Ex[(\bar x^{inv}_\xi (0))^n] < \infty$ 
    then (\ref{GA1}) holds and so does
    \be{S-exch2}
    \sup_{\xi \in G} \sup_{t\ge 0}\Ex \big[ \big(\bar x_\xi(t)\big)^n\big] < \infty.
    \ee
\end{proposition}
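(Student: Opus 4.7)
The plan is to reduce to a single-type problem and exploit the quadratic restoring drift inherent to the exchangeable model. Summing the SDEs (\ref{P:eq.007}) over the type index $m$ and collapsing the independent noises via quadratic variation yields the autonomous single-type spatial logistic branching diffusion for $\bar{X}=(\bar{x}_\xi)_{\xi\in G}$:
\begin{equation*}
\mathrm{d}\bar{x}_\xi(t)=\sum_{\eta\in G}\bar{a}(\xi,\eta)\big(\bar{x}_\eta(t)-\bar{x}_\xi(t)\big)\,\mathrm{d}t+\gamma\bar{x}_\xi(t)\big(K-\lambda\bar{x}_\xi(t)\big)\,\mathrm{d}t+\sqrt{\gamma\bar{x}_\xi(t)}\,\mathrm{d}\bar{w}_\xi(t),
\end{equation*}
for independent standard Brownian motions $\bar{w}_\xi$. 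All subsequent work takes place at this level.

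For the domination-based statements (\ref{GA1}) and (\ref{S-exch2}), I would first construct a pathwise monotone coupling realizing $\bar{x}_\xi(t)\le\bar{x}^{inv}_\xi(t)$ simultaneously for all $\xi$ and $t$. This can be accomplished by a Yamada--Watanabe type comparison adapted to the square-root diffusion coefficient, or more directly by coupling the driving Poisson noise in the particle approximations of Theorem~\ref{T:parti} and passing to the diffusion limit via Theorem~\ref{Tdiff}. With this coupling in place, (\ref{GA1}) is immediate from Proposition~\ref{PMom} applied to $\bar{X}^{inv}$. To obtain the uniform-in-time bound (\ref{S-exch2}), I would sharpen the moment ODE in the proof of Proposition~\ref{PMom} by retaining the damping term: choose $\theta,c>0$ with $a(K-\lambda a)\le\theta-ca$ for all $a\ge 0$, apply It\^o to $(\bar{x}^{inv}_\xi)^n$, and use translation invariance of $\bar{X}^{inv}$ combined with H\"older ($\Ex[\bar{x}^{inv}_\eta(\bar{x}^{inv}_\xi)^{n-1}]\le\Ex[(\bar{x}^{inv}_\xi)^n]$) to make the migration contribution nonpositive. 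This yields the linear differential inequality $\dot y_n(t)\le-n\gamma c\,y_n(t)+C_n y_{n-1}(t)$ with $y_n(t):=\Ex[(\bar{x}^{inv}_\xi(t))^n]$, from which $\sup_{t\ge 0}y_n(t)<\infty$ follows by induction on $n$; the bound on $\bar{X}$ then follows by the comparison.

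For the smoothing bound (\ref{S-exch1}) with no moment assumption on $\bar{X}(0)$, the plan is to combine a nonlinear Lyapunov estimate with shift invariance of the dynamics. Define the weighted moment $V_n(\bar{x}):=\sum_{\xi\in G}\rho(\xi)\bar{x}_\xi^n$ and apply It\^o: Young's inequality together with (\ref{Gr3}) bounds the migration contribution by $CV_n$, and H\"older gives $V_{n+1}(\bar{x})\ge V_n(\bar{x})^{(n+1)/n}\big/\big(\sum_\xi\rho(\xi)\big)^{1/n}$. Writing $v_n(t):=\Ex[V_n(\bar{X}(t))]$ and applying Jensen in expectation to the negative nonlinear term, one obtains the autonomous differential inequality
\begin{equation*}
\dot v_n(t)\le C_n v_n(t)-c_n v_n(t)^{(n+1)/n}+C'_n v_{n-1}(t).
\end{equation*}
Induction on $n$ combined with comparison to the ODE $\dot z=-c_n z^{(n+1)/n}$, whose solutions satisfy $z(t)\le(c_n t/n)^{-n}$ independently of $z(0)\in[0,\infty]$, yields $\sup_{t\ge s}v_n(t)\le C^*_n(s)<\infty$ uniformly in the initial configuration $\bar{X}(0)\in\mathcal{E}^G$. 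Finally, since $\bar{a}$ is translation invariant by (\ref{sg1})--(\ref{sg2}), the dynamics commutes with the shift $\tau_\zeta$ on $G$, so $\bar{x}_\zeta(t)$ has the same law as the component at the origin of the process started from $\tau_\zeta\bar{X}(0)$; in particular $\rho(0)\Ex[\bar{x}_\zeta(t)^n]\le v_n(t;\tau_\zeta\bar{X}(0))\le C^*_n(s)$ uniformly in $\zeta\in G$ for all $t\ge s$. The sup-over-paths version then follows from a Burkholder--Davis--Gundy argument analogous to (\ref{Z046}).

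The main obstacle is upgrading the $\ell^1(\rho)$-weighted moment bound from the Lyapunov argument to a site-wise uniform bound when $\bar{X}(0)$ may fail to have any finite higher moments. The Lyapunov estimate by itself only delivers $\Ex[\bar{x}_\xi^n]\le v_n(t)/\rho(\xi)$, which explodes as $\rho(\xi)\to 0$. The resolution rests on two features acting together: shift invariance of the SDE, which lets one re-center the Lyapunov function at each site, and the fact that the nonlinear comparison bound on $v_n$ is genuinely independent of the initial value $v_n(0)$ and therefore insensitive to the (possibly very large) weighted moments of the shifted initial condition. Both ingredients rely crucially on the superlinear pushdown $-\gamma\lambda\bar{x}_\xi^2$ in the drift, which is why the argument does not extend to the degenerate case $\lambda=0$.
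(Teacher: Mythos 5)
Your treatment of (\ref{GA1}) and (\ref{S-exch2}) is essentially the paper's: comparison in the initial condition (Proposition \ref{monotonicity}) reduces to the translation invariant $\bar{X}^{inv}$, and then the moment computation of Proposition \ref{PMom} (you sharpen it by keeping the damping term $-c\,\Ex[(\bar x_\xi)^n]$ rather than, as the paper does, combining (\ref{GA1}) on $[0,s]$ with the first part of (\ref{S-exch1}) on $[s,\infty)$; both are fine). The genuinely different part is (\ref{S-exch1}): the paper does not use a weighted Lyapunov functional at all, but invokes Theorem 2 of \cite{HW07}, which provides a translation invariant \emph{maximal} solution $\bar X^{(\infty)}$ dominating every solution stochastically and having finite first moments at all positive times; higher moments are then bootstrapped by integrating the It\^o identity in time (the $-\gamma n\lambda\,\Ex[(\bar x^{(\infty)})^{n+1}]$ term makes $\int_s^t\Ex[(\bar x^{(\infty)}(u))^{n+1}]\,\d u$ finite) and using that $\bar X^{(\infty)}$ is stochastically decreasing in $t$; site-uniformity is automatic from translation invariance, and the pathwise supremum is recovered by Burkholder, exactly as in your last step.

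The gap in your route is the step where you conclude $v_n(t)\le C^*_n(s)$ for $t\ge s$ ``independently of $v_n(0)\in[0,\infty]$'' and then shift. First, a minor point: $\tau_\zeta\bar X(0)$ need not lie in ${\mathcal E}^G$, since $\rho$ is not translation invariant; the correct move is to shift the weight, $\rho_\zeta(\cdot):=\rho(\cdot-\zeta)$, which satisfies (\ref{Gr3}) with the same constant and has the same total mass, so the constants are uniform in $\zeta$. But then the serious problem appears: with respect to $\rho_\zeta$ even the \emph{first} weighted moment of $\bar X(0)$ may be infinite, and nothing in your argument shows that $v_n^{(\zeta)}(t)$ is finite for any $t>0$. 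The ODE comparison with $\dot z=-c_nz^{(n+1)/n}$ can only be applied to a quantity already known to be finite (and to satisfy the differential inequality, which itself requires localization of the martingale part and integrability you do not address); the assertion that the bound is ``insensitive to the possibly very large initial value'' is precisely the coming-down-from-infinity statement that has to be \emph{proved}, and linear (first-moment) estimates cannot supply the needed finiteness because the statement is false for $\lambda=0$. To close this one needs an approximation from above: e.g.\ truncate to $\bar X(0)\wedge K$, dominate by the constant configuration $K$ (translation invariant, all moments finite, so Proposition \ref{PMom} applies and all $v^{(\zeta)}_n$ are finite), run your differential inequality to get bounds uniform in $K$ and $\zeta$, and then pass to the limit $K\to\infty$ using monotonicity in the initial condition together with an identification of the monotone limit with the solution started from $\bar X(0)$ (via uniqueness in the exchangeable case). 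None of this machinery is in the proposal, and without it the central claim is unsupported; the paper's citation of \cite{HW07} is exactly the device that replaces it.
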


We will also need a comparison result for solutions to the total mass process in the exchangeable case
or equivalently the one type model with immigration. We will also apply the result to the $\alpha$ process in (\ref{e:thedual2}). We therefore state the comparison result in some generality.

\begin{proposition}\label{monotonicity}
Let $Y^{(i)}$ for $i=1,2$ be two stochastic processes with values in ${\mathcal E}$ such that
$y^{m (1)}_{\xi}(t) \leq y^{m (2)}_{\xi}(t)$ for all $\xi \in G, m=1, \dots, M$ and $t\ge 0$.
Let $f^{(i)}: {\cal E} \to {\cal E}$ be continuous functions for $i=1,2$ and assume that
for $x^{(1)}, x^{(2)} \in (\R_+)^{G\times \{1, \dots, M\}}$
\begin{eqnarray}
\label{compassump}
& &\sum_{\xi \in G}  \sum_{m=1}^M \Big( f^{m (1)}_{\xi}(x^{(1)}) - f^{m(2)}_{\xi}( x^{(2)} ) \Big)
1_{\big\{ x^{m (1)}_\xi-x^{m (2)}_\xi>0\big\} }\rho(\xi)\\
\nonumber
&\leq& c  \sum_{\xi \in G}  \sum_{m=1}^M
\big(x^{m (1)}_\xi-x^{m (2)}_\xi\big)1_{\big\{ x^{m (1)}_\xi-x^{m (2)}_\xi>0\big\} }\rho(\xi).
\end{eqnarray}
Suppose that  $x^{(i)}$ take values in ${\mathcal E}$ for $i=1,2$ and
are solutions to
\begin{equation}
\label{e:comp}
   \d x_{\xi}^{m (i)}(t)
 = f^{m (i)}\big(x^{(i)}(t)\big) \d t
   +\sqrt{\gamma^m x_{\xi}^{m (i)}(t)}\,\d w_{\xi}^m(t)+ y^{m (i)}_{\xi}(t)\,\d t
\end{equation}
with respect to the same family of independent Brownian
motions $\{w_{\xi}^m; \xi\in G, m=1, \dots, M\}$ and such that $x^{m (1)}_{\xi}(0) \leq x^{m (2)}_{\xi}(0)$ for all $\xi \in G.$ Then, we have that $x^{m (1)}_{\xi}(t) \leq x^{m (2)}_{\xi}(t)$ a.s.  for all $t \geq 0$ and $\xi \in G, m=1, \dots, M.$
\end{proposition}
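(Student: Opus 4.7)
The plan is to apply Tanaka's formula to the positive parts of the pairwise differences $D^m_\xi(t) := x^{m(1)}_\xi(t) - x^{m(2)}_\xi(t)$, use the Yamada--Watanabe estimate to kill the local time at zero (exploiting the $\sqrt{\cdot}$ form of the diffusion coefficient), and then close the argument by Gronwall's inequality applied to the $\rho$-weighted quantity $u(t) := \sum_{m,\xi} \rho(\xi)\,\E\bigl[D^m_\xi(t)^+\bigr]$, which is finite because both $x^{(i)}$ take values in $\mathcal{E}$.

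First I would write the semimartingale decomposition, using that the processes are driven by the same Brownian motions,
\begin{equation*}
   dD^m_\xi(t) = \bigl(f^{m(1)}_\xi(x^{(1)}(t)) - f^{m(2)}_\xi(x^{(2)}(t))\bigr) dt + \bigl(y^{m(1)}_\xi(t) - y^{m(2)}_\xi(t)\bigr) dt + \sqrt{\gamma^m}\bigl(\sqrt{x^{m(1)}_\xi(t)} - \sqrt{x^{m(2)}_\xi(t)}\bigr) dw^m_\xi(t).
\end{equation*}
Approximating $z\mapsto z^+$ by $C^2$ functions $\varphi_n$ with $0\le\varphi_n'\le 1$ and $0\le \varphi_n''(z)\,z \le 2/n$ (Yamada--Watanabe), and using the elementary bound $(\sqrt a - \sqrt b)^2 \le |a-b|$ for $a,b\ge 0$, the It\^o correction is bounded by $\gamma^m t/n \to 0$. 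After a localisation $\tau_n := \inf\{t : \|x^{(1)}(t)\| + \|x^{(2)}(t)\| \ge n\}$ to ensure the stochastic integral is a true martingale and dropping the non-positive immigration contribution (using $y^{(1)} \le y^{(2)}$), one arrives, on passing $n\to\infty$, at
\begin{equation*}
   \E\bigl[D^m_\xi(t)^+\bigr] \le \int_0^t \E\Bigl[\mathbf{1}_{\{D^m_\xi(s)>0\}}\bigl(f^{m(1)}_\xi(x^{(1)}(s)) - f^{m(2)}_\xi(x^{(2)}(s))\bigr)\Bigr] ds.
\end{equation*}
Multiplying by $\rho(\xi)$, summing over $(m,\xi)$, interchanging expectation and summation by Fubini, and then invoking hypothesis (\ref{compassump}) --- whose form is tailored exactly to this sum --- yields $u(t) \le c\int_0^t u(s)\,ds$, so Gronwall forces $u\equiv 0$, hence $x^{m(1)}_\xi(t)\le x^{m(2)}_\xi(t)$ a.s.\ for each $(m,\xi,t)$; path-continuity promotes this to simultaneous ordering.

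The main obstacle will be technical bookkeeping: first, justifying that the Yamada--Watanabe step applied coordinate-wise actually eliminates the local time in expectation (which is standard once one has integrability of $\sqrt{\gamma^m x^{m(i)}_\xi}$ on $[0,T\wedge\tau_n]$); and second, verifying the Fubini interchange needed to sum over the infinite geographic space --- this reduces to the uniform integrability of $\sum_\xi \rho(\xi)\bar x^{(i)}_\xi(t)$ on compact time intervals, which is provided by the first-moment estimates established in the proof of Theorem~\ref{T:parti} (cf.\ Lemma~\ref{L:moments}) and Proposition~\ref{PMom}, together with the continuity hypothesis on the $f^{(i)}$ and the assumption $x^{(i)}\in \mathcal{E}^G$.
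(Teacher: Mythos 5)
Your proposal is correct and follows essentially the same route as the paper's own proof: a Yamada--Watanabe smoothing of $z\mapsto z^{+}$ applied to the coordinatewise differences, cancellation of the It\^o correction via $(\sqrt a-\sqrt b)^2\le|a-b|$, localisation by a stopping time controlling the $\rho$-weighted norm, discarding the nonpositive immigration term, and then summing against $\rho(\xi)$ so that hypothesis (\ref{compassump}) feeds directly into Gronwall's lemma. The only cosmetic difference is that the paper closes the localisation with Fatou's lemma as $T_N\uparrow\infty$ rather than invoking path continuity at the end.
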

\bi
\noi
{\bf Remark} \quad
Let $M=1.$ Condition (\ref{compassump}) is satisfied if for $i=1,2$
\begin{equation}
f^{(i)}(x)=  \sum_{\eta\in G}a(\xi,\eta)\big(x_\eta-x_{\xi}\big)+\gamma x_{\xi} \big(K-\lambda x_{\xi}\big)
\end{equation}
This is due to (\ref{Gr3}) and the fact that we are considering $x^{(i)} \in (\R_+)^G.$

\bi

\begin{proof}{\bf of Proposition \ref{PMom-exch}}\;
    From Theorem 2 of \cite{HW07} we know that there exists a translation invariant maximal process $\bar{X}^{(\infty)}=(\bar{x}_{\xi}^{(\infty)}(t))_{\xi \in G, t >0},$ also a solution to
    (\ref{P:eq.007}) for $M=1$ such that
    for all $t >0$ and $\xi \in G,$ $\bar{x}_{\xi}(t)$ is stochastically smaller than $\bar{x}_{\xi}^{(\infty)}(t).$ In order to prove the first part of
    (\ref{S-exch1}) it therefore suffices to consider the process $\bar{X}^{(\infty)}$
    which decreases stochastically as $t \rightarrow \infty$ and which satisfies $\Ex[\bar{x}_{\xi}^{(\infty)}(t)]<\infty$ for any $t>0, \xi \in G,$ again by Theorem 2 of
    \cite{HW07}. Due to the translation invariance this implies (\ref{S-exch1}) immediately for $n=1.$ For $n\geq 1,$ we proceed by induction. We calculate with It\^o's formula,
    \be{Z053}
    \begin{aligned}
        \d (\bar{x}_\xi^{(\infty)}(t))^n
        &= \sum_{\eta\in G}a(\xi,\eta) \big(\bar{x}_{\eta}^{(\infty)}(t)-\bar{x}_{\xi}^{(\infty)}(t)\big) n \big(\bar{x}_{\xi}^{(\infty)}(t)\big)^{n-1} \d t\\
        & \quad + \gamma n \big(\bar{x}_\xi^{(\infty)}(t)\big)^{n} \big( K- \lambda   \bar{x}_{\xi}^{(\infty)}(t)\big)\,  \d t +\frac{\gamma}{2} n(n-1) \big(\bar{x}_\xi^{(\infty)}(t)\big)^{(n-1)}   \d t\\
        & \quad + \sqrt{\gamma \bar{x}_{\xi}^{(\infty)}(t)}\, n  \big(\bar{x}_{\xi}^{(\infty)}(t)\big)^{n-1} \d w_\xi(t).
    \end{aligned}
    \ee
    Due to translation invariance  the distribution of $\bar{x}_{\xi}^{(\infty)}(t)$ is identical to the distribution of
    $\bar{x}_{\eta}^{(\infty)}(t)$ for any $\eta \in G.$ As before, an application of H\"older's inequality implies therefore that $\Ex\big[(\bar{x}_\eta^{(\infty)}(t) (\bar{x}_{\xi}^{(\infty)}(t))^{n-1}\big]\leq \Ex\big[(\bar{x}_{\xi}^{(\infty)}(t))^{n}\big].$
    Hence,
    \be{Z054}
    \begin{aligned}
        \d \Ex\big[(\bar{x}_\xi^{(\infty)}(t))^n\big]
        &\leq \gamma n K \Ex\big[(\bar{x}_\xi^{(\infty)}(t))^{n}\big]  \d t -\gamma n  \lambda \Ex\big[(\bar{x}_\xi^{(\infty)}(t))^{n+1}\big]   \d t\\
        & \quad  +\frac{\gamma}{2} n(n-1) \Ex\big[ (\bar{x}_\xi^{(\infty)}(t))^{(n-1)}\big]   \d t
    \end{aligned}
    \ee
    or, more precisely, for $0 \leq s \leq t<\infty,$
    \be{Z055}
    \begin{aligned}
        & \int_s^t \Ex\big[(\bar{x}_\xi^{(\infty)}(u))^{n+1}\big] \d u \\
        &\leq \frac{1}{\gamma n  \lambda} \left( \Ex\big[(\bar{x}_\xi^{(\infty)}(s))^n\big] - \Ex\big[(\bar{x}_\xi^{(\infty)}(t))^n\big]  \right) + \frac{K}{\lambda} \int_s^t \Ex\big[(\bar{x}_\xi^{(\infty)}(u))^{n}\big]  \d u \\
        & \quad +\frac{n-1}{2 \lambda} \int_s^t \Ex\big[ (\bar{x}_\xi^{(\infty)}(u))^{(n-1)}\big]   \d u
    \end{aligned}
    \ee
    Thus, if (\ref{S-exch1}) is true for $n$ and  $n-1$
    then the left hand side is finite.
    Using that $u \mapsto  (\bar{x}_\xi^{(\infty)}(u))^{n+1}$ is stochastically decreasing thus implying that
    $u \mapsto  \Ex[(\bar{x}_\xi^{(\infty)}(u))^{n+1}]$ is decreasing as well as the translation invariance of $X^{(\infty)}$ the first statement of (\ref{S-exch1}) now follows for $n+1.$ For the second statement of  (\ref{S-exch1})  we use that due to the positivity of the solutions and  H\"older's inequality there exists a constant $c=c(n,T)$ such that for
    $0 \leq s \leq T< \infty,$
    \be{Z056}
    \begin{aligned}
        & \Ex \Big[\sup_{s \leq t\leq T } (\bar x_\xi(t))^n\Big] \\
        &\leq c\Bigg( \Ex [(\bar x_\xi(s))^n] + 2^n  \sum_{\eta\in G} a(\xi,\eta) \int_s^T \Ex\big[(\bar{x}_\eta(t))^n\big] \d t \\
        & \quad + \big(2^n+(\gamma K)^n\big) \int_s^T \Ex\big[(\bar{x}_\xi(t))^n\big] \d t +   \gamma^{\frac{n}{2}} \bigg(\int_s^T \Ex[(\bar{x}_\xi(t))^{n}]\, \d t \bigg)^{\frac{1}{2}}\Bigg).
    \end{aligned}
    \ee
    where we have also used that by  Burkholder's inequality
    \be{Z057}
    \begin{aligned}
        \Ex\Bigg[\sup_{s \leq t\leq T } \bigg(\int_s^t \sqrt{\bar{x}^m_\xi(u)}\,  \d w_\xi(u) \bigg)^{n} \Bigg]
        &\leq  \Ex\Bigg[\sup_{s \leq t\leq T } \bigg(\int_s^t \sqrt{\bar{x}^m_\xi(u)}\, \d w_\xi(u)\bigg)^{2n} \Bigg]^{ \frac{1}{2} }\\
        &\leq  \Ex\Bigg[ \bigg(\int_s^T \bar{x}^m_\xi(t)\, \d t\bigg)^{n} \Bigg]^{ \frac{1}{2} }.
    \end{aligned}
    \ee
    This means that
    \be{Z058}
        \sup_{\xi \in G}\sup_{s \leq t \leq T} \Ex \big[\big(\bar x_\xi(t)\big)^{n}\big] < \infty \quad \text{ implies } \quad \sup_{\xi \in G} \Ex \bigg[\sup_{s \leq t\leq T } \big(\bar x_\xi(t)\big)^n\bigg] < \infty,
    \ee
    thus completing the proof of  (\ref{S-exch1}). In order to prove (\ref{S-exch2}) we first note that due to
    monotonicity in the initial condition (see Proposition \ref{monotonicity} and the following remark)
    it suffices to consider a process with translation invariant initial conditions.
    Thus (\ref{GA1}) follows from Proposition \ref{PMom}. Combining this fact with
    (\ref{S-exch1}) finishes the proof.
\end{proof}

\begin{proof}{\bf of Proposition \ref{monotonicity}}\:
 This result follows from an adaptation of fairly standard methods of Yamada and Watanabe
    \cite{YamadaWatanabe1971}, see also Shiga and Shimizu \cite{ShigaShimizu1980}.
    We want to show that  $X=X^{(1)} -X^{(2)}$ is not positive. Here, $X$ solves
    \begin{eqnarray}
    \nonumber
        \d x_{\xi}^m(t)
        = \Big( f^{m(1)}_{\xi}\big( x^{(1)}(t)\big) - f^{m (2)}_{\xi}\big( x^{(2)}(t)\big) \Big) \d t &+\bigg( \sqrt{\gamma^m x_{\xi}^{m (1)}(t)}-\sqrt{\gamma^m x_{\xi}^{m (2)}(t)} \bigg) \d w_{\xi}^m(t)\\
        \label{Z059}
         &+ \Big(y^{m (1)}_{\xi}(t) -y^{m (2)}_{\xi}(t) \Big) \d t
    \end{eqnarray}
    Let $g^{+}(x)=x \vee 0,\, x \in \R$ and let $g^{+, n}$ be an appropriate smoothing of $g^{+}$ as in \cite{YamadaWatanabe1971}. We can choose
    $g^{+, n}$ such that  $g^{+, n} \uparrow  g^{+}$ uniformly as $n \rightarrow \infty$ as well as
    $0\leq (g^{+, n})' \uparrow  1_{(0,\infty)}$ and
    $0 \leq (g^{+, n})''(x) \leq\frac{2}{n |x|}.$   We apply It\^o's formula to $x_{\xi}^m$ with the function $g^{+, n}$
    and consider the result stopped at time
    $ T_{N}= \inf\big\{t \geq 0 : \sum_{\xi \in G} \sum_{m=1}^M |x_{\xi}(t)| \rho(\xi) \geq N \big\}. $
    Using that  $g^{+,n}(x_{\xi}(0))= 0$ and $y^{m (1)}_{\xi}(t) -y^{m (2)}_{\xi}(t)\leq 0$ by assumption we get after    taking expections that
    \be{Z060}
    \begin{aligned}
        \Ex\Big[ g^{+,n}(x_{\xi}^m(t \wedge  T_{N})\Big]
        &\leq \Ex\Bigg[ \int_{0}^{t \wedge  T_{N}}  (g^{+,n})'(x_{\xi}^m(s))
        \Big( f^{m(1)}_{\xi}\big( x^{(1)}(s)\big) - f^{m (2)}_{\xi}\big( x^{(2)}(s) \big) \Big) \d s \Bigg]\\
        & \quad +\frac{1}{2} \Ex\Bigg[ \int_{0}^{t \wedge  T_{N}}  (g^{+,n})''(x_{\xi}^m(s))\Big( \sqrt{ \gamma^m x^{m (1)}_{\xi}(s)} -  \sqrt{ \gamma  x^{m (2)}_{\xi}(s) } \,\Big)^2 \d s \Bigg].
    \end{aligned}
    \ee
    Letting $n \rightarrow \infty,$ the quadratic variation term vanishes and we obtain due to Lebesgue's dominated convergence theorem that
    \be{Z061}
        \Ex\Big[ g^{+}(x_{\xi}^m(t \wedge  T_{N})\Big] \leq \Ex\Bigg[ \int_{0}^{t \wedge  T_{N}} 1_{\{x_{\xi}^m(s)>0\}}
        \Big( f^{m (1)}_{\xi}\big( x^{(1)}(s)\big) - f^{m (2)}_{\xi}\big( x^{(2)}(s) \big) \Big) \d s \Bigg].
    \ee
    We multiply the above by $\rho(\xi)$ and sum over $\xi$ and $m.$
    Using (\ref{compassump}) we arrive at
    \be{Z062}
        \Ex\Bigg[ \sum_{\xi \in G} \sum_{m=1}^M g^{+}\big(x_{\xi}^m(t \wedge  T_{N}\big) \rho(\xi) \Bigg] \leq c \int_{0}^{t }\Ex\Bigg[ \sum_{\xi \in G} \sum_{m=1}^M g^{+}\big(x_{\xi}^m(s\wedge  T_{N})\big) \rho(\xi) \Bigg] \d s
    \ee
    An application of Gronwall's lemma implies now that
    $$\Ex\Big[ \sum_{\xi \in G} \sum_{m=1}^M g^{+}\big(x_{\xi}^m(t \wedge  T_{N})\big) \rho(\xi) \Big]=0.$$
    Since $T_{N} \uparrow \infty$ a.s. as $N \rightarrow \infty$ and application of Fatou's lemma proves our result.
\end{proof}

\section{Proof of exponential duality in the exchangeable model}\label{P:exdual}

\begin{proof}{\bf of Lemma \ref{L.mart}}\;
For each $\kappa=( \kappa^{m}_{\xi})_{m=1,\dots,M, \xi \in G} \in (\N_0^M)^G$ with $\sum_{\xi \in G} \bar{\kappa}_{\xi}< \infty$ we can use an explicit construction with exponential random variables to see that there exists a unique Markov process $\kappa(t) \in D(\R_+,  (\N_0^M)^G)$ such that
$\kappa(0)=\kappa$ corresponding to the formal generator
\begin{eqnarray*}
   \Omega_{\kappa}f\big(\kappa)
 &:=&
   \sum_{m=1}^M\sum_{\xi,\eta\in G} \kappa^{m}_{\xi} a(\xi,\eta) \Big(f\big(\kappa+e(m,\eta)-e(m,\xi)\big)-f(\kappa)\Big)
  \\
 &&\qquad+
 \gamma\sum_{m=1}^M\sum_{\xi\in G}{\kappa^{m}_{\xi}\choose 2}
 \Big(f(\kappa-e(m,\xi))-f(\kappa)\Big).
\end{eqnarray*}
%
%
We also have that $\sum_{\xi \in G} \bar{\kappa}_{\xi}(t) \leq \sum_{\xi \in G} \bar{\kappa}_{\xi}< \infty.$
This follows immediately since by assumption the initial number of particles is finite and thus also the transition rates. Due to the nature of the transitions, the number of particles may only decrease over time.

Let $k_{\xi} \in \N^G$ such that $\sum_{\xi} k_{\xi}< \infty.$ Then, for any $\alpha(0) \in l^p(\rho)$ for $p\geq 4$
there exists a unique solution $\alpha$ with initial condition $\alpha(0)$ and such that $\alpha(t) \in l^p(\rho)$ for all $t \geq 0$ a.s. corresponding to the generator
\begin{eqnarray*}
   \Omega_{\alpha}^{k}f\big(\alpha)
 &:=&
   \sum_{\xi,\eta\in G}\bar{a}(\xi,\eta)\big(\alpha_\eta-\alpha_{\xi}\big)\frac{\partial}
   {\partial \alpha_{\xi}}f\big(\alpha,\kappa\big)\\
 & &\qquad+
   \gamma\sum_{\xi\in G}\alpha_{\xi} \Big(K-\frac12\alpha_{\xi}\Big)\frac{\partial}
   {\partial
\alpha_{\xi}}f\big(\alpha,\kappa\big)
+\gamma\lambda\sum_{\xi\in G}\alpha_{\xi}\frac{\partial^2}
   {\partial (\alpha_{\xi})^2}f\big(\alpha,\kappa\big) \\
 & &\qquad+
   \gamma\sum_{\xi\in G}\lambda k_{\xi}\frac{\partial}
   {\partial
\alpha_{\xi}}f\big(\alpha,\kappa\big),
\end{eqnarray*}
To see this we may apply Theorem 2.3 in \cite{Sturm03}, see also the proof of Theorem 5.1 in \cite{BlathEtheridgeMeredith2007} who apply this theorem in a setting similar to ours. The linear growth condition on the diffusion term needed in Theorem 2.3 in \cite{Sturm03} is immediate. Due to (\ref{Gr3}) and  $\sum_{\xi} k_{\xi}< \infty$ it is not hard to see that for $p \geq 1,$
\begin{eqnarray*}
\sum_{\xi \in G}  |\sum_{\eta\in G}\bar{a}(\xi,\eta)\big(\alpha_\eta-\alpha_{\xi}\big)+
   \gamma \alpha_{\xi} (K-\frac12\alpha_{\xi}) +   \gamma \lambda k_{\xi} |^p \rho(\xi)
&\leq& C  (1+ ||\alpha||_{2p, \rho}).
\end{eqnarray*}
In fact, we could have bounded the left hand side by $(1+ ||\alpha||_{p, \rho})$ if it were not for the quadratic term which only allows for the above bound. The arguments in the proof of Theorem 2.3 of \cite{Sturm03} then imply the existence of a process $\alpha$ to the system of stochastic differential equations corresponding to $  \Omega_{\alpha}^{k}$ for each initial condition $\alpha(0) \in  l^{2p}(\rho)$ with $p\geq 2.$ The solution is continuous in each component and satisfies
$\E( \sup_{0 \leq t \leq T}  ||\alpha||_{p, \rho}^{p}) < \infty$  for each $T \geq 0.$  The weak uniqueness of the process follows with a Girsanov argument as in the proof of Theorem \ref{T0}.
But in fact, by a slight modification of Proposition \ref{propn:l^pbound}  and its proof to accommodate the extra immigration term we can now conclude that $\alpha(t) \in l^{2p}(\rho)$ for all $t \geq 0$ a.s. and that $\E( \sup_{0 \leq t \leq T}  ||\alpha||_{2p, \rho}^{2p}) < \infty.$ This yields the claimed result.

It is therefore also immediate that for any sequence of times $0=t_0 \leq t_1\leq t_2 \leq ...$ and sequence $k^{(i)} \in \N^G$ with $\sup_i \sum_{\xi} k_\xi^{(i)} < \infty$  and $\alpha(0) \in l^p(\rho)$
 for $p \geq 4$ there exists a unique process $\alpha$ with initial condition $\alpha(0)$ such that $\alpha(t) \in l^p(\rho)$ for all $t \geq 0$ a.s. that is a solution of the system of stochastic differential equations corresponding to the generator $ \Omega_{\alpha}^{k^{(i)}}$ on the time interval $[t_{i-1},t_i].$
Each realization of an independent process $\kappa$ provides  such a sequence of (jump) times and states. Thus, for each $\alpha(0) \in l^p(\rho)$  for $p \geq 4$ and  $\kappa(0) \in (\N_0^M)^G$ with $\sum_{\xi \in G} \bar{\kappa}_{\xi}(0)< \infty$ we can define an $(\alpha, \kappa)$ process on a joint probability space that has $\Omega_{(\alpha, \kappa)}$ as its generator, and this process is unique in law. As before we have that the $\alpha$ process  is continuous in each component with $\alpha(t) \in l^{p}(\rho)$ for all $t \geq 0$ a.s. and $\E( \sup_{0 \leq t \leq T}  ||\alpha||_{p, \rho}^{p}) < \infty$ as well as that
$\kappa(t) \in D(\R_+,  (\N_0^M)^G)$ with $\sum_{\xi \in G} \bar{\kappa}_{\xi}(t) \leq \sum_{\xi \in G} \bar{\kappa}_{\xi}(0)< \infty.$\\
~
\end{proof}

\begin{proof}{\bf of Lemma \ref{ZL1}}\;
    Fix $t\ge 0$.
    Note that in the process $\kappa(t)$ particles only coalesce or migrate. Since $\kappa(0)$ has finite support,
    it follows immediately that $\sum_{\xi\in G}\bar\kappa_\xi(t) \bar{x}_\xi<\infty$ for all $t\ge 0$, a.s.

    For the process $\alpha(t)$ we prove the stronger statement
    \be{Z025}
        \Ex\Big[\sum_{\xi\in G}\alpha_\xi(t) \bar{x}_\xi\Big]<\infty \quad\forall x\in\CE,(\alpha(0),\kappa(0))\in\CE_f^{\rm dual} .
    \ee
    Note that this term can be compared with the corresponding term for a system of interacting supercritical Feller diffusions with immigration. Namely, by Proposition~\ref{monotonicity},
    \be{Z026}
        \Ex\Big[\sum_{\xi\in G}\alpha_\xi(t) \bar{x}_\xi\Big] \le \Ex\Big[\sum_{\xi\in G}y_\xi(t) \bar{x}_\xi\Big],
    \ee
    where $(y(t),\pi(t))_{0\le t\le T}$ is a Markov process with $(y(0),\pi(0))=(\alpha(0),\bar{\kappa}(0))$ and with the following generator,
    \be{Z027}
        \begin{aligned}
            \Omega_{y,\pi}f(y,\pi) :=
            & \sum_{\xi,\eta\in G} \pi_{\xi} a(\xi,\eta) \Big(f\big(y,\pi+\delta_{\eta}-\delta_{\xi}\big)-f(y,\pi)\Big) \\
            & {}+ \sum_{\xi,\eta\in G}\bar{a}(\xi,\eta)\big(y_\eta-y_{\xi}\big)\frac{\partial f}{\partial y_{\xi}}(y,\pi) + \gamma K\sum_{\xi\in G}y_{\xi} \frac{\partial f}{\partial y_{\xi}}(y,\pi) \\
            & {}+ \gamma \lambda \sum_{\xi\in G} \pi_\xi \frac{\partial f}{\partial y_\xi}(y,\pi) + \gamma\lambda\sum_{\xi\in G}y_{\xi}\frac{\partial^2 f}{\partial (y_{\xi})^2}(y,\pi).
        \end{aligned}
    \ee
    Since $\pi$ is a system of independent random walks, we have
    \be{Z027a}
        \Ex[\pi_\xi(t)] = \sum_{\eta\in G} \pi_\eta(0)a_t(\eta,\xi).
    \ee
    Then we observe that the first moment $h_\xi(t)=\Ex[y_\xi(t)]$ has to fulfill the following system of differential equations, $ \forall \xi\in G, y_\xi(0)=\alpha_\xi(0)$
    \be{Z027b}
        \frac{\d}{\d t} h_\xi(t) = \sum_{\eta\in G} \big(\bar{a}(\xi,\eta)-\delta(\xi,\eta)\big)h_\eta(t) + \gamma K h_\xi(t) +  \gamma\lambda\sum_{\eta\in G} \pi_\eta(0)a_t(\eta,\xi), 
    \ee
    which is solved by
    \be{Z027c}
        h_\xi(t) = e^{\gamma Kt} \sum_{\eta\in G} \bar{a}_t(\xi,\eta) \Big( \alpha_\eta(0) + \frac{\lambda}{K}(1-e^{-\gamma Kt})\pi_\eta(0)\Big).
    \ee
    Now we calculate the right hand side of (\ref{Z026}) as
    \be{Z028}
            \Ex\Big[\sum_{\xi\in G} y_\xi(t) \bar{x}_\xi\Big]
            = e^{\gamma Kt} \sum_{\xi,\eta\in G} \bar{a}_t(\xi,\eta) \Big( \alpha_\eta(0) +  \frac{\lambda}{K}(1-e^{-\gamma Kt})\pi_\eta(0)\Big)\bar{x}_\xi.
    \ee
    The first term on the r.h.s.\ of (\ref{Z028}) can be estimated as follows
    \be{Z030}
        e^{\gamma Kt}\sum_{\xi,\eta\in G}\bar{a}_t(\xi,\eta)\alpha_\eta(0) \bar{x}_\xi \le e^{\gamma Kt}\Bigg(\sup_{\xi\in G,\atop\eta\in{\rm supp }\;\alpha(0)}\frac{a_t(\eta,\xi)}{\rho(\xi)}\Bigg) \sum_{\eta\in G} \alpha_\eta(0) \sum_{\xi\in G} \rho(\xi)\bar{x}_\xi.
    \ee
    Since $x\in\CE$ and $\alpha(0)$ has finite support, this leaves us with proving
    \be{Z031}
        \sup_{\xi\in G}\frac{a_t(\eta,\xi)}{\rho(\xi)}<\infty, \quad \forall t>0,\eta\in G.
    \ee
    To verify this note that
    \be{Z036}
        \sup_{\xi\in G}\frac{a_t(\eta,\xi)}{\rho(\xi)} \le \sup_{\xi\in G}\frac{\displaystyle e^{-t}\sum_{n=0}^\infty \frac{t^n}{n!}a^{(n)}(\eta,\xi)}{\displaystyle \sum_{n=0}^\infty C^{-n}a^{(n)}(\eta,\xi)\beta(\eta)} \le \frac{1}{\beta(\eta)}\,e^{Ct}<\infty.
    \ee
    Hence we have proved that the first term on the r.h.s.\ of (\ref{Z028}) is  finite, namely
    \be{Z034}
        e^{\gamma Kt}\sum_{\xi,\eta\in G}\bar{a}_t(\xi,\eta)\alpha_\eta(0) \bar{x}_\xi \le e^{\gamma Kt+Ct}\bigg(\max_{\eta\in{\rm supp }\;\alpha(0)}\frac{1}{\beta(\eta)}\bigg) \sum_{\eta\in G}\alpha_\eta(0) \sum_{\xi\in G}\rho(\xi)\bar{x}_\xi <\infty.
    \ee

    Now we come to the second term on the r.h.s.\ of (\ref{Z028}) which we estimate as follows (using (\ref{Z036}))
    \be{Z037}
        \begin{aligned}
            &  \frac{\lambda}{K}(e^{\gamma Kt}-1) \sum_{\xi,\eta\in G} \bar{a}_t(\xi,\eta)\pi_\eta(0)\bar{x}_\xi \\
            & =  \frac{\lambda}{K}(e^{\gamma Kt}-1) \sum_{\eta\in G} \pi_\eta(0) \sup_{\xi\in G} \frac{\bar{a}_t(\xi,\eta)}{\rho(\xi)} \sum_{\xi\in G} \rho(\xi)\bar{x}_\xi \\
            & \le  \frac{\lambda}{K}(e^{\gamma Kt}-1)e^{Ct} \left(\max_{\eta\in{\rm supp}\;\pi(0)} \frac{1}{\beta(\eta)} \right) \sum_{\eta\in G} \pi_\eta(0) \sum_{\xi\in G} \rho(\xi)\bar{x}_\xi <\infty.
        \end{aligned}
    \ee
    This completes the proof.
\end{proof}


\begin{proof}{\bf of Proposition \ref{L:dual}}\;
    Here we want to apply Theorem 4.11 in Chapter 4 of \cite{EthierKurtz1986}. First we check condition (4.42) in Chapter 4 of \cite{EthierKurtz1986} in order to see that the r.h.s.\ of (4.54) in \cite{EthierKurtz1986} is zero.
    That is, we show that
    \be{e:self3d}
        \Omega_X H\big((\alpha,\kappa),x\big) = \Omega_{(\alpha,\kappa)}H\big((\alpha,\kappa),x\big) + \beta(\alpha,\kappa)H\big((\alpha,\kappa),x\big)
    \ee
    with
    \be{beta}
    \beta(\alpha, \kappa):=  \gamma\sum_{\xi\in G}\Bigg(\sum_{m=1}^M {\kappa^{m}_{\xi}\choose 2}-\alpha_{\xi}\bar{\kappa}_{\xi}+K\bar{\kappa}_{\xi}\Bigg).
    \ee

    Since for $m=1,\dots,M,$
    \be{e:self2b}
       \frac{\partial}{\partial x_\xi^m}H\big((\alpha,\kappa),x\big)
     =
       -\alpha_{\xi} H\big((\alpha,\kappa),x\big)+\kappa^{m}_{\xi}H\big((\alpha,\kappa-e(m,\xi)),
       x\big),
    \ee
    and
    \be{e:self2c}
    \begin{aligned}
       \frac{\partial^2}{\partial(x^{m}_{\xi})^2}H\big((\alpha,\kappa),x\big)
     &=
       (\alpha_{\xi})^2
     H\big((\alpha,\kappa),x\big)-2\alpha_{\xi}\kappa^{m}_{\xi}H\big((\alpha,\kappa-e(m,\xi)),x\big)
      \\
     &\qquad\qquad+
       \kappa^{m}_{\xi}(\kappa^{m}_{\xi}-1)H\big((\alpha,\kappa-2e(m,\xi)),x\big),
    \end{aligned}
    \ee
    \be{e:self3a}
    \begin{aligned}
       &\Omega_{X}H\big((\alpha,\kappa),x\big)
      \\
     &=
       \sum_{m=1}^M \sum_{\xi,\eta\in G}
       \big({{}\bar{a}}(\xi,\eta)-\delta(\xi,\eta)\big)x^{m}_{\eta}\Big(-\alpha_{\xi} H\big((\alpha,\kappa),x\big)+\kappa^{m}_{\xi}H\big((\alpha,\kappa-e(m,\xi)),x\big)\Big)
      \\
     &\qquad+
       \gamma \sum_{m=1}^M \sum_{\xi\in G}
       x^{m}_{\xi}(K-\lambda\bar{x}_{\xi})
       \Big(-\alpha_{\xi} H\big((\alpha,\kappa),x\big)+\kappa^{m}_{\xi}H\big((\alpha,\kappa-e(m,\xi)),x\big)\Big)
      \\
     &\qquad +
       \frac{\gamma}{2} \sum_{m=1}^M \sum_{\xi\in G} x^{m}_{\xi}\bigg((\alpha_{\xi})^2 H\big((\alpha,\kappa),x\big)-2\alpha_{\xi}\kappa^{m}_{\xi}H\big((\alpha,\kappa-e(m,\xi)),x\big) \\
     &\qquad\qquad\qquad\qquad\qquad+
       \kappa^{m}_{\xi}(\kappa^{m}_{\xi}-1)H\big((\alpha,\kappa-2e(m,\xi)),x\big)\bigg).
    \end{aligned}
    \ee
    Noticing that     $x^{m}_{\xi}H\big((\alpha,\kappa),x\big)=H\big((\alpha,\kappa+e(m,\xi)),x\big)$
    yields that
    \be{e:self3b}
    \begin{aligned}
       &\Omega_{X}H\big((\alpha,\kappa),x\big)
      \\
     &=
       -\sum_{\xi\in G}\alpha_{\xi}\sum_{\eta\in G} \big({{}\bar{a}}(\xi,\eta)-\delta(\xi,\eta)\big)\bar{x}_{\eta}H\big((\alpha,\kappa),x\big)
      \\
     &\qquad
       +\sum_{m=1}^M \sum_{\xi,\eta\in G}
       \big({{}\bar{a}}(\xi,\eta)-\delta(\xi,\eta)\big)
       \kappa^{m}_{\xi}H\big((\alpha,\kappa+e(m,\eta)-e(m,\xi)),x\big)
      \\
     &\qquad
       +\gamma\sum_{\xi\in G}(K-\lambda\bar{x}_{\xi})\big(-\alpha_{\xi}
       \bar{x}_{\xi}+\bar{\kappa}_{\xi}\big)H\big((\alpha,\kappa),x\big)
      \\
     &\qquad +
       \frac{\gamma}{2}\sum_{\xi\in G}(\alpha_{\xi})^2\bar{x}_{\xi}
       H\big((\alpha,\kappa),x\big)-
       \gamma\sum_{\xi\in G}\alpha_{\xi}\bar{\kappa}_{\xi}H\big((\alpha,\kappa),x\big)
      \\
     &\qquad +
       \gamma\sum_{m=1}^M \sum_{\xi\in G} {\kappa^{m}_{\xi}\choose 2}
       H\big((\alpha,\kappa-e(m,\xi)),x\big).
    \end{aligned}
    \ee

    Moreover, since $\partial/\partial \alpha_{\xi} H=-\bar{x}_{\xi} H$ and
    $\partial^2/\partial(\alpha_{\xi})^2H=(\bar{x}_{\xi})^2 H$,
    \be{e:self3c}
    \begin{aligned}
       &\Omega_{X}H\big((\alpha,\kappa),x\big)
      \\
     &=
       \sum_{\xi\in G}\alpha_{\xi}\sum_{\eta\in G}
       \big({{}\bar{a}}(\xi,\eta)-\delta(\xi,\eta)\big)\frac{\partial}{\partial\alpha_{\eta}}
       H\big((\alpha,\kappa),x\big)
      \\
     & \qquad +\sum_{m=1}^M \sum_{\xi,\eta\in G}
       \kappa^{m}_{\xi}{{}\bar{a}}(\xi,\eta)
       \Big(H\big((\alpha,\kappa+e(m,\eta)-e(m,\xi)),x\big)-H\big((\alpha,\kappa),x\big)\Big)
      \\
     & \qquad +\gamma\sum_{\xi\in G}\bigg(K\bar{\kappa}_{\xi} H\big((\alpha,\kappa),x\big)+
       (K \alpha_{\xi}+\lambda \bar{\kappa}_{\xi})\frac{\partial}{\partial\alpha_{\xi}}
       H\big((\alpha,\kappa),x\big) \\
     & \phantom{AAAAAAAAAAAAAAAAAAAAAAAA}+\lambda\alpha_{\xi}\frac{\partial^2}{\partial(\alpha_{\xi})^2}H\big((\alpha,\kappa),x\big)\bigg)
      \\
     & \qquad -\frac{1}{2}\gamma\sum_{\xi\in G}(\alpha_{\xi})^2
       \frac{\partial}{\partial\alpha_{\xi}}H\big((\alpha,\kappa),x\big)-
       \gamma\sum_{\xi\in G}\alpha_{\xi}\bar{\kappa}_{\xi}H\big((\alpha,\kappa),x\big)
      \\
     & \qquad +\gamma\sum_{m=1}^M \sum_{\xi\in G} {\kappa^{m}_{\xi}\choose 2}
       H\big((\alpha,\kappa-e(m,\xi)),x\big)
      \\
     &=
       \Omega_{(\alpha,\kappa)}H\big((\alpha,\kappa),x\big)+
       \gamma\sum_{\xi\in G}\Bigg(\sum_{m=1}^M {\kappa^{m}_{\xi}\choose 2}-\alpha_{\xi}\bar{\kappa}_{\xi}+K\bar{\kappa}_{\xi}\Bigg)
       H\big((\alpha,\kappa),x\big),
    \end{aligned}
    \ee
    which is (\ref{e:self3d}).

    A careful inspection of the proof of Theorem 4.11 and Corollary 4.13 in Chapter 4 of \cite{EthierKurtz1986} shows that the following conditions are now sufficient to establish the duality:
    \begin{eqnarray}
        \label{s:dualcond1}
        & &  \sup_{s,t\leq T} \Ex\bigg[ \Big(\big|\beta\big(\alpha(s), \kappa(s)\big)\big|+1\Big) \Big(\big|H\big((\alpha(s),\kappa(s)),X(t)\big)\big|+1\Big)\bigg]
        <\infty,\\
          \label{s:dualcond2}
          & &\sup_{r,s,t\leq T} \Ex\bigg[ \Big(\big|\beta\big(\alpha(r), \kappa(r)\big)\big|+1\Big) \big|\Omega_{(\alpha, \kappa)} H\big((\alpha(s),\kappa(s)),X(t)\big)\big| \bigg]
          <\infty,\\
         \label{s:dualcond3}
        & &\beta\big(\alpha(r), \kappa(r)\big)\leq C\quad\forall r\le T,
    \end{eqnarray}
    where $C$ is a constant.
    Indeed, from the proof of Theorem 4.11  in Chapter 4 of \cite{EthierKurtz1986} we see that it suffices to check that for $0\leq s, s+h, t\leq T$
    \begin{eqnarray}
        \label{s:intcond1}
        && \Bigg|\int_s^{s+h} \Ex\bigg[ \bigg( \int_r^{s+h} \Omega_{(\alpha, \kappa)} H\big((\alpha(v),\kappa(v)),X(t)\big) \d v \bigg)
        \beta\big(\alpha(r), \kappa(r)\big)\\
        \nonumber
        && \phantom{AAAAAAAAAAAAAAAAAAAAa}
        \cdot \exp\bigg( \int_{0}^r \beta\big(\alpha(u), \kappa(u)\big) \d u\bigg) \bigg] \d r \Bigg| \leq C(T) h^2\\
        \label{s:intcond2}
        && \Bigg| \int_s^{s+h} \Ex\bigg[  \Omega_{(\alpha, \kappa)} H\big((\alpha(r),\kappa(r)),X(t)\big)\\
        \nonumber
        && \phantom{A}\cdot \bigg\{\exp\bigg( \int_{0}^s \beta\big(\alpha(u), \kappa(u)\big)\d u\bigg)
        - \exp\bigg( \int_{0}^r \beta\big(\alpha(u), \kappa(u)\big)\d u\bigg) \bigg\} \bigg] \d r \Bigg| \leq C(T) h^2\\
        \label{s:intcond3}
        && \Bigg|\int_0^{s} \Ex\bigg[ \Omega_{X} H\big((\alpha(t),\kappa(t)),X(r)\big)\cdot
        \exp\bigg( \int_{0}^t \beta\big(\alpha(u), \kappa(u)\big) \d u\bigg) \bigg] \d r \Bigg| \phantom{AA}\leq C(T) \\
        \label{s:intcond4}
        && \Bigg|\int_0^{s} \Ex\bigg[  \bigg\{H\big((\alpha(r),\kappa(r)),X(t)\big)\cdot  \beta\big(\alpha(r), \kappa(r)\big) + \Omega_{(\alpha, \kappa)} H\big((\alpha(r),\kappa(r)),X(t)\big) \bigg\}\\
        \nonumber
        &&\phantom{AAAAAAAAAAAAAAAAAAAA}
        \cdot \exp\bigg( \int_{0}^r \beta\big(\alpha(u), \kappa(u)\big) \d u\bigg) \bigg] \d r \Bigg| \leq C(T)
    \end{eqnarray}
    Here, (\ref{s:intcond3}) and (\ref{s:intcond4}) follow immediately from (\ref{e:self3d}) to (\ref{s:dualcond3}). Also, due to (\ref{s:dualcond3}) the expression on the l.h.s.\ in (\ref{s:intcond1}) is bounded by
    \be{s:intcond1check}
    \begin{aligned}
        & e^{C(s+h)} \int_s^{s+h}\int_r^{s+h}  \Ex\bigg[ \Big|\Omega_{(\alpha, \kappa)} H\big((\alpha(v),\kappa(v)),X(t)\big)\Big| \cdot
        \Big| \beta\big(\alpha(r), \kappa(r) \big)\Big| \bigg] \d v\, \d r \\
        &\leq e^{C(s+h)} h^2 \sup_{s\le r\le v\le s+h} \Ex\bigg[\Big| \beta\big(\alpha(r), \kappa(r) \big) \Big| \cdot \Big|\Omega_{(\alpha, \kappa)} H\big((\alpha(v),\kappa(v)),X(t)\big)\Big| \bigg] \\
        &\leq C(T) h^2,
    \end{aligned}
    \ee
    where we have used  (\ref{s:dualcond2}). The expression on the l.h.s.\ in (\ref{s:intcond2}) can be bounded as follows
    \be{s:intcond2check}
    \begin{aligned}
        & \Bigg| \int_s^{s+h} \Ex\Bigg[  \Omega_{(\alpha, \kappa)} \big((\alpha(r),\kappa(r)),X(t)\big)\\
        & \phantom{AA}\cdot \exp\bigg( \int_{0}^s \beta\big(\alpha(u), \kappa(u)\big)\d u\bigg)\cdot\bigg\{1
        - \exp\bigg( \int_{s}^r \beta\big(\alpha(u), \kappa(u)\big)\d u\bigg) \bigg\} \Bigg] \d r \Bigg| \\
        & \le e^{Cs}\int_s^{s+h} \Ex\Bigg[  \Big|\Omega_{(\alpha, \kappa)} H\big((\alpha(r),\kappa(r)),X(t)\big)\Big|\\
        &\phantom{AAAAAA}\cdot  \Big|\int_{s}^r \beta\big(\alpha(u), \kappa(u)\big) \d u\Big| \cdot
        \exp\bigg( 0\vee \sup_{s\le v\le s+h}  \int_s^v \beta\big(\alpha(u), \kappa(u)\big) \d u\bigg) \Bigg]\d r \\
        &\leq  e^{Cs}\dot\exp\bigg( 0\vee \sup_{s\le v\le s+h}  \int_s^v \beta\big(\alpha(u), \kappa(u)\big) \d u\bigg) \\
        &\phantom{AAAAAA} \cdot \int_s^{s+h} \int_{s}^r \Ex\bigg[  \Big|\Omega_{(\alpha, \kappa)} H\big((\alpha(r),\kappa(r)),X(t)\big)\Big| \cdot  \Big| \beta\big(\alpha(u), \kappa(u)\big) \Big| \bigg] \d u \,\d r \\
        &\leq C(T) h^2,
    \end{aligned}
    \ee
    where we first applied (\ref{s:dualcond3}) and we used the fact $|1-e^x|\le |x|e^{0\vee x}$ and then we applied (\ref{s:dualcond2}).

    The remainder of the proof is therefore concerned with showing the three integrability conditions
    (\ref{s:dualcond1}) to (\ref{s:dualcond3}). We first note that condition (\ref{s:dualcond3}) is naturally fulfilled since we start with $n= \sum_{\xi \in G} \bar{\kappa}_{\xi}(0) < \infty,$ and $\sum_{\xi \in G} \bar{\kappa}_{\xi}(t)\leq n$ for all $t \geq 0.$
    In order to show (\ref{s:dualcond1}) and (\ref{s:dualcond2}) we define
    \be{Akappa}
    A_{\kappa}(t):=\{ \xi \in G: \bar{\kappa}_{\xi}(t) >0\} \quad \quad \text{ and }  \quad \quad \bar{A}_{\kappa}(T)=\bigcup_{0\leq t\leq T}
    A_{\kappa}(t)
    \ee
    as the set of sites occupied by $\kappa$ particles at time $t$ and up to time $T$ respectively.
    We write $|\bar{A}_{\kappa}(T)|$ for the cardinality of the latter set. We let ${\cal F}^{\kappa}_t=\sigma( (\kappa(s))_{s\leq t})$ be the $\sigma$-algebra generated by  the process $\kappa$ up to time $t.$
    The following estimates will be used repeatedly. The constant $C$ may change from line to line. First note that
    \begin{equation}
        \label{s:betabound0}
         \big|\beta\big(\alpha(r), \kappa(r)\big)\big|
         \leq \gamma \bigg( n^2 + K n + \sum_{\xi \in G} \alpha_{\xi}(r) \bar{\kappa}_{\xi}(r)\bigg)
        \leq C \bigg(1+  \sum_{\xi \in G} \alpha_{\xi}(r)\bigg).
    \end{equation}
    We also have  for any fixed $(m_\xi)_{\xi \in G} \in \N^G$ with $m=\sum_{\xi \in G} m_\xi$
    and $A= \{ \xi \in G: m_{\xi}\geq 1\}$ finite that
    \be{s:prodsum}
    \begin{aligned}
         \prod_{\xi \in G} \bar{x}_\xi(t)^{m_{\xi}} x(t)^{\kappa(s)}
         &\leq \prod_{\xi \in G} \bar{x}_{\xi}(t)^{m_\xi + \bar{\kappa}_{\xi}(s)}
         \leq \prod_{\xi \in G}\Big(\max_{\zeta \in A_{\kappa}(s)\cup A}         \bar{x}_{\zeta}(t)\Big)^{m_\xi+ \bar{\kappa}_{\xi}(s)}   \\
         &\leq \Big( 1 \vee\max_{\xi \in A_{\kappa}(s)\cup A} \bar{x}_{\xi}(t)\Big)^{m+n} \leq  1+\sum_{\xi \in A_{\kappa}(s)\cup A}  \bar{x}_{\xi}(t)^{m+n}
    \end{aligned}
    \ee
    where $a\vee b$ denotes the maximum of $a$ and $b$.
    Therefore,
    \be{s:x^kappabound}
    \begin{aligned}
        & \Ex\bigg[  \sup_{s,t \leq T} \prod_{\xi \in G} \bar{x}_\xi(t)^{m_{\xi}} x(t)^{\kappa(s)} \Big| {\cal F}^{\kappa}_T \bigg]  \leq \Ex\bigg[  \sup_{s,t \leq T} \Big(1+\sum_{\xi \in \bar{A}_{\kappa}(T) \cup A} \bar{x}_{\xi}(t)^{m+n }\Big)\Big| {\cal F}^{\kappa}_T\bigg]\\
        &\leq 1+ \Big(|\bar{A}_{\kappa}(T)|+ |A|\Big)
        \sup_{\xi \in G} \Ex\bigg[\sup_{t \leq T}  \bar{x}_{\xi}(t)^{m+n}\bigg]\leq C\Big( 1+|\bar{A}_{\kappa}(T)|+ |A|\Big)
    \end{aligned}
    \ee
    by Proposition \ref{PMom-exch} provided that $\bar{x}(0)$ is bounded above by a translation invariant distribution with bounded $(m+n)$-th moment at each site. We will use (\ref{s:x^kappabound})
    with $m\leq 2.$

    To check (\ref{s:dualcond1})
    we condition on ${\cal F}^{\kappa}_T$ and use the conditional independence of $\alpha$ and
    $X$ as well as (\ref{s:betabound0}) and (\ref{s:x^kappabound})  to arrive at
    \be{s:betaH}
    \begin{aligned}
        & \sup_{s,t\leq T}\Ex\Big[  \big|\beta\big(\alpha(s), \kappa(s)\big)\big|\cdot\big|H\big((\alpha(s),\kappa(s)),X(t)\big)\big|\Big]\\
        &\leq C \sup_{s,t\leq T}
        \Ex\bigg[  \Ex\Big[1+\sum_{\xi \in G} \alpha_{\xi}(s) \Big| {\cal F}^{\kappa}_T \Big]
        \cdot  \Ex\Big[ x(t)^{\kappa(s)} \Big| {\cal F}^{\kappa}_T  \Big]  \bigg]\\
        &\leq C \,\,
        \Ex\bigg[ \sup_{s\leq T}  \Ex\Big[1+\sum_{\xi \in G} \alpha_{\xi}(s) \Big| {\cal F}^{\kappa}_T \Big]
        \cdot  \Big(1+\big|\bar{A}_{\kappa}(T)\big|\Big)  \bigg]
    \end{aligned}
    \ee
    By  Proposition \ref{monotonicity}
    we can couple $\sum_{\xi \in G} \alpha_{\xi}$ to a
     supercritical nonspatial branching process $\bar{\alpha}$ with immigration solving
     \begin{equation}
     \label{s:baralpha}
     \d \bar{\alpha}(t)
     = \gamma K \bar{\alpha}(t)\,\d t + \sqrt{\gamma \lambda \bar{\alpha}(t)} \,\d w(t)+ \gamma \lambda n \,\d t,
     \end{equation}
     such that for $\sum_{\xi \in G} \alpha_{\xi}(0)=\bar{\alpha}(0)$ we have $\sum_{\xi \in G} \alpha_{\xi}(t)\leq \bar{\alpha}(t)$ for $t \geq 0.$ We note that $ \bar{\alpha}$ is independent of $\kappa.$ Since for any $m\in \N,$ $\Ex[\bar{\alpha}(0)^m]<\infty$ implies $\sup_{t \leq T} \Ex[\bar{\alpha}(t)^m]<\infty$
     we have
    \begin{equation}
        \label{s:alphasum}
        \sup_{r\leq T}  \Ex\bigg[\Big(\sum_{\zeta \in G} \alpha_{\zeta}(r)\Big)^m   \Big| {\cal F}^{\kappa}_T  \bigg]
        \leq \sup_{r\leq T}  \Ex\big[ \bar{\alpha}(r)^m  \big]< \infty
    \end{equation}
    provided that $\Ex\big[(\sum_{\xi \in G} \alpha_{\xi}(0) )^m\big]< \infty.$
    Hence, we can bound (\ref{s:betaH}) by $C\big(1+ \Ex\big[ |\bar{A}_{\kappa}(T)|\big]\big).$ But since each $\kappa$ particle performs an independent random walk (until coalescence) at rate one
    the number of sites in the set $\bar{A}_{\kappa}(T)$, $|\bar{A}_{\kappa}(T)|,$ can be bounded by a Poisson random variable with parameter $nT$ and so we arrive at (\ref{s:dualcond1}).

    We now turn to showing (\ref{s:dualcond2}) with similar means. Recall the form of the generator in (\ref{e:thedual2}), which consists of six terms. The first term we bound as follows (using $\bar{\kappa}_\xi\leq n$ and (\ref{s:prodsum}),
    \be{s:B_1bound}
    \begin{aligned}
        &|B_1(s,t)| \\
        &:= \bigg|\sum_{m=1}^M \sum_{\xi,\eta\in G} \kappa^{m}_{\xi}(s) a(\xi,\eta) \exp\bigg(\!\!-\!\sum_{\zeta \in G}\alpha_\zeta(s) \bar{x}_{\zeta}(t) \bigg) \Big(x(t)^{\kappa(s)+e_{\eta}^m-e_{\xi}^m }- x(t)^{\kappa(s)}\Big)\bigg|\\
        &\leq \sum_{m=1}^M\sum_{\xi,\eta\in G} \kappa_{\xi}^m(s) a(\xi,\eta) \Big|x(t)^{\kappa(s)+e_{\eta}^m-e_{\xi}^m }- x(t)^{\kappa(s)}\Big|\\
        &\leq 2n  \sum_{\xi \in \bar{A}_{\kappa}(T)} \sum_{\eta\in G} a(\xi,\eta)
        \bigg(1+\sum_{\zeta \in \bar{A}_{\kappa}(T) \cup\{\eta\}}
        \bar{x}_{\zeta}(t)^{n }\bigg)
    \end{aligned}
    \ee
    which depends on $(\alpha, \kappa)$ only through $\bar{A}_{\kappa}(T).$
    Hence, by (\ref{s:betabound0}) and (\ref{s:B_1bound}) and conditioning on ${\cal F}^{\kappa}_T$,
    \be{s:betaB_1bound}
    \begin{aligned}
        & \sup_{r,s,t\leq T}\Ex\bigg[\Big(\big|\beta\big(\alpha(r), \kappa(r)\big)\big|+1\Big)\big|B_1(s,t)\big|\bigg]\\
        &\leq C \sup_{r,t\leq T}\Ex\bigg[ \Ex\Big[1\!+\!\sum_{\zeta \in G} \alpha_{\zeta}(r) \Big| {\cal F}^{\kappa}_T  \Big] \! \cdot    \Ex\Big[\!\sum_{\xi \in \bar{A}_{\kappa}(T)} \sum_{\eta\in G} a(\xi,\eta)
        \big(1+\!\!\!\sum_{\zeta \in \bar{A}_{\kappa}(T) \cup\{\eta\}} \!\!\! \bar{x}_{\zeta}(t)^{n }\big) \Big| {\cal F}^{\kappa}_T  \Big]  \bigg]\\
        &\leq C\,\sup_{t\leq T}\, \Ex\bigg[ \sum_{\xi \in \bar{A}_{\kappa}(T)} \sum_{\eta\in G} a(\xi,\eta) \Ex\Big[  1+\sum_{\zeta \in \bar{A}_{\kappa}(T) \cup\{\eta\}} \bar{x}_{\zeta}(t)^{n }  \Big| {\cal F}^{\kappa}_T \Big]\bigg]\\
        &\leq C \, \Ex\bigg[ \sum_{\xi \in \bar{A}_{\kappa}(T)} \sum_{\eta\in G} a(\xi,\eta) \Big(1+|\bar{A}_{\kappa}(T)|\Big) \bigg] \leq  C\Ex\Big[ \big(1+|\bar{A}_{\kappa}(T)|\big)^2\Big]< \infty,
    \end{aligned}
    \ee
    where we have used (\ref{s:alphasum}) and (\ref{s:x^kappabound}) in the second and third inequality respectively. Finally we used again that $|\bar{A}_{\kappa}(T)|$ can be bounded by a Poisson random variable with parameter $nT$.

    For the second term we use (\ref{s:prodsum}) to obtain
    \be{s:B_2bound}
    \begin{aligned}
        |B_2(s,t)|
        &:= \bigg|\gamma \sum_{m=1}^M\sum_{\xi\in G}{\kappa^{m}_{\xi}(s)\choose 2}
        \exp\bigg(-\sum_{\zeta \in G}\alpha_\zeta(s) \bar{x}_{\zeta}(t) \bigg)\Big(x(t)^{\kappa(s)-e_{\xi}^m}- x(t)^{\kappa(s)}\Big)\bigg| \\
        &\leq 2\gamma n^2 \bigg(1+ \sum_{\xi \in \bar{A}_{\kappa}(T)} \bar{x}_{\xi}(t)^{n}\bigg).
    \end{aligned}
    \ee
    Hence by (\ref{s:betabound0}) and (\ref{s:B_2bound}) we obtain
    \be{s:betaB_2bound}
    \begin{aligned}
        & \sup_{r,s,t\leq T} \Ex\Big[ \Big(\big|\beta\big(\alpha(r), \kappa(r)\big)\big|+1\Big)\big|B_2(s,t)\big| \Big] \\
        &\leq C\sup_{r,s,t\leq T} \Ex\bigg[ \Ex\Big[ 1+ \sum_{\zeta \in G} \alpha_{\zeta}(r) \Big| {\cal F}^{\kappa}_T \Big]
         \cdot \Ex\Big[ 1+ \sum_{\xi \in \bar{A}_{\kappa}(T)} \bar{x}_{\xi}(t)^{n}\Big| {\cal F}^{\kappa}_T \Big] \bigg]\\
        &\leq C  \sup_{r\leq T} \Ex\big[ 1+ \bar{\alpha}(r)\big] \cdot \Ex\big[1+ \bar{A}_{\kappa}(T)\big]<\infty,
    \end{aligned}
    \ee
    where we have used (\ref{s:x^kappabound}) and (\ref{s:alphasum}).

    For the third term we bound
    \be{s:B_3bound}
    \begin{aligned}
        |B_3(s,t)|
        &:= \bigg| \sum_{\xi,\eta\in G}\bar{a}(\xi,\eta)\big(\alpha_\eta(s)-\alpha_{\xi}(s)\big) \exp\bigg(\!\!-\!\sum_{\zeta \in G}\alpha_\zeta(s) \bar{x}_{\zeta}(t) \bigg) \big(-\bar{x}_{\xi}(t)\big) x(t)^{\kappa(s)}\bigg|\\
        &\leq  \sum_{\xi, \eta\in G}\bar{a}(\xi,\eta) \big(\alpha_{\eta}(s)+ \alpha_{\xi}(s)\big) \bar{x}_{\xi}(t)
         x(t)^{\kappa(s)}
    \end{aligned}
    \ee
    By conditioning on ${\cal F}^{\kappa}_T$ and using (\ref{s:betabound0})
    as well as (\ref{s:B_3bound}) we obtain
    \be{s:betaB_3bound}
    \begin{aligned}
        & \sup_{r,s,t\leq T} \Ex\Big[\Big(\big|\beta\big(\alpha(r), \kappa(r)\big)\big|+1\Big)\big|B_3(s,t)\big|\Big]\\
        &\leq C \sup_{r,s,t\leq T} \Ex\bigg[ \Big(1+ \sum_{\zeta \in G} \alpha_{\zeta}(r) \Big)  \cdot \bigg(  \sum_{\xi, \eta\in G}\bar{a}(\xi,\eta) \big(\alpha_{\eta}(s)+ \alpha_{\xi}(s)\big) \bar{x}_{\xi}(t) x(t)^{\kappa(s)} \bigg)\bigg]\\
        &= C  \sup_{r,s,t\leq T}\Ex\Bigg[  \sum_{\xi, \eta\in G} \bar{a}(\xi,\eta)  \cdot \Ex\Big[\Big(1+ \sum_{\zeta \in G} \alpha_{\zeta}(r)\Big)\big(\alpha_{\eta}(s)+ \alpha_{\xi}(s)\big)\Big| {\cal F}^{\kappa}_T \Big] \\
        & \phantom{AAAAAAAAAAAAAAAAA}\cdot \Ex\Big[ \bar{x}_{\xi}(t) x(t)^{\kappa(s)} \Big| {\cal F}^{\kappa}_T \Big] \Bigg]\\
        &\leq C\, \Ex\bigg[ \sup_{r,s\leq T}\sum_{\xi, \eta\in G} \bar{a}(\xi,\eta)  \cdot \Ex\Big[\Big(1+ \sum_{\zeta \in G} \alpha_{\zeta}(r)\Big)\big(\alpha_{\eta}(s)+ \alpha_{\xi}(s)\big) \Big| {\cal F}^{\kappa}_T \Big]  \cdot \big(1+\bar{A}_{\kappa}(T)\big) \bigg],
    \end{aligned}
    \ee
    where we have used  (\ref{s:x^kappabound}) in the last inequality. But we may bound
    \be{s:B_3alphabound}
    \begin{aligned}
        & \sup_{r,s\leq T} \sum_{\xi, \eta\in G} \bar{a}(\xi,\eta)  \Ex\bigg[\Big ( 1+\sum_{\zeta \in G} \alpha_{\zeta}(r)\Big) \cdot \big(\alpha_{\eta}(s)+ \alpha_{\xi}(s)\big) \Big| {\cal F}^{\kappa}_T \bigg] \\
        &\leq 2 \sup_{r,s\leq T}  \Ex\bigg[ \sum_{\xi \in G} \alpha_{\xi}(s) +  \sum_{\zeta, \xi \in G} \alpha_{\zeta}(r) \alpha_{\xi}(s) \Big| {\cal F}^{\kappa}_T \bigg]\\
        &\leq 4 \sup_{s\leq T} \Ex\bigg[ \sum_{\xi \in G} \alpha_{\xi}(s) +  \Big(\sum_{\xi \in G} \alpha_{\xi}(s) \Big)^2 \Big| {\cal F}^{\kappa}_T \bigg]\leq 4 \sup_{s\leq T} \Ex\Big[ \bar{\alpha}(s) + \bar{\alpha}(s)^2\Big]< \infty
    \end{aligned}
    \ee
    by (\ref{s:alphasum}) provided that   $\Ex[(\sum_{\xi \in G} \alpha_{\xi}(0))^2]<\infty.$
    Hence, the expression in (\ref{s:betaB_3bound}) is finite as well.

    For the fourth term we use that $x \exp(-x) \leq C$ for $x\geq 0$ and obtain
    \be{s:B_4bound}
    \begin{aligned}
        |B_4(s,t)|
        &:= \bigg| \gamma \sum_{\xi \in G} \alpha_{\xi}(s)\Big( K -\frac{1}{2} \alpha_{\xi}(s)\Big) \exp\bigg(-\sum_{\zeta\in G}\alpha_\zeta(s) \bar{x}_{\zeta}(t) \bigg) \big(-\bar{x}_{\xi}(t)\big) x(t)^{\kappa(s)}\bigg| \\
        &\le \gamma K \sum_{\xi \in G} \alpha_{\xi}(s)  \bar{x}_{\xi}(t)  \exp\bigg(-\sum_{\zeta\in G}\alpha_\zeta(s) \bar{x}_{\zeta}(t) \bigg)x(t)^{\kappa(s)}\\
        & \phantom{AAA}+  \frac{1}{2} \gamma \sum_{\xi \in G} \alpha_{\xi}(s)^2 \bar{x}_{\xi}(t) x(t)^{\kappa(s)}\\
        &\leq C \bigg(x(t)^{\kappa(s)}+ \sum_{\xi \in G} \alpha_{\xi}(s)^2 \bar{x}_{\xi}(t) x(t)^{\kappa(s)}\bigg)
    \end{aligned}
    \ee
    Therefore, by (\ref{s:betabound0}) and (\ref{s:B_4bound})
    \be{s:betaB_4bound}
    \begin{aligned}
        & \sup_{r,s,t\leq T} \Ex\Big[ \Big(\big|\beta\big(\alpha(r), \kappa(r)\big)\big|+1\Big)\big|B_4(s,t)\big|\Big]\\
        &\leq C \sup_{r,s,t\leq T} \Ex\bigg[\Big(1+ \sum_{\zeta \in G} \alpha_{\zeta}(r) \Big)\cdot \Big(x(t)^{\kappa(s) }+   \sum_{\xi \in G} \alpha_{\xi}(s)^2 \bar{x}_{\xi}(t) x(t)^{\kappa(s)}\Big)\bigg]\\
        &\leq C\Bigg(1+ \sup_{r,s,t\leq T} \sum_{\xi \in G} \Ex\bigg[ \Ex\Big[ \Big(1+ \sum_{\zeta \in G} \alpha_{\zeta}(r) \Big) \alpha_{\xi}(s)^2  \Big| {\cal F}^{\kappa}_T \Big] \cdot  \Ex\Big[ \bar{x}_{\xi}(t) x(t)^{\kappa(s)} \Big| {\cal F}^{\kappa}_T \Big]\bigg]\Bigg) \\
        &\leq C\Bigg(1+ \sup_{r,s\leq T}  \Ex\bigg[ \Ex\Big[  \sum_{\xi \in G}\Big(1+ \sum_{\zeta \in G} \alpha_{\zeta}(r) \Big) \alpha_{\xi}(s)^2  \Big| {\cal F}^{\kappa}_T \Big] \cdot \Big( 1+\bar{A}_{\kappa}(T)\Big)\bigg] \Bigg)\\
        &\leq C\Big(1+ 2 \sup_{s\leq T}  \Ex\Big[  \bar{\alpha}(s)^2 + \bar{\alpha}(s)^3 \Big] \cdot \Ex\big[ 1+\bar{A}_{\kappa}(T)\big] \Big)
    \end{aligned}
    \ee
    where we have used (\ref{s:x^kappabound}) and (\ref{s:alphasum}) in the second inequality, then (\ref{s:x^kappabound}) again in the third inequality. The last calculation is similar to that in (\ref{s:B_3alphabound}).
    Hence, the term in (\ref{s:betaB_4bound}) is finite provided that
    $ \Ex[ (\sum_{\xi \in G}\alpha_{\xi}(0))^3]< \infty.$

    For the fifth term we need to bound
    \be{s:B_5bound}
    \begin{aligned}
        |B_5(s,t)|
        &:= \Bigg| \gamma \lambda \sum_{\xi \in G} \alpha_{\xi}(s) \exp\bigg(-\sum_{\zeta \in G}\alpha_\zeta(s) \bar{x}_{\zeta}(t) \bigg) \big(\bar{x}_{\xi}(t)\big)^2 x(t)^{\kappa(s)}\Bigg|\\
        &\leq  C \sum_{\xi \in G} \alpha_{\xi}(s)\big(\bar{x}_{\xi}(t)\big)^2 x(t)^{\kappa(s)}
    \end{aligned}
    \ee
    Therefore, again  by (\ref{s:betabound0}),  (\ref{s:x^kappabound}), and (\ref{s:alphasum})
    \be{s:betaB_5bound}
    \begin{aligned}
        & \sup_{r,s,t\leq T} \Ex\Big[ \Big(\big|\beta\big(\alpha(r), \kappa(r)\big)\big|+1\Big) \big|B_5(s,t)\big| \Big]\\
        &\leq C \sup_{r,s,t\leq T} \Ex\bigg[ \Big(1+ \sum_{\zeta \in G} \alpha_{\zeta}(r) \Big)\cdot \Big(  \sum_{\xi \in G} \alpha_{\xi}(s)(\bar{x}_{\xi}(t))^2 x(t)^{\kappa(s)} \Big) \bigg]\\
        &= C \sup_{r,s,t\leq T} \Ex\Bigg[ \sum_{\xi \in G} \Ex\bigg[ \alpha_\xi(s) \bigg(1+ \sum_{\zeta \in G} \alpha_{\zeta}(r)\bigg)\bigg| {\cal F}^{\kappa}_T \bigg]  \cdot \Ex\bigg[ \bar{x}_{\xi}(t)^2 x(t)^{\kappa(s)} \Big| {\cal F}^{\kappa}_T \bigg] \Bigg]\\
        &\leq C\,\Ex\Bigg[ \sup_{r,s\leq T} \Ex\bigg[ \sum_{\xi \in G}\alpha_\xi(s) \bigg(1+ \sum_{\zeta \in G} \alpha_{\zeta}(r)\bigg) \bigg| {\cal F}^{\kappa}_T \bigg] \cdot \bigg(1+\bar{A}_{\kappa}(T)\bigg) \Bigg]\\
        &\leq C \sup_{s\leq T} \Ex\Big[ \bar{\alpha}(s) + \bar{\alpha}(s)^2\Big] \cdot  \Ex\Big[1+\bar{A}_{\kappa}(T)\Big]<\infty.
    \end{aligned}
    \ee
    For the sixth term we need to bound
    \be{s:B_6bound}
    \begin{aligned}
        |B_6(s,t)|
        &:= \Bigg|  \gamma \lambda \sum_{\xi \in G} \bar{\kappa}_{\xi}(s) \exp\bigg(-\sum_{\zeta \in G}\alpha_\zeta(s) \bar{x}_{\zeta}(t) \bigg) \big(-\bar{x}_{\xi}(t)\big) x(t)^{\kappa(s)}\Bigg|\\
        &\leq  C \sum_{\xi \in G} \bar{\kappa}_{\xi}(s) \bar{x}_{\xi}(t) x(t)^{\kappa(s)}
    \end{aligned}
    \ee
    Hence,  by  (\ref{s:betabound0}), (\ref{s:x^kappabound}) and (\ref{s:alphasum})
    we obtain  that
    \be{s:betaB_6bound}
    \begin{aligned}
        & \sup_{r,s,t\leq T} \Ex\Big[ \Big(\big|\beta\big(\alpha(r), \kappa(r)\big)\big|+1\Big)\big|B_6(s,t)\big|\Big]\\
        &\leq C \sup_{r,s,t\leq T} \Ex\bigg[\bigg(1+ \sum_{\zeta \in G} \alpha_{\zeta}(r) \bigg) \cdot \bigg(\sum_{\xi \in G} \bar{\kappa}_{\xi}(s)  \bar{x}_{\xi}(t) x(t)^{\kappa(s)} \bigg)\bigg]\\
        &= C \sup_{r,s,t\leq T} \Ex\Bigg[ \sum_{\xi \in G} \bar{\kappa}_{\xi}(s) \Ex\bigg[ 1+ \sum_{\zeta \in G} \alpha_{\zeta}(r) \bigg | {\cal F}^{\kappa}_T \bigg] \cdot \Ex\bigg[ \bar{x}_{\xi}(t) x(t)^{\kappa(s)} \bigg| {\cal F}^{\kappa}_T \bigg] \Bigg]\\
        &\le C \sup_{s\leq T} \Ex\bigg[ \sum_{\xi \in G} \bar{\kappa}_{\xi}(s) \sup_{r\leq T}  \Ex\Big[ 1+ \bar{\alpha}(r) \Big]  \cdot \Big(1+\bar{A}_{\kappa}(T)\Big) \bigg]\\
        &\leq Cn \sup_{r\leq T} \Ex\Big[  1+ \bar{\alpha}(r)\Big] \cdot \Ex\Big[1+\bar{A}_{\kappa}(T)\Big]< \infty.
    \end{aligned}
    \ee
    This completes the proof of (\ref{s:dualcond2}) and so establishes the duality.
\end{proof}

\bigskip

\noindent
{\em Acknowledgment. }We acknowledge many helpful discussions with Peter Pfaffelhuber, and
support by DFG within SFB/TR 12.

\bibliographystyle{alpha}
\bibliography{gpswz}

\begin{thebibliography}{ABBP02}

\bibitem[ABBP02]{AtBarBassPer02}
S.~Athreya, M.~Barlow, R.~Bass, and E.~Perkins.
\newblock Degenerate stochastic differential equations and super-{M}arkov
  chains.
\newblock {\em Probability Theory and Related Fields}, 123:484--520, 2002.

\bibitem[ABP05]{AtBassPer05}
S.~Athreya, R.~Bass, and E.~Perkins.
\newblock {H}\"older norm estimates for elliptic operators on finite and
  infinite dimensional spaces.
\newblock {\em Trans. Amer. Math. Soc.}, 357:5001--5029, 2005.

\bibitem[BEM07]{BlathEtheridgeMeredith2007}
J.~Blath, A.~Etheridge, and M.~Meredith.
\newblock Coexistence in locally regulated competing populations and survival
  of branching annihilating random walk.
\newblock {\em Annals Appl. Probab.}, 17(5/6):1474--1507, 2007.

\bibitem[BP97]{BolkerPacala1997}
B.M. Bolker and S.W. Pacala.
\newblock Using moment equations to understand stochastically driven spatial
  pattern formation in ecological systems.
\newblock {\em Theoretical Population Biology}, 52:179--197, 1997.

\bibitem[BP99]{BolkerPacala1999}
B.M. Bolker and S.W. Pacala.
\newblock Spatial moment equations for plant competition: {U}nderstanding
  spatial strategies and the advantages of short dispersal.
\newblock {\em American naturalist}, 153:575--602, 1999.

\bibitem[BZ07]{BirknerZaehle2007}
M.~Birkner and I.~Z{\"a}hle.
\newblock A functional {CLT} for the occupation time of state-dependent
  branching random walk.
\newblock {\em Ann. Probab.}, 35(6):2063--2090, 2007.

\bibitem[CG94]{CoxGreven1994a}
J.~T. Cox and A.~Greven.
\newblock Ergodic theorems for infinite systems of locally interacting
  diffusions.
\newblock {\em Ann. Probab.}, 22(2):833--853, 1994.

\bibitem[CP05]{CoxPerkins2005}
J.~T. Cox and E.A. Perkins.
\newblock Rescaled {L}otka-{V}olterra models converge to super-{B}rownian
  motion.
\newblock {\em Annals Probab.}, 33(3):904--947, 2005.

\bibitem[EK86]{EthierKurtz1986}
S.~N. Ethier and T.~G. Kurtz.
\newblock {\em Markov processes, characterization and convergence}.
\newblock John Wiley \& Sons Inc., New York, 1986.

\bibitem[Eth04]{Etheridge2004}
A.M. Etheridge.
\newblock Survival and {E}xtinction in a locally regulated population.
\newblock {\em Ann. Appl. Probab.}, 14(1):188--214, 2004.

\bibitem[HW07]{HW07}
M.~Hutzenthaler and A.~Wakolbinger.
\newblock Ergodic behaviour of locally regulated branching populations.
\newblock {\em Ann. Appl. Probab.}, 17(2), 2007.

\bibitem[JK02]{JagersKlebaner2000}
P.~Jagers and F.~C. Klebaner.
\newblock Population-size-dependent and age-dependent.
\newblock {\em Stoch. Proc. Appl.}, 87:235--254, 2002.

\bibitem[LS81]{LiggettSpitzer1981}
T.M. Liggett and F.~Spitzer.
\newblock Ergodic theorems for coupled random walks and other systems with
  locally interacting components.
\newblock {\em Z. Wahrsch. verw. Gebiete}, 56:443--468, 1981.

\bibitem[MT94]{MuellerTribe1994}
C.~M\"uller and R.~Tribe.
\newblock A phase transition for a stochastic pde related to the contact
  process.
\newblock {\em Probab. Theory Rel. Fields}, 100:131--156, 1994.

\bibitem[NP99]{NeuhauserPacala1999}
C.~Neuhauser and S.W. Pacala.
\newblock An explicitely spatial version of the {L}otka-{V}olterra model with
  interspecific competition.
\newblock {\em Annals of applied probability}, 9(4):1226--1259, 1999.

\bibitem[SS80]{ShigaShimizu1980}
T.~Shiga and A.~Shimizu.
\newblock Infinite dimensional stochastic differential equations and their
  applications.
\newblock {\em J. Math. Kyoto Univ.}, 20-3:395--416, 1980.

\bibitem[Stu03]{Sturm03}
A.~Sturm.
\newblock On convergence of population processes in random environments to the
  stochastic heat equation with colored noise.
\newblock {\em Electronic Journal of Probability}, 8(6):1--39, 2003.

\bibitem[YW71]{YamadaWatanabe1971}
T.~Yamada and S.~Watanabe.
\newblock On the uniqueness of solutions of stochastic differential equations.
\newblock {\em J. Math. Kyoto Univ.}, 11:155--167, 1971.

\end{thebibliography}

\vspace{1cm}
{\setlength{\parindent}{0mm}
\footnotesize\sc
\begin{minipage}[t]{7.5cm}
Andreas Greven\\ Mathematisches Institut\\ Universit\"at Erlangen--N\"urnberg\\ Cauerstr. 11
\\
91058 Erlangen \\ Germany \\ E-Mail: {\rm greven@mi.uni-erlangen.de}
\end{minipage}
\hfill
\begin{minipage}[t]{7.5cm}
%
%
%
Anja Sturm \\
Institut für Mathematische Stochastik\\
Georg-August-Universit\"at G\"ottingen\\
Goldschmidtstra{\ss}e 7\\
37077 G\"ottingen\\ Germany\\
E-Mail: {\rm asturm@math.uni-goettingen.de }
\end{minipage}
\hfill

\vspace{1cm}
\begin{minipage}[t]{7.5cm}
Anita Winter\\ Fakult\"at f\"ur Mathematik \\
Universit\"at Duisburg-Essen \\
Thea-Leymann-Stra{\ss}e 9 \\
45127 Essen
 \\
Germany\\
E-Mail: {\rm anita.winter@uni-due.de}
\end{minipage}

\vspace{1cm}
}

\end{document}


{\bf Appendix}\\

09.02.2010:

{\em Uniqueness} follows from an application of Dawson's Girsanov
transform for measure-valued processes (see, \cite{Dawson1978} or Theorem~10.1.2.4
in \cite{D93}).
Notice first that it follows from Theorem~4.1 in \cite{ShigaShimizu1980}) that there exists a unique solution
$Q_{{0}}\in{\mathcal M}_1(\R_+^{\{1,...,M\}\times G})$ of the martingale problem corresponding to $M$ independent super heat flows  with initial state $x\in\R_+^{\{1,...,M\}\times G}$, i.e., $Q_{{0}}(x_0=x)=1$ and
\be{mart}
   M_{(m,\xi)}^{{0}}(t)
 :=
   x^m_\xi(t)-x^m_\xi(t)-\int^t_0\sum_{\eta\in G}\big(a(\xi,\eta)-\delta(\xi,\eta)\big)x_\eta^m(s) \d s
\ee
defines a family of independent martingales w.r.t.\ $Q_0$ with covariance
\begin{equation}
\label{covriance}
    \langle M^{{0}}_{(m,\xi)},M^{{0}}_{(n,\eta)}\rangle_t
 := \int_0^t
   \delta_{m,n}\delta_{\xi,\eta}\gamma^m x^m_\xi(s) \d s.
\end{equation}

Define the measurable map  $b:\R_+^{\{1,2,...,M\}\times G}\times(\{1,2,...,M\}\times G)\to\R$ by
\begin{equation}
\label{mumeas}
\begin{aligned}
   b(x,(m,\xi))
 &:=
   \Gamma^m(x_\xi).
\end{aligned}
\end{equation}

Since \xxx{ In what sense would this be true. Wouldn't the sum over $m,\xi$ be potentially infinite?; maybe restricting from $G$ to $G_N$}
\begin{equation}
\label{Girsanov}
\begin{aligned}
   &\sum_{m=1}^M\gamma^m\sum_{\xi\in G}\int^t_0 \big(\Gamma^m(x_\xi(s))\big)^2x^m_\xi(s)\d s
 < \infty,\quad\forall\,t>0,
\end{aligned}
\end{equation}
we may conclude from the Girsanov theorem for measure-valued processes that if $Q_b$ is the solution of the martingale problem such that
\be{martb}
\begin{aligned}
   &M_{(m,\xi)}^{{b}}(t)
  \\
 &:=
   x^m_\xi(t)-x^m_\xi(0)-\int^t_0\sum_{\eta\in G}\big(a(\xi,\eta)-\delta(\xi,\eta)\big)x_\eta^m(s) \d s
   -\int^t_0 b\big(x(s),(m,\xi)\big)\gamma^m x^m_\xi(s) \d s
\end{aligned}
\ee
defines a family of independent martingales w.r.t.\ $Q_b$ with covariance
\begin{equation}
\label{covrianceb}
   \langle M^{{b}}_{(m,\xi)},M^{{b}}_{(n,\eta)}\rangle_t
 :=
   \int_0^t \delta_{m,n}\delta_{\xi,\eta}\gamma^m x^m_\xi(s)  \d s
\end{equation}
then $Q_b$ is absolutely continuous with respect to $Q_0$ and
\begin{equation}\label{nikodym}
\begin{aligned}
   &\frac{\d Q_b}{\d Q_0}\big|_{{\mathcal F}_t}=Z(t) \text{ where }
  \\
 &Z(t):=
   \exp\Big\{\sum_{m=1}^M\sum_{\xi\in G}\big(\int^t_0\Gamma^m(x_\xi(s))M^0_{(m,\xi)}(\d s)-\frac{1}{2}\gamma^m\int^t_0 x^m_\xi(s) \Gamma^m(x_\xi(s))^2 \d s\Big\}.
\end{aligned}
\ee
Notice that $M(t)=\sum_{m=1}^M\sum_{\xi\in G} \int^t_0\Gamma^m(x_\xi(s))M^0_{(m,\xi)}(\d s)$ is a continuous martingale with increasing process $A(t)=\sum_{m=1}^M\sum_{\xi\in G}\gamma^m\int^t_0 x^m_\xi(s) \Gamma^m(x_\xi(s))^2 \d s$ which implies that $Z(t)$ is a continuous local martingale, see for example Revuz and Yor 1990 \cite{RV90}  page 309.

In particular, $Q_b$ is uniquely determined. \sm This finishes the proof of Theorem \ref{T0}. We conclude the section with the\\

\begin{proof}{\bf of Proposition \ref{propn:l^pbound}}\,
Let $X$ be a solution to (\ref{P:eq.007}) with respect to the family of independent Brownian motions $\{w_{\xi}^m; \xi\in G, m=1, \dots, M\}$ and initial condition $x(0) \in l^{p}(\rho)$ for $p \geq 2.$ Let $\tilde{X}$ be a solution to (\ref{P:eq.007}) with $\lambda_{m,n}=0$ for all $m,n=1, \dots, M$ and with respect to the same family of independent Brownian motions $\{w_{\xi}^m; \xi\in G, m=1, \dots, M\}$ and the same initial condition $x(0).$ The process $\tilde{X}$ is a coupled supercritical branching diffusion. We are able to find such a coupled $\tilde{X}$ since there exists a strong solution to the supercritical branching diffusion for any $x(0) \in l^{p}(\rho)$ with $p \geq 2.$ This follows from Theorem 2.3 (see also Corollary 2.4) of \cite{Sturm03} where existence and pathwise uniqueness is shown in this case. In turn, pathwise
uniqueness  implies the existence of a strong solution, see for example \cite{Kurtz07} for a proof of this fact in a general setting that includes the present case.
From Theorem~2.3 of \cite{Sturm03} we also have the bound
\begin{equation}
\label{l^pboundtilde}
E( \sup_{0 \leq t \leq T} ||\tilde{x}(t)||_{p,\rho}) < \infty.
\end{equation}
Intuitively, the process $\tilde{X}$ dominates $X$ so that (\ref{l^pboundtilde}) implies (\ref{l^pbound})
and therefore the result. We make this intuition rigorous by applying Proposition \ref{monotonicity}
with $X^1=X$ and $X^2=\tilde{X}$ which implies that
\begin{equation*}
f^{m (1)}_{\xi}(x)= \sum_{\eta \in G} a(\xi,\eta)\big(x^m_\eta-x^m_\xi \big)
  + \gamma^m x^m_\xi (K^m -\sum_{n=1}^M  \lambda_{m,n} x_\xi^n(t) )
  \end{equation*}
  as well as
  \begin{eqnarray*}
\tilde{f}^{m (2)}_{\xi}(x)= \sum_{\eta \in G} a(\xi,\eta)\big(x^m_\eta-x^m_\xi \big)
  + \gamma^m x^m_\xi K^m.
  \end{eqnarray*}
  Therefore, we obtain
  \begin{eqnarray*}
& &\sum_{\xi \in G}  \sum_{m=1}^M \Big( f^{m (1)}_{\xi}(x^{(1)}) - f^{m(2)}_{\xi}( x^{(2)} ) \Big)
1_{\big\{ x^{m (1)}_\xi-x^{m (2)}_\xi>0\big\} }\rho(\xi)\\
\nonumber
&=& \sum_{\xi \in G}  \sum_{m=1}^M \big( \sum_{\eta \in G} a(\xi,\eta)\big((x^{m (1)}_\eta - x^{m (2)}_\eta)-(x^{m (1)}_{\xi} - x^{m (2)}_{\xi}) \big)+  \gamma^m K^m (x^{m (1)}_{\xi} - x^{m (2)})\\
& &\phantom{ \big( \sum_{\eta \in G} a(\xi,\eta)\big((x^{m (1)}_\eta - x^{m (2)}_\eta}
-  \gamma^m x^{m (1)}_\xi\sum_{n=1}^M  \lambda_{m,n} x_\xi^{n (1)}(t)\big)1_{\big\{ x^{m (1)}_\xi-x^{m (2)}_\xi>0\big\} }\rho(\xi)\\
\end{eqnarray*}
\begin{eqnarray*}
&\leq& \sum_{\eta \in G} \sum_{m=1}^M ( \sum_{\xi \in G}  a(\xi,\eta) \rho(\xi)) (x^{m (1)}_\eta - x^{m (2)}_\eta)+ \sum_{\xi \in G}  \sum_{m=1}^M  \gamma^m K^m (x^{m (1)}_{\xi} - x^{m (2)}) \rho(\xi)\\
&\leq& c  \sum_{\xi \in G}  \sum_{m=1}^M
\big(x^{m (1)}_\xi-x^{m (2)}_\xi\big)1_{\big\{ x^{m (1)}_\xi-x^{m (2)}_\xi>0\big\} }\rho(\xi).
\end{eqnarray*}
Here, we have used that the solutions are nonegative in the first inequality as well as the fact
that $\sum_{\xi \in G}  a(\xi,\eta) \rho(\xi)\leq C \rho(\eta),$ see (\ref{Gr3}), in the second inequality.
This shows (\ref{compassump}) of Proposition \ref{monotonicity} and finishes the proof due to the continuity $x^{m (1)}_\xi$ and $x^{m (2)}_\xi.$
\end{proof}


\subsection{Proof of Propositions \ref{PMom}, \ref{PMom-exch} and \ref{monotonicity}}\label{P:moments}

\begin{proof}{\bf of Proposition \ref{PMom}}\;
We choose $(\theta^m,c^m)_{m=1,\dots,M}$ in such a way that
\be{eq.suchthat}
  \theta^m-c^m a_m> K^m a_m - \lambda_{m,m} a_m^2,
\ee
for any $(a_{m})_{m=1,\dots,M}\in \R_{+}^M$ and all $m\in\{1,...,M\}$.
    Hence
    \be{S:eq.0001}
        \theta^m-c^m a_m> a_m \left(K^m-\sum_{n=1}^M \lambda_{m,n} a_n\right),\quad m=1,\dots,M.
    \ee
    We now calculate with It\^o's Formula and estimate by (\ref{S:eq.0001}) for each $m \in \{1,\dots, M\},$
    \be{Z038}
    \begin{aligned}
        \big(x^m_\xi(t)\big)^n
        &= \big(x^m_\xi(0)\big)^n + \int_0^t \sum_{\eta\in G}a(\xi,\eta)  \big(x^m_{\eta}(s)-x^m_{\xi}(s)\big) n \big(x^m_{\xi}(s)\big)^{n-1} \d s\\
        &  \quad+ \gamma^m \int_0^t x^m_\xi(s)\bigg(K^m-\sum_{k=1}^M \lambda_{m,k}x^k_\xi(s)\bigg) n \big(x^m_\xi(s)\big)^{n-1}  \d s \\
        &  \quad+ \frac{\gamma^m}{2} n(n-1) \int_0^t \big(x^m_\xi(s)\big)^{n-1} \d s +   \int_0^t \sqrt{\gamma^m x^m_{\xi}(s)}\, n  \big(x^m_{\xi}(s)\big)^{n-1} \d w_\xi(s) \\
        &\le \big(x^m_\xi(0)\big)^n + \int_0^t \sum_{\eta\in G}a(\xi,\eta)  \big(x^m_{\eta}(s)-x^m_{\xi}(s)\big) n \big(x^m_{\xi}(s)\big)^{n-1} \d s\\
        &  \quad + \gamma^m \int_0^t \big(\theta^m- c^m x^m_\xi(s)\big) n \big(x^m_\xi(s)\big)^{n-1}  \d s \\
        &  \quad+ \frac{\gamma^m}{2} n(n-1) \int_0^t \big(x^m_\xi(s)\big)^{n-1} \d s \\
        &  \quad + \int_0^t \sqrt{\gamma^m x^m_{\xi}(s)}\, n  \big(x^m_{\xi}(s)\big)^{n-1} \d w_\xi(s).
    \end{aligned}
    \ee
    Due to translation invariance the distribution of $x^m_{\xi}(t)$ is identical to the distribution of $x^m_{\eta}(t)$ for any $\eta \in G.$ An application of H\"older's inequality implies therefore that
    $\Ex\big[x^m_\eta(t) \big(x^m_{\xi}(t)\big)^{n-1}\big]\leq \Ex\big[\big(x^m_{\xi}(t)\big)^{n}\big].$
    Hence,
    \be{Z039}
    \begin{aligned}
        \Ex\big[\big(x^m_\xi(t)\big)^n\big]
        &\leq \Ex\big[\big(x^m_\xi(0)\big)^n\big] + \int_0^t \Ex\Big[ \gamma^m \big(\theta^m- c^m x^m_\xi(s)\big)  n \big(x^m_\xi(s)\big)^{n-1}\Big] \d s \\
        & \quad+ \frac{\gamma^m}{2} n(n-1) \int_0^t \Ex\Big[ \big(x^m_\xi(s)\big)^{n-1}\Big] \d s
    \end{aligned}
    \ee
    and due to the positivity we obtain
    \be{Z040}
        \Ex\left[\big(x^m_\xi(t)\big)^n\right]  \leq \Ex\left[\big(x^m_\xi(0)\big)^n\right] + \gamma^m \bigg(n \theta^m + \binom{n}{2}\bigg)\int_{0}^{t}\Ex\left[\big(x^m_\xi(s)\big)^{n-1}\right] \d s.
    \ee
    Hence, again by translation invariance and the moment conditions at time $t=0,$ we obtain that for
    any $T>0,\; n \in \N$
    \be{indstepmoments}
        \sup_{\xi \in G} \sup_{0\leq t \leq T}  \Ex\left[\big(x^m_\xi(t)\big)^{n-1}\right]< \infty
        \quad \Rightarrow \quad \sup_{\xi \in G} \sup_{0\leq t \leq T}  \Ex\left[\big(x^m_\xi(t)\big)^{n}\right]< \infty
    \ee
    provided that $\Ex\left[(x^m_\xi(0))^{n}\right]< \infty.$
    But  since
    \be{Z041}
        \frac{\d}{\d t}\Ex\left[x^m_\xi(t)\right]
        = \gamma^m \Ex\left[x_\xi^m(t)\left(K^m-\sum_{k=1}^M \lambda_{m,k}x^k_\xi(t)\right)\right] < \gamma^m \left(\theta^m -  c^m \Ex[x^m_\xi(t)]\right),
    \ee
    we have $\Ex\big[x^m_\xi(t)\big]<u(t)$ where $u(t)$ is the solution to
    \be{Z042}
        \frac{\d}{\d t}u(t)
        = \gamma^m \big(\theta^m -  c^m u(t)\big),\quad u(0)=\Ex\big[x^m_\xi(0)\big],
    \ee
    which is
    \be{Z043}
        u(t) = \frac{\theta^m}{c^m} + \left(\Ex[x^m_\xi(0)] - \frac{\theta^m}{c^m} \right) e^{-\gamma^m c^m t}.
    \ee
    Hence
    \be{Z044}
        \Ex\big[x^m_\xi(t)\big]<\frac{\theta^m}{c^m} + \left(\Ex\big[x^m_\xi(0)\big] - \frac{\theta^m}{c^m} \right) e^{-\gamma^m c^m t}.
    \ee
    It now follows from (\ref{indstepmoments}) by induction on $n$ that
    \be{supE}
        \sup_{\xi \in G} \sup_{0\leq t \leq T}  \Ex\big[\big(x_\xi^m(t)\big)^{n}\big]< \infty
    \ee
    if $ \sup_{\xi \in G} \Ex[\bar{x}_\xi(0)]< \infty.$

    In order to improve (\ref{supE}) we observe that by
    Jensen's inequality, for some constant $c(n, T),$ and all
    $0 \leq t \leq T,$
    \be{Z045}
    \begin{aligned}
        \big(x^m_\xi(t)\big)^n
        &\leq c(n, T) \Bigg(  \big(x^m_\xi(0)\big)^n + \int_0^t \sum_{\eta\in G} a(\xi,\eta) \big|x^m_{\eta}(s)-x^m_{\xi}(s)\big|^n \d s\\
        & \phantom{AAAAAA} + \int_0^t \gamma^m \big|\theta^m- c^m x^m_\xi(s)\big|^n   \d s + \left|\int_0^t  \sqrt{\gamma^m x^m_{\xi}(s)} \, \d w_\xi(s) \right|^n \Bigg),
    \end{aligned}
    \ee
    By Burkholder's inequality, translation invariance and using the bound $|a-b|^n\leq (2a)^n + (2b)^n$ for $a,b\geq 0$ it now follows that
    \be{Z046}
    \begin{aligned}
        \sup_{\xi \in G}\Ex\bigg[  \sup_{0 \leq t \leq T} \big(x^m_\xi(t)\big)^n\bigg]
        &\leq c(n, T) \Bigg( \sup_{\xi \in G}  \Ex\big[ \big(x^m_\xi(0)\big)^n\big]  \\
        & \quad +\big(2^{n+1}+ (2 \gamma^m c_m)^n\big) \int_0^T \sup_{\xi \in G}\Ex\big[ \big(x^m_\xi(s)\big)^n\big]\,   \d s\\
        & \quad + (2 \gamma^m \theta^m)^n T+ \sup_{\xi \in G}\Ex\left[ \left(\int_0^T  \gamma^m x^m_{\xi}(s)\,  \d s \right)^{n} \right]^{\frac{1}{2}} \Bigg) .
    \end{aligned}
    \ee
    Combining this with (\ref{supE}) implies (\ref{GA1}).
    For the exponential moments use that an exponential moment exists if this is
    true for the non-spatial part. \xxx{Detail A.W.} The non-spatial part
    is by explicit calculation
    \be{nsp1}
    \Ex[e^{-\lambda X(t)}]
      = \left (\frac{2\gamma c}{-\gamma \lambda (e^{-ct}-1)+2c}\right )^{\frac{2\theta}{\gamma}}.
    \ee
\end{proof}

\begin{proof}{\bf of Proposition \ref{PMom-exch}}\;
    From Theorem 2 of \cite{HW07} we know that there exists a translation invariant maximal process $\bar{X}^{(\infty)}=(\bar{x}_{\xi}^{(\infty)}(t))_{\xi \in G, t >0},$ also a solution to
    (\ref{P:eq.007}) for $M=1$ such that
    for all $t >0$ and $\xi \in G,$ $\bar{x}_{\xi}(t)$ is stochastically smaller than $\bar{x}_{\xi}^{(\infty)}(t).$ In order to prove the first part of
    (\ref{S-exch1}) it therefore suffices to consider the process $\bar{X}^{(\infty)}$
    which decreases stochastically as $t \rightarrow \infty$ and which satisfies $\Ex[\bar{x}_{\xi}^{(\infty)}(t)]<\infty$ for any $t>0, \xi \in G,$ again by Theorem 2 of
    \cite{HW07}. Due to the translation invariance this implies (\ref{S-exch1}) immediately for $n=1.$ For $n\geq 1,$ we proceed by induction. We calculate with It\^o's formula,
    \be{Z053}
    \begin{aligned}
        \d (\bar{x}_\xi^{(\infty)}(t))^n
        &= \sum_{\eta\in G}a(\xi,\eta) \big(\bar{x}_{\eta}^{(\infty)}(t)-\bar{x}_{\xi}^{(\infty)}(t)\big) n \big(\bar{x}_{\xi}^{(\infty)}(t)\big)^{n-1} \d t\\
        & \quad + \gamma n \big(\bar{x}_\xi^{(\infty)}(t)\big)^{n} \big( K- \lambda   \bar{x}_{\xi}^{(\infty)}(t)\big)\,  \d t +\frac{\gamma}{2} n(n-1) \big(\bar{x}_\xi^{(\infty)}(t)\big)^{(n-1)}   \d t\\
        & \quad + \sqrt{\gamma \bar{x}_{\xi}^{(\infty)}(t)}\, n  \big(\bar{x}_{\xi}^{(\infty)}(t)\big)^{n-1} \d w_\xi(t).
    \end{aligned}
    \ee
    Due to translation invariance  the distribution of $\bar{x}_{\xi}^{(\infty)}(t)$ is identical to the distribution of
    $\bar{x}_{\eta}^{(\infty)}(t)$ for any $\eta \in G.$ As before, an application of H\"older's inequality implies therefore that $\Ex\big[(\bar{x}_\eta^{(\infty)}(t) (\bar{x}_{\xi}^{(\infty)}(t))^{n-1}\big]\leq \Ex\big[(\bar{x}_{\xi}^{(\infty)}(t))^{n}\big].$
    Hence,
    \be{Z054}
    \begin{aligned}
        \d \Ex\big[(\bar{x}_\xi^{(\infty)}(t))^n\big]
        &\leq \gamma n K \Ex\big[(\bar{x}_\xi^{(\infty)}(t))^{n}\big]  \d t -\gamma n  \lambda \Ex\big[(\bar{x}_\xi^{(\infty)}(t))^{n+1}\big]   \d t\\
        & \quad  +\frac{\gamma}{2} n(n-1) \Ex\big[ (\bar{x}_\xi^{(\infty)}(t))^{(n-1)}\big]   \d t
    \end{aligned}
    \ee
    or, more precisely, for $0 \leq s \leq t<\infty,$
    \be{Z055}
    \begin{aligned}
        & \int_s^t \Ex\big[(\bar{x}_\xi^{(\infty)}(u))^{n+1}\big] \d u \\
        &\leq \frac{1}{\gamma n  \lambda} \left( \Ex\big[(\bar{x}_\xi^{(\infty)}(s))^n\big] - \Ex\big[(\bar{x}_\xi^{(\infty)}(t))^n\big]  \right) + \frac{K}{\lambda} \int_s^t \Ex\big[(\bar{x}_\xi^{(\infty)}(u))^{n}\big]  \d u \\
        & \quad +\frac{n-1}{2 \lambda} \int_s^t \Ex\big[ (\bar{x}_\xi^{(\infty)}(u))^{(n-1)}\big]   \d u
    \end{aligned}
    \ee
    Thus, if (\ref{S-exch1}) is true for $n$ and  $n-1$
    then the left hand side is finite.
    Using that $u \mapsto  (\bar{x}_\xi^{(\infty)}(u))^{n+1}$ is stochastically decreasing thus implying that
    $u \mapsto  \Ex[(\bar{x}_\xi^{(\infty)}(u))^{n+1}]$ is decreasing as well as the translation invariance of $X^{(\infty)}$ the first statement of (\ref{S-exch1}) now follows for $n+1.$ For the second statement of  (\ref{S-exch1})  we use that due to the positivity of the solutions and  H\"older's inequality there exists a constant $c=c(n,T)$ such that for
    $0 \leq s \leq T< \infty,$
    \be{Z056}
    \begin{aligned}
        & \Ex \Big[\sup_{s \leq t\leq T } (\bar x_\xi(t))^n\Big] \\
        &\leq c\Bigg( \Ex [(\bar x_\xi(s))^n] + 2^n  \sum_{\eta\in G} a(\xi,\eta) \int_s^T \Ex\big[(\bar{x}_\eta(t))^n\big] \d t \\
        & \quad + \big(2^n+(\gamma K)^n\big) \int_s^T \Ex\big[(\bar{x}_\xi(t))^n\big] \d t +   \gamma^{\frac{n}{2}} \bigg(\int_s^T \Ex[(\bar{x}_\xi(t))^{n}]\, \d t \bigg)^{\frac{1}{2}}\Bigg).
    \end{aligned}
    \ee
    where we have also used that by  Burkholder's inequality
    \be{Z057}
    \begin{aligned}
        \Ex\Bigg[\sup_{s \leq t\leq T } \bigg(\int_s^t \sqrt{\bar{x}^m_\xi(u)}\,  \d w_\xi(u) \bigg)^{n} \Bigg]
        &\leq  \Ex\Bigg[\sup_{s \leq t\leq T } \bigg(\int_s^t \sqrt{\bar{x}^m_\xi(u)}\, \d w_\xi(u)\bigg)^{2n} \Bigg]^{ \frac{1}{2} }\\
        &\leq  \Ex\Bigg[ \bigg(\int_s^T \bar{x}^m_\xi(t)\, \d t\bigg)^{n} \Bigg]^{ \frac{1}{2} }.
    \end{aligned}
    \ee
    This means that
    \be{Z058}
        \sup_{\xi \in G}\sup_{s \leq t \leq T} \Ex \big[\big(\bar x_\xi(t)\big)^{n}\big] < \infty \quad \text{ implies } \quad \sup_{\xi \in G} \Ex \bigg[\sup_{s \leq t\leq T } \big(\bar x_\xi(t)\big)^n\bigg] < \infty,
    \ee
    thus completing the proof of  (\ref{S-exch1}). In order to prove (\ref{S-exch2}) we first note that due to
    monotonicity in the initial condition (see Proposition \ref{monotonicity} and the following remark)
    it suffices to consider a process with translation invariant initial conditions.
    Thus (\ref{GA1}) follows from Proposition \ref{PMom}. Combining this fact with
    (\ref{S-exch1}) finishes the proof.
\end{proof}

\begin{proof}{\bf of Proposition \ref{monotonicity}}\:
 This result follows from an adaptation of fairly standard methods of Yamada and Watanabe
    \cite{YamadaWatanabe1971}, see also Shiga and Shimizu \cite{ShigaShimizu1980}.
    We want to show that  $X=X^{(1)} -X^{(2)}$ is not positive. Here, $X$ solves
    \begin{eqnarray}
    \nonumber
        \d x_{\xi}^m(t)
        = \Big( f^{m(1)}_{\xi}\big( x^{(1)}(t)\big) - f^{m (2)}_{\xi}\big( x^{(2)}(t)\big) \Big) \d t &+\bigg( \sqrt{\gamma^m x_{\xi}^{m (1)}(t)}-\sqrt{\gamma^m x_{\xi}^{m (2)}(t)} \bigg) \d w_{\xi}^m(t)\\
        \label{Z059}
         &+ \Big(y^{m (1)}_{\xi}(t) -y^{m (2)}_{\xi}(t) \Big) \d t
    \end{eqnarray}
    Let $g^{+}(x)=x \vee 0,\, x \in \R$ and let $g^{+, n}$ be an appropriate smoothing of $g^{+}$ as in \cite{YamadaWatanabe1971}. We can choose
    $g^{+, n}$ such that  $g^{+, n} \uparrow  g^{+}$ uniformly as $n \rightarrow \infty$ as well as
    $0\leq (g^{+, n})' \uparrow  1_{(0,\infty)}$ and
    $0 \leq (g^{+, n})''(x) \leq\frac{2}{n |x|}.$   We apply It\^o's formula to $x_{\xi}^m$ with the function $g^{+, n}$
    and consider the result stopped at time
    $ T_{N}= \inf\big\{t \geq 0 : \sum_{\xi \in G} \sum_{m=1}^M |x_{\xi}(t)| \rho(\xi) \geq N \big\}. $
    Using that  $g^{+,n}(x_{\xi}(0))= 0$ and $y^{m (1)}_{\xi}(t) -y^{m (2)}_{\xi}(t)\leq 0$ by assumption we get after    taking expections that
    \be{Z060}
    \begin{aligned}
        \Ex\Big[ g^{+,n}(x_{\xi}^m(t \wedge  T_{N})\Big]
        &\leq \Ex\Bigg[ \int_{0}^{t \wedge  T_{N}}  (g^{+,n})'(x_{\xi}^m(s))
        \Big( f^{m(1)}_{\xi}\big( x^{(1)}(s)\big) - f^{m (2)}_{\xi}\big( x^{(2)}(s) \big) \Big) \d s \Bigg]\\
        & \quad +\frac{1}{2} \Ex\Bigg[ \int_{0}^{t \wedge  T_{N}}  (g^{+,n})''(x_{\xi}^m(s))\Big( \sqrt{ \gamma^m x^{m (1)}_{\xi}(s)} -  \sqrt{ \gamma  x^{m (2)}_{\xi}(s) } \,\Big)^2 \d s \Bigg].
    \end{aligned}
    \ee
Letting $n \rightarrow \infty,$ the quadratic variation term vanishes and we obtain due to Lebesgue's dominated convergence theorem that
    \be{Z061}
        \Ex\Big[ g^{+}(x_{\xi}^m(t \wedge  T_{N})\Big] \leq \Ex\Bigg[ \int_{0}^{t \wedge  T_{N}} 1_{\{x_{\xi}^m(s)>0\}}
        \Big( f^{m (1)}_{\xi}\big( x^{(1)}(s)\big) - f^{m (2)}_{\xi}\big( x^{(2)}(s) \big) \Big) \d s \Bigg].
    \ee
    We multiply the above by $\rho(\xi)$ and sum over $\xi$ and $m.$
    Using (\ref{compassump}) we arrive at
    \be{Z062}
        \Ex\Bigg[ \sum_{\xi \in G} \sum_{m=1}^M g^{+}\big(x_{\xi}^m(t \wedge  T_{N}\big) \rho(\xi) \Bigg] \leq c \int_{0}^{t }\Ex\Bigg[ \sum_{\xi \in G} \sum_{m=1}^M g^{+}\big(x_{\xi}^m(s\wedge  T_{N})\big) \rho(\xi) \Bigg] \d s
    \ee
    An application of Gronwall's lemma implies now that
    $$\Ex\Big[ \sum_{\xi \in G} \sum_{m=1}^M g^{+}\big(x_{\xi}^m(t \wedge  T_{N})\big) \rho(\xi) \Big]=0.$$
    Since $T_{N} \uparrow \infty$ a.s. as $N \rightarrow \infty$ and application of Fatou's lemma proves our result.
\end{proof}


\subsection{Proof of the ergodic theorem for the process \ref{T.X}}
\label{proofTX}

The basic tool here is the ergodic theorem for the total mass process
obtained in \cite{HW07} together with a duality of the process of
relative frequencies of types with a time-inhomogeneous spatial coalescent
in randomly fluctuating medium. The medium is provided by the total mass process.
The results in \cite{HW07} given under our assumptions that the total
mass process $(\bar X(t))_{t \geq 0}$ satisfies:
\be{aG10}
\CL [\bar X(t)] \tto \nu,
\ee
where $\bar \nu$ is the (unique) upper invariant measure of the process
$(\bar X(t))_{t \geq 0}$. Here we use that the total mass process is a
one-type $(m-j)$ spatial logistic branching process.

\bi

Consider first the $(\Omega_{X^\varepsilon,0},{\mathcal L}_b)$ martingale problem
corresponding to the operator
\be{gen3b}
\begin{aligned}
    \Omega_{X^\varepsilon,0} f(y)
 &=
    \sum_{m=1}^M\sum_{\xi\in G} \frac{y_\xi^m}{\ve} \Bigg(\sum_{\eta\in G}\bar a(\xi,\eta) \Big(f(y+{\ve}e(m,\eta)-{\ve}e(m,\xi))-f(y)\Big) \\
    & \phantom{AAAAAAAAA} + \frac{\gamma^m}{2\varepsilon} \Big(1+i\big(\ve K^m\big)\Big) \Big(f(y+\vee(m,\xi))-f(y)\Big)\\
    & \phantom{AAAAAAAAA} +  \frac{\gamma^m}{2\varepsilon} \Big(1-i\big(\ve K^m\big)\Big) \Big(f(y-\vee(m,\xi))-f(y)\Big)\Bigg).
\end{aligned}
\ee

By Theorem~\ref{T:parti} 
the $(\Omega_{X^\varepsilon,0},{\mathcal L}_b)$ martingale problem has a solution
\be{A0}
   X^{\varepsilon,0}
 :=
   \bigg(\Big(\big(x^{\varepsilon,m}_\xi(t)\big)_{m\in\{1,\dots,M\}}\Big)_{\xi\in G}\bigg)_{t\ge 0},
\ee
which is stochastically which are the ($\ve$-rescaled) super-critically branching random walks. We claim that it is enough to show the containment condition for $X^{\varepsilon,0}$ rather than $X^{\varepsilon}$ then both particle systems
can be coupled in a standard way such that $X^{\varepsilon,0}$ is stochastically smaller than the other.
Indeed, such a coupling is obtained